\newtheorem{lemma}{Lemma}
\newtheorem{theorem}{Theorem}
\title{Semi-supervised learning in unmatched linear regression using an empirical likelihood approach}
\author{Fadoua Balabdaoui$^{\star}$ and Jinyu Chen$^{\star}$}
\date{$^\star$ Seminar for Statistics, D-MATH, ETH of Zurich \\ \today}
\begin{document}

\maketitle

\begin{abstract}
Knowing the link between observed predictive variables and outcomes is crucial for making inference in any regression model. When this link is missing, partially or completely, classical estimation methods fail in recovering the true regression function. Deconvolution approaches have been proposed and studied in detail in the unmatched setting where the predictive variables and responses are allowed to be independent. In this work, we consider linear regression in a semi-supervised learning setting where, beside a small sample of matched data, we have access to a relatively large unmatched sample. Using maximum likelihood estimation, we show that under some mild assumptions the semi-supervised learning empirical maximum likelihood estimator (SSLEMLE) is asymptotically normal and give explicitly its asymptotic covariance matrix as a function of the ratio of the matched/unmatched sample sizes and other parameters. Furthermore, we quantify the statistical gain achieved by having the additional large unmatched sample over having only the small matched sample. To illustrate the theory, we present the results of an extensive simulation study and apply our methodology to the \lq\lq combined cycle power plant\rq\rq \ data set.




\paragraph{Keywords:}  Likelihood, Regression, Semi-supervised learning, Statistical gain, Unmatched

\end{abstract}

\section{Background and contributions}

\subsection{An overview}
Consider the standard linear regression model 
\begin{align*}
    Y = \beta_0^\top X + \epsilon 
\end{align*}
with $X \in \mathbb R^p$ the predictive variable, $Y \in \mathbb R$ the response, $\beta_0$ the unknown regression vector and $\epsilon$ the unobserved noise. Based on observed pairs $\{(X_i, Y_i)\}_{i=1}^m$ of responses and their corresponding (matching) covariates estimation of $\beta_0$ can be done using the ordinary least squares estimator (OLSE):  
\[
\widehat{\beta}_{\text{OLS}} = \arg\min_{\beta\in\mathbb{R}^p} \frac{1}{m} \sum_{i=1}^m (Y_i - \beta^\top X_i)^2.
\]
The OLSE comes with many advantages; e.g. it is straightforward to compute and is asymptotically normal under classical regularity conditions. Also, it does not require any knowledge about the distribution of the noise, except for being centered and having a finite variance conditionally on the covariate.  Suppose now that knowledge about the noise distribution is available, for example through expertise. For simplicity of exposition, we further assume that this knowledge is complete and that the distribution has known density $f^\epsilon$ with respect to Lebesgue measure.  In this case, a more natural estimator of $\beta_0$ is the maximum likelihood estimator (MLE):
\[
\widehat{\beta}_{\text{MLE}} = \arg\min_{\beta\in\mathbb{R}^p} -\frac{1}{m}\sum_{i=1}^m \log{f^\epsilon(Y_i - \beta^\top X_i)}.
\]
When $\epsilon \sim \mathcal{N}(0,\sigma^2)$, the OLSE and MLE are equal  but the MLE  is more efficient when the distribution of $\epsilon$ deviates from Gaussianity; see \cite{FadouaandJustine}.  Having this in mind, the goal in this paper is not to explore efficiency of the MLE over the OLSE as done in \cite{FadouaandJustine} but to consider inference in a linear regression model based on a small matched sample in addition of a large unmatched sample. Therefore, we are in a semi-supervised learning setting where we aim to exploit the advantages of knowing the link between the responses and covariates in the matched sample and of the large size of the unmatched one. Before going into describing our approach, we would like to review the existing literature and the recent developments in unlinked regression.  In this setting, one only has access to independent samples of responses $\{\tilde Y_i\}_{i=1}^{n_Y}$ and covariates $\{\tilde X_i\}_{i=1}^{n_X}$, with possibly $n_X \neq n_Y$, such that 
$$
Y_j \stackrel{d}{=}  f_0(X_i)  +  \epsilon_{ij}       
$$
for $(i, j) \in \{1, \ldots, n_X \} \times \{1, \ldots, n_Y\}$. Here, $\epsilon_{ij}$ are i.i.d and independent of $X_i$ for all $(i, j) \in \{1, \ldots, n_X \} \times \{1, \ldots, n_Y\}$, and $f_0$ is an unknown regression function. When $f_0$ is a uni-variate monotone function, estimation based on deconvolution techniques has been studied in several papers; see \citep{carpentier2016learning, balabdaoui2021unlinked, rigollet2019uncoupled, meis} and the references therein. When $f_0(x) = \beta_0^\top x$; i.e., in the unlinked linear regression setting, \cite{monafadoua2024} constructed a deconvolution least squares estimator (DLSE) of $\beta_0$.   Under some regularity conditions satisfied by the distribution of noise and that of the covariate, and identifiability of $\beta_0$, the authors could establish consistency and asymptotic normality of the DLSE. A related but a more specific setting is that of the so-called shuffled or permuted regression: There, it is assumed that the responses and covariates were measured on the same subject but the link between them is not accessible because the covariates have been permuted; 
see \citep{hsu2017linear, pananjady2017linear, Unnikrishnan:2018gp, slawski2019linear, slawski2020two,  Tsakiris20, slawski2021pseudo, zhang2021benefits}. Note that the main focus in the aforementioned works is to recover the unknown permutation. 

It is important to mention that there is a whole stream of articles where authors use the term \lq\lq semi-supervised learning\rq\rq \ based on labeled and unlabeled data in a way that is very different from the one considered here. If labeled/matched means the same, unmatched/unlabeled in this work refers to responses $\tilde Y_i$ and covariates $ \tilde X_j$ which are unlinked (even in the scenario where they have been measured on the same individuals/items) but can be still put together in regression model. Of course, such a regression model is not a standard one: The responses and covariates can only be required to satisfy an equality in distribution.  More details are given below. Thus, in our semi-supervised learning, we use that the unmatched responses and covariates give  information about the regression vector even though this information is of a different nature from the one provided by the matched sample. In the common literature on semi-supervised settings, the unlabeled data consist only of the covariates or covariates and predictors which might be biased. In the first framework, we can refer for example to  \cite{ChakraborttyCai2018, Azriel02102022} in linear regression, and to \cite{Semi-supervised-inference-for-nonparametric-logistic-regression}, where logistic regression was considered. In the second one, we refer to the seminal work on prediction-powered prediction of \cite{Prediction-powered-inference}.  There, the authors consider the situation where one has access to a small golden matched/labeled sample of pairs responses and covariates  $\{(X_i, Y_i)\}_{i=1}^n$, a large unlabeled sample of covariates  $\{\tilde X_i\}_{i=1}^N$  ($N > > n$) and a prediction rule $f: \{\tilde X_i\}_{i=1}^N \mapsto \{\tilde Y_i\}_{i=1}^N$. Using the golden sample, the authors propose an approach of correcting the prediction error made by $f$ with the goal of constructing sharper and valid confidence intervals for a given parameter of interest.


\subsection{The SSL setting and contributions} 

As mentioned above, we will consider the setting where both a matched and an unmatched sample are available. More specifically, consider a pair $(X, Y) \in \mathbb R^p \times \mathbb R$  such that 
\begin{eqnarray*}
Y  =  \beta_0^\top X+ \epsilon
\end{eqnarray*}
where $\beta_0 \in \mathbb R^p$ is unknown and $\epsilon$ is independent of $X$ and has density $f^\epsilon$.  We want to make inference about $\beta_0$ based on 

\begin{itemize}

\item independent unmatched covariates and responses (for simplicity assumed to be of the same size), $\tilde{X}_i, i=1, \ldots, n$ and $\tilde{Y}_j, j =1, \ldots, n$,  such that 
\begin{eqnarray*}
\tilde X_1, \ldots, \tilde X_n \stackrel{i.i.d.}{\sim}  X, \  \ \text{and}  \ \ \tilde Y_1, \ldots, \tilde Y_n \stackrel{i.i.d.}{\sim}  Y,
\end{eqnarray*}

\item i.i.d matched pairs of covariates and responses of size $m$:  
$$( X_1, Y_1), \ldots, (X_m, Y_m)  \stackrel{i.i.d.}{\sim} (X, Y). $$
\end{itemize}
If only the unmatched sample is available, then one can use the DLSE introduced by \cite{monafadoua2024}: \begin{eqnarray*}
\widehat{\beta}_{\text{DLSE}} &=& \arg\min_{\beta\in\mathbb{R}^p} \int \Big(\mathbb F^{\tilde Y}_n(y)-\frac{1}{n}\sum_{i=1}^n F^\epsilon(y-\beta^\top \tilde X_i)\Big)^2  d\mathbb F^{\tilde Y}_n(y), \\
&& \ \ \  \textrm{where  $\mathbb F^{\tilde Y}_n$ is the empirical distribution of $\tilde Y_i, i =1, \ldots, n$} \\
& = &  \arg\min_{\beta\in\mathbb{R}^p} \frac{1}{n}\sum_{j=1}^n \Big( \frac{j}{n}-\frac{1}{n}\sum_{i=1}^n F^\epsilon(\tilde{Y}_{(j)}-\beta^\top \tilde X_i)\Big)^2, \\
&& \ \ \  \textrm{where  $\tilde Y_{(j)}$ are the order statistics of $\tilde Y_i, i =1, \ldots, n$}.
\end{eqnarray*}
In this work, we will take a different approach, namely that of maximizing a likelihood function obtained by combining both the matched and unmatched samples. More specifically, if we denote $f^\epsilon$ by $f$, our goal is to study the properties of any maximizer of
\begin{eqnarray}\label{pseudologlik}
\ell_{n, m}(\beta) &= &  \frac{1}{n+m}  \sum_{j=1}^n \log \left( \frac{1}{n} \sum_{i=1}^n f(\tilde{Y}_j - \beta^\top \tilde{X}_i)\right)  + \frac{1}{n+m}   \sum_{k=1}^m \log f(Y_k - \beta^\top X_k)  \notag \\
&& 
\end{eqnarray}
over $\beta \in \mathbb R^p$, under some specific assumptions.  Some remarks are in order.   The function $\ell_{n, m}$  should be seen an empirical log-likelihood since the distribution of the covariate $X$ is unknown and hence has to be estimated using the observations $\tilde X_j, j =1, \ldots, n$ of the unmatched sample. In other words, if the distribution function of $X$, $F^X$ say, were known, then we would maximize the \lq\lq true\rq\rq \ log-likelihood
\begin{eqnarray*}
\beta \mapsto  \frac{1}{n+m}  \sum_{j=1}^n \log \left( \int f(\tilde{Y}_j - \beta^\top x) dF^X(x)\right)  + \frac{1}{n+m}   \sum_{k=1}^m \log f(Y_k - \beta^\top X_k).   
\end{eqnarray*}
The second aspect to be mentioned is the fact that we can re-write  $\ell_{n, m}(\beta)$ as 
\begin{eqnarray*}
 w_{n, m} \ \frac{1}{n} \sum_{j=1}^n \log \left( \frac{1}{n} \sum_{i=1}^n f(\tilde{Y}_j - \beta^\top \tilde{X}_i)\right)  + (1-w_{n, m}) \ \frac{1}{m} \sum_{k=1}^m \log f(Y_k - \beta^\top X_k)     
\end{eqnarray*}
with $w_{n,m} = \frac{n}{n+m}$. Therefore, the contributions of the unmatched and matched samples have natural weights which, under the assumption that the ratio $m/n \to \lambda \in (0,1)$, as $ n, m \to \infty$, converge to the limiting weights $1/(1+ \lambda)$ and $\lambda/(1+\lambda)$ respectively. More details about the samples ratio will be given in the next section.  \\

\par \noindent Our main contributions in this work can be listed as follows. 
\begin{itemize}
\item We study existence of a maximizer of $\ell_{n, m}$ and show that the optimization problem admits at least a solution for any finite $n$ and $m$ or with probability 1 for $n$ and $m$ large enough under the condition that $\lim_{n, m \to \infty} m/n = \lambda \in (0,1)$. We refer to any such a maximizer as the Semi-Supervised Learning Empirical Maximum Likelihood Estimator (SSLEMLE).

\item Using the theory of empirical processes, we show that the SSLEMLE is consistent. Under specific assumptions, we prove that is asymptotically normal and give the expression of its asymptotic covariance matrix as a function of $\lambda$, the density of the noise, the density of the covariate vector, and that of the marginal density of the response variable.

\item We study the statistical gain achieved by adding the large unmatched sample to the small matched one. Defined as the square root of the ratio of the determinants of the asymptotic covariances of the SSLEMLE and the MLE based on the small matched sample, we are able to derive its explicit formula in the case where the covariate vector and noise are Gaussian.  Although the Gaussian distribution for the covariate violates one of our assumptions,  we were able to validate it through simulations. The formula was obtained using matrix diagonalization and involved algebra, and hence is one of the main highlights of this work.

\item We illustrate the theory through simulations in several scenarios with the goal of showing that the Monte Carlo estimation of the statistical gain is close to the true one or providing an empirical approximation thereof in cases where an explicit formula is hard to derive. Furthermore, we showcase our methodology by applying it to the \lq\lq combined cycle power plant \rq\rq \ data set used for predicting the net hourly electrical energy output.
\end{itemize}

\subsection{Outline of the paper}

The paper is organized as follows. In the next section, we establish existence and consistency of a SSLEMLE. In section 3 we prove asymptotic normality and give the explicit formula for the statistical gain when both the covariates and noise are Gaussian. Although Gaussianity of the covariate violates one of our assumptions, we believe that obtained formula is a very interesting result in its own right. Section 4 shows simulations on synthetic and real data. We provide the proofs of all lemmas and theorems in the Appendix.

\section{The semi-supervised learning empirical maximum likelihood estimator (SSLEMLE)}\label{SSLEMLE}

Let $\tilde{X}_i, i=1, \ldots, n$ and $\tilde{Y}_j, j =1, \ldots, n$ denote again the independent unmatched data. Let $(X_k, Y_k), k =1, \ldots, m$ be the matched pairs and $f$ the density of the noise distribution. The empirical likelihood of the aggregated data is given by
\begin{eqnarray*}
\ell_{n, m}(\beta) =  \frac{1}{n+m}  \sum_{j=1}^n \log \left( \frac{1}{n} \sum_{i=1}^n f(\tilde{Y}_j - \beta^\top \tilde{X}_i)\right)  + \frac{1}{n+m}   \sum_{k=1}^m \log f(Y_k - \beta^\top X_k)   
\end{eqnarray*}
for $\beta \in \mathbb R^p$. The first goal in this section is to show under some appropriate conditions that a SSLEMLE; i.e., a maximizer of $\ell_{n,m}$ over $\mathbb R^p$, exists.

\subsection{Existence}

\par \noindent In the sequel, we will use the following assumption: \\

\medskip

\par \noindent (A0) The density of the noise $\epsilon$ admits the form
\begin{eqnarray*}
f(t)  =  c_\alpha  \exp\left( - d^{-\alpha} \vert t \vert^\alpha \right), \ t  \in \mathbb R,    
\end{eqnarray*}
for some $\alpha \ge 1$, $c_\alpha > 0$ and $d > 0$.  For $\alpha = 1$, $d = \mu$ and $c_1 = 1/(2\mu)$ for some $\mu > 0$, $f$ is the density of a Laplace distribution with intensity $\mu$.  For $\alpha =2$, $c_2 = 1/\sqrt{2\pi \sigma^2}$ and $d = \sqrt 2  \sigma$ for some $ \sigma > 0$, $f$ is the density of a centered Gaussian with standard variation $\sigma$. To derive $c_\alpha$ as a function of $d_\alpha$, note that
\begin{eqnarray*}
c^{-1}_\alpha  & =  &  2 d \int_0^\infty \exp( - t^\alpha)  dt \\
& =  & \frac{2 d}{\alpha} \int_0^\infty u^{1/\alpha -1} \exp(- u)  du, \ \   \text{using the variable change $u = t^\alpha$} \\
& = & \frac{2 d \Gamma(1/\alpha)}{\alpha} 
\end{eqnarray*}
and hence
$$
c_\alpha  = \frac{\alpha}{2 d \Gamma(1/\alpha)}.
$$
In the following,  we will prove existence of a maximizer  when $n$ and $m$ are fixed and when they are let to increase to $\infty$. The arguments are based on showing that if $\Vert \beta \Vert$ is too large, then $\beta$ is not a good candidate for maximizing $\ell_{n,m}$.

\subsubsection{The case of finite \texorpdfstring{$m$}{m} and \texorpdfstring{$n$}{n}}

\begin{lemma} \label{Lemma: Existence of MLE for fixed sample case}
Suppose that (A0) holds. If the matched design matrix $M = \begin{pmatrix}
    X_1^\top \\
    \vdots\\
     X_m^\top \\
\end{pmatrix} \in \mathbb{R}^{m\times p}$ has $\operatorname{rank}(M) = p$, then the total empirical log-likelihood function $\ell_{n, m}(\beta)$ admits at least a maximizer. 
    \end{lemma}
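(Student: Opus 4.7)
The plan is to use a standard coercivity plus continuity argument: show that $\ell_{n,m}(\beta)$ is continuous in $\beta$ and that $\ell_{n,m}(\beta)\to -\infty$ as $\|\beta\|\to\infty$, so that the supremum is attained on some compact ball.

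Continuity is immediate because $f$ in (A0) is continuous and strictly positive, and $\ell_{n,m}$ is a finite sum of compositions of continuous functions (the logs are applied to strictly positive quantities). For the coercivity, I would split $\ell_{n,m}$ into its unmatched and matched pieces and treat them separately. For the unmatched piece, the key observation is that (A0) gives $f(t)\le c_\alpha$ for every $t\in\mathbb R$, so each inner average $\tfrac{1}{n}\sum_{i=1}^n f(\tilde Y_j-\beta^\top\tilde X_i)$ is bounded above by $c_\alpha$, and hence the entire unmatched sum is uniformly bounded above by $\tfrac{n}{n+m}\log c_\alpha$ for all $\beta\in\mathbb R^p$. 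So this term cannot help push $\ell_{n,m}$ upward at infinity.

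For the matched piece, I would plug in the explicit form of $\log f$ from (A0):
\begin{eqnarray*}
\sum_{k=1}^m \log f(Y_k-\beta^\top X_k) \;=\; m\log c_\alpha - d^{-\alpha}\sum_{k=1}^m |Y_k-\beta^\top X_k|^\alpha \;=\; m\log c_\alpha - d^{-\alpha}\,\|Y-M\beta\|_\alpha^\alpha,
\end{eqnarray*}
where $Y=(Y_1,\dots,Y_m)^\top$ and $\|\cdot\|_\alpha$ is the $\ell^\alpha$-norm on $\mathbb R^m$. The assumption $\operatorname{rank}(M)=p$ makes $\beta\mapsto M\beta$ injective, so $\|M\beta\|_2\to\infty$ as $\|\beta\|\to\infty$; by the triangle inequality $\|Y-M\beta\|_2\ge \|M\beta\|_2-\|Y\|_2\to\infty$, and since all norms on $\mathbb R^m$ are equivalent (and $\alpha\ge 1$), this forces $\|Y-M\beta\|_\alpha^\alpha\to\infty$. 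Thus the matched term tends to $-\infty$.

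Combining the two pieces, $\ell_{n,m}(\beta)\to -\infty$ as $\|\beta\|\to\infty$. Hence there exists $R>0$ such that $\ell_{n,m}(\beta)<\ell_{n,m}(0)$ for all $\|\beta\|>R$, and the continuous function $\ell_{n,m}$ attains its supremum on the compact ball $\{\|\beta\|\le R\}$. That global maximizer is a SSLEMLE. The only slightly delicate point is ensuring that the unmatched term cannot grow without bound along some direction, but this is ruled out at once by the uniform upper bound $f\le c_\alpha$; everything else is routine norm-equivalence on $\mathbb R^m$.
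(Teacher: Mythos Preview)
Your proof is correct and follows the same overall strategy as the paper: bound the unmatched part above by the constant $\tfrac{n}{n+m}\log c_\alpha$, then use the full-rank condition on $M$ to force the matched part to $-\infty$ as $\|\beta\|\to\infty$, and conclude by continuity on a compact ball.

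The only notable difference is in how the coercivity of the matched part is established. You invoke norm equivalence on $\mathbb R^m$ together with the reverse triangle inequality to get $\|Y-M\beta\|_\alpha^\alpha\to\infty$, which is clean and quick. The paper instead works more explicitly: it introduces $A^\ast=\inf_{u\in\mathcal S^{p-1}}\sum_{k=1}^m|u^\top X_k|^\alpha$, shows $A^\ast>0$ from the rank condition, uses the convexity inequality $|Y_k-\beta^\top X_k|^\alpha\ge 2^{1-\alpha}|\beta^\top X_k|^\alpha-|Y_k|^\alpha$, and derives a concrete radius
\[
R=\left(\frac{2^{\alpha-1}}{A^\ast}\sum_{j=1}^n|\tilde Y_j|^\alpha+\frac{2^\alpha}{A^\ast}\sum_{k=1}^m|Y_k|^\alpha\right)^{1/\alpha}
\]
beyond which $\ell_{n,m}(\beta)<\ell_{n,m}(0)$. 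Your argument is more elegant for the lemma in isolation; the paper's explicit $R$, however, is not incidental: it is the starting point for the asymptotic existence result (Lemma~\ref{Lemma: Existence of MLE for asymptotic sample case}), where one needs to control $R$ by a deterministic $R^\ast$ with probability one as $n,m\to\infty$. So the extra bookkeeping in the paper's proof is an investment that pays off in the next lemma.
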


\subsubsection{The asymptotic case: When \texorpdfstring{$n, m \to \infty$}{n, m -> infty}}

In this asymptotic case, we need the following additional assumptions.  

\begin{description}

\item  (A1) $\lim_{n, m \to \infty}  m/n = \lambda \in (0,1)$,  




\item (A2) For any vector $v \in \mathbb R^p \setminus \{0\}$, 
$$
\mathbb P(v^\top X = 0)  =0  
$$
\end{description}
Under the condition that the design matrix has rank $p$, the proof of Lemma \ref{Lemma: Existence of MLE for fixed sample case} implies that for fixed $n$ and $m$ we have that $-\ell_{n,m}(\beta) > -\ell_{n,m}(0)$ for all $\beta\in\{b\in\mathbb{R}^p:\left\lVert b \right\rVert > R\}$ with 
\begin{eqnarray}\label{R}
R = \left( \frac{2^{\alpha-1}}{A^\ast} \sum_{j=1}^n \left\lvert \Tilde{Y}_j \right\rvert^\alpha +\frac{2^{\alpha}}{A^\ast} \sum_{k=1}^m \left\lvert Y_k \right\rvert^\alpha \right)^{\frac{1}{\alpha}}
\end{eqnarray}
and $A^\ast = \inf_{u \in \mathcal{S}^{p-1}} \sum_{k=1}^m \left\lvert u^\top X_k\right\rvert^\alpha$, where $ \mathcal{S}^{p-1}$ is the $(p-1)$-dimensional unit sphere. The proof of the next lemma is mainly based on showing that $R$ can be bounded above by some $R^\ast$ with probability 1 for large enough $n$ and $m$.

\medskip

\begin{lemma}\label{Lemma: Existence of MLE for asymptotic sample case}
Suppose that (A0)-(A2) hold. Then,  with probability 1, there exists  $m^\ast$ such that for all $n,m \ge m^\ast$ a maximizer of $\beta \mapsto \ell_{n,m}(\beta)$ belongs to $\overline {\mathcal{B}} (0, R^\ast)$ where $R^\ast> 0$ is given in (\ref{R*}).  
\end{lemma}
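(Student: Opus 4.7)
The preamble to the lemma already states that, whenever the matched design matrix $M$ has rank $p$, every maximizer of $\ell_{n,m}$ lies in the closed ball $\overline{\mathcal{B}}(0,R)$ with the random radius $R$ from (\ref{R}). Consequently, it suffices to show, almost surely for $n,m$ large enough, (i) that $\operatorname{rank}(M)=p$, and (ii) that $R \le R^\ast$ for some deterministic constant $R^\ast$. Claim (i) is immediate from (A2): since $X$ assigns zero probability to every linear hyperplane through the origin, a straightforward induction shows that $m$ i.i.d.\ copies of $X$ span $\mathbb{R}^p$ almost surely as soon as $m \ge p$.

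The heart of the argument is (ii). The plan is to rewrite
\begin{align*}
R^\alpha \;=\; \frac{2^{\alpha-1}\,\tfrac{n}{m}\cdot\tfrac{1}{n}\sum_{j=1}^n |\tilde Y_j|^\alpha \;+\; 2^{\alpha}\cdot\tfrac{1}{m}\sum_{k=1}^m |Y_k|^\alpha}{\inf_{u \in \mathcal{S}^{p-1}}\tfrac{1}{m}\sum_{k=1}^m |u^\top X_k|^\alpha},
\end{align*}
so that numerator and denominator are brought onto the same ``per-observation'' scale. Under the (implicit) assumption $\mathbb{E}\|X\|^\alpha < \infty$, which combined with (A0) gives $\mathbb{E}|Y|^\alpha < \infty$, the two empirical averages in the numerator converge a.s.\ to $\mathbb{E}|Y|^\alpha$ by the SLLN, while by (A1) the factor $n/m$ tends to $1/\lambda$. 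The hard part is the denominator, which calls for a \emph{uniform} SLLN
\begin{align*}
\sup_{u \in \mathcal{S}^{p-1}} \bigg|\frac{1}{m}\sum_{k=1}^m |u^\top X_k|^\alpha - \mathbb{E}|u^\top X|^\alpha\bigg| \;\longrightarrow\; 0 \quad \text{a.s.}
\end{align*}
This is the main obstacle; I would obtain it via an $\varepsilon$-net argument on the compact sphere, relying on the pointwise Lipschitz estimate $\bigl||u_1^\top X|^\alpha - |u_2^\top X|^\alpha\bigr| \le \alpha\,\|X\|^\alpha \|u_1 - u_2\|$ (which follows from $\bigl||a|^\alpha - |b|^\alpha\bigr| \le \alpha(|a|\vee |b|)^{\alpha-1}|a-b|$), so that the parametric class $\{x \mapsto |u^\top x|^\alpha : u \in \mathcal{S}^{p-1}\}$ admits the integrable envelope $\alpha \|X\|^\alpha$ and is Glivenko--Cantelli.

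It then remains to check that $a_\ast := \inf_{u \in \mathcal{S}^{p-1}} \mathbb{E}|u^\top X|^\alpha > 0$. By Fatou, the map $u \mapsto \mathbb{E}|u^\top X|^\alpha$ is lower semicontinuous on the compact sphere, so its infimum is attained at some $u_0$; by (A2), $\mathbb{P}(u_0^\top X = 0) = 0$, whence $\mathbb{E}|u_0^\top X|^\alpha > 0$. Combining the numerator and denominator estimates with a safety factor (say a multiplicative factor of $2$), one concludes that, almost surely for all sufficiently large $n,m$,
\begin{align*}
R^\alpha \;\le\; \frac{2\,(2^{\alpha-1}\lambda^{-1} + 2^{\alpha})\,\mathbb{E}|Y|^\alpha}{a_\ast/2} \;=:\; (R^\ast)^\alpha,
\end{align*}
which is precisely the content of (\ref{R*}) and completes the proof.
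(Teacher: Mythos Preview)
Your argument is correct and follows essentially the same route as the paper: normalize $R^\alpha$, apply the SLLN to the $|Y|^\alpha$ averages together with (A1), control the denominator via a uniform law of large numbers over $\mathcal{S}^{p-1}$, and check $a_\ast>0$ from (A2) and compactness. Two minor remarks: your explicit constant $(R^\ast)^\alpha = 4(2^{\alpha-1}/\lambda + 2^{\alpha})\,\mathbb{E}|Y|^\alpha / a_\ast$ does not literally coincide with the paper's formula in (\ref{R*}) (the paper uses different safety factors, yielding $R^\ast = 2\cdot 3^{1/\alpha}\bigl((1+\lambda)C/(\lambda B)\bigr)^{1/\alpha}$ with $C=\mathbb{E}|Y|^\alpha$ and $B=a_\ast$), so ``precisely the content of (\ref{R*})'' is a slight overstatement; on the other hand, your $\varepsilon$-net/Glivenko--Cantelli justification for the convergence of $\inf_{u}\tfrac{1}{m}\sum_k|u^\top X_k|^\alpha$ is more careful than the paper's bare appeal to ``SLLN and the continuous mapping theorem.''
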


\medskip

\subsection{Consistency}
Recall $R^\ast$ from the previous section; see also (\ref{R*}). In the following, and for the sake of convenience, we will redefine $R^\ast$ to be equal to $\max(R^\ast, \lVert \beta_0\rVert )$.  In the following, we will use the notation:
\begin{itemize}

\item For any measure $Q$ and function $f$ which is integrable with respect to $Q$, $Qf:= \int f dQ$,

\item $\mathbb P_m$ the empirical probability measure based on the matched sample $(X_k, Y_k), k=1, \ldots, m$,

\item $\mathbb P$ the probability measure of the pair $(X, Y)$ such that $Y= \beta^\top_0 X +  \epsilon$. In particular, $d \mathbb P(x, y) =  f(y - \beta^\top_0 x)  f^X(x) dx dy$,

\item $\mathbb P^{\tilde X}$ the probability measure of $\tilde{X}$. Note that it is equal to $\mathbb P^{X}$. In particular, we have $d \mathbb P^{\tilde X}(x) =d\mathbb P^{X}(x) = f^X(x) dx$,

\item $\mathbb P^{\tilde Y}$ the (marginal) probability measure of the response $\tilde{Y} \stackrel{d}{=}  \beta^\top_0 \tilde{X} + \epsilon$.  In particular $d \mathbb P^{\tilde Y}(y)  =  \left(\int f(y - \beta^\top_0 x) f^X(x) dx\right) dy$, 

\item $\mathbb G_m:=  \sqrt m (\mathbb P_m - \mathbb P)$ and $\mathbb G_n = \sqrt n (\mathbb P_n - \mathbb P)$.
\end{itemize}


\medskip
Consider the population criterion
\begin{eqnarray}\label{popcrit}
\ell(\beta) & =  &   \frac{1}{\lambda + 1} \int \log \left( \int f(y - \beta^\top x) d \mathbb P^{\tilde{X}}(x)  \right)  d\mathbb P^{\tilde Y}(y)  +    \frac{\lambda}{\lambda + 1}  \int \log f(y - \beta^\top x)  d\mathbb P(x, y) \notag \\
&&
\end{eqnarray}
for $\beta \in \mathbb R^p$. \\

\par  \noindent We will make the following assumption:
\begin{description}
\item (A3) The covariate $X$ is compactly supported. Hence, we assume that there exists $B > 0$ such that
$$
\mathbb P( \Vert X \Vert \le B)  = 1.
$$
\end{description}

Such an assumption might be too strong and is mainly needed so that the arguments used in the proof of consistency and later of weak convergence work.  Below we show that the expression of the statistical gain obtained under the assumption that $X$ is Gaussian (note that this violates (A3)) can be validated through Monte Carlo simulations. \\

\par \noindent The following theorem is key to showing consistency of the MLE.
\begin{theorem}\label{Consis}
Suppose that (A0)-(A3) are satisfied. Then, it holds that
\begin{eqnarray}\label{UC}
\sup_{\beta \in \overline{\mathcal{B}}(0, R^\ast)}  \left \vert \ell_{n, m}(\beta) -  \ell(\beta)  \right \vert  =o_{\mathbb P \otimes\mathbb P^{\tilde X} \otimes \mathbb P^{\tilde Y}}(1)
\end{eqnarray}
and for any $r > 0$
\begin{eqnarray}\label{Iden}
\sup_{\beta \in \mathcal O_r}  \ell(\beta)  <  \ell(\beta_0)
\end{eqnarray}
where $\ell$ is the same population criterion defined in (\ref{popcrit}) and $\mathcal O_r = \mathcal{B}(0, R^\ast) \cap \{\beta:  \Vert \beta - \beta_0 \Vert > r \}$.  In particular, this implies that
\begin{eqnarray*}
\widehat \beta_{n, m}  \to_{\mathbb P \otimes \mathbb P^{\tilde X} \otimes \mathbb P^{\tilde Y}}  \beta_0
\end{eqnarray*}
as $n, m \to \infty$, where $\widehat \beta_{n, m}$ is any maximizer of the empirical log-likelihood $\ell_{n, m}$.
\end{theorem}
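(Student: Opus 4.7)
The plan is to verify the uniform convergence (\ref{UC}) and the well-separated maximum statement (\ref{Iden}), then conclude consistency by the standard argmax theorem. Write $\ell_{n,m}(\beta)=w_{n,m}A_n(\beta)+(1-w_{n,m})B_m(\beta)$ with $w_{n,m}=n/(n+m)\to 1/(1+\lambda)$ by (A1), where $A_n(\beta):=n^{-1}\sum_{j=1}^n\log \hat g_{n,\beta}(\tilde Y_j)$ with $\hat g_{n,\beta}(y):=n^{-1}\sum_{i=1}^n f(y-\beta^\top \tilde X_i)$ and $B_m(\beta):=m^{-1}\sum_{k=1}^m \log f(Y_k-\beta^\top X_k)$; the population analogues $A,B$ are as in (\ref{popcrit}) with $g_\beta(y):=\int f(y-\beta^\top x)\,d\mathbb{P}^{\tilde X}(x)$. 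Since $A$ and $B$ are continuous and bounded on $\overline{\mathcal{B}}(0,R^\ast)$, it suffices to prove the two uniform convergences $\sup_\beta |A_n-A|=o_P(1)$ and $\sup_\beta |B_m-B|=o_P(1)$. The matched part is the easy half: the class $\{(x,y)\mapsto \log f(y-\beta^\top x):\beta\in\overline{\mathcal{B}}(0,R^\ast)\}$ is continuously parametrized by a compact set and, under (A0) and (A3), admits the integrable envelope $|\log c_\alpha|+d^{-\alpha}(|y|+R^\ast B)^\alpha$, whose integrability under $\mathbb{P}$ follows from $Y=\beta_0^\top X+\epsilon$ with $X$ bounded and $\epsilon$ having all moments; a standard parametric Glivenko--Cantelli (GC) theorem then delivers $\sup_\beta |B_m-B|=o_P(1)$.

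The main obstacle is the unmatched sup, because $\log \hat g_{n,\beta}$ is unbounded and evaluated at the random, unbounded $\tilde Y_j$'s. I would decompose
\[
A_n(\beta)-A(\beta) = \bigl(\mathbb{P}_n^{\tilde Y}-\mathbb{P}^{\tilde Y}\bigr)\log g_\beta \,+\, \mathbb{P}_n^{\tilde Y}\bigl[\log \hat g_{n,\beta} - \log g_\beta\bigr],
\]
where $\mathbb{P}_n^{\tilde Y}$ is the empirical measure of $\tilde Y_1,\ldots,\tilde Y_n$, and handle the two pieces separately. The first is another parametric GC argument (envelope $C(1+|y|^\alpha)$, see below). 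For the second, the crucial observation under (A0) and (A3) is that $f(y-\beta^\top x)\ge c_\alpha\exp(-d^{-\alpha}(|y|+R^\ast B)^\alpha)$ whenever $\|x\|\le B$ and $\|\beta\|\le R^\ast$, so both $g_\beta(y)$ and $\hat g_{n,\beta}(y)$ inherit the same pointwise lower bound. Truncating at $|y|\le T$, the logarithm of these quantities is Lipschitz on the truncated region (with constant depending on $T, R^\ast, B$), and the uniform LLN for the bounded, continuously-parametrized class $\{x\mapsto f(y-\beta^\top x):|y|\le T,\|\beta\|\le R^\ast\}$ over the compact index set transfers to $\sup_{|y|\le T,\,\beta}|\log\hat g_{n,\beta}(y)-\log g_\beta(y)|=o_P(1)$. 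The tail contribution from $|y|>T$ is dominated in absolute value by $C(1+|y|^\alpha)$, whose $\mathbb{P}^{\tilde Y}$-expectation is arbitrarily small for $T$ large, and a LLN on this truncated envelope controls the tail sum.

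For (\ref{Iden}), decompose $\ell(\beta)-\ell(\beta_0)=(1+\lambda)^{-1}[A(\beta)-A(\beta_0)]+\lambda(1+\lambda)^{-1}[B(\beta)-B(\beta_0)]$; both brackets equal minus a Kullback--Leibler divergence and are hence non-positive. The matched bracket rewrites as $-\mathbb{E}[\mathrm{KL}(f\,\|\,f(\cdot+(\beta_0-\beta)^\top X))]$, and since $f$ in (A0) is strictly unimodal at the origin, $f\ne f(\cdot+c)$ for every $c\ne 0$; combined with (A2) this gives $B(\beta)<B(\beta_0)$ strictly for every $\beta\ne\beta_0$, so $\ell(\beta)<\ell(\beta_0)$ pointwise off $\beta_0$. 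Continuity of $\ell$ (dominated convergence with the same integrable envelope) plus compactness of $\overline{\mathcal{O}_r}\subset\overline{\mathcal{B}}(0,R^\ast)\setminus\{\beta_0\}$ upgrade this to the strict supremum inequality. Consistency finally follows from the chain $\ell(\hat\beta_{n,m})\ge\ell_{n,m}(\hat\beta_{n,m})-o_P(1)\ge\ell_{n,m}(\beta_0)-o_P(1)\ge\ell(\beta_0)-o_P(1)$, using Lemma \ref{Lemma: Existence of MLE for asymptotic sample case} to place $\hat\beta_{n,m}$ in $\overline{\mathcal{B}}(0,R^\ast)$ eventually and the well-separation (\ref{Iden}) to force $\hat\beta_{n,m}\to\beta_0$ in $\mathbb{P}\otimes\mathbb{P}^{\tilde X}\otimes\mathbb{P}^{\tilde Y}$-probability.
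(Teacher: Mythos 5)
Your proposal is correct, but it reaches the conclusion by a genuinely more elementary route than the paper. For the uniform convergence (\ref{UC}) the paper controls each piece via empirical-process machinery: a monotone decomposition $\log f = g_+ + g_- -\log c_\alpha$ and VC-subgraph/uniform-entropy bounds for the matched part, and bracketing-entropy bounds for carefully built Lipschitz classes $\widetilde{\mathcal H}$ and $\widetilde{\mathcal G}_y$ (together with the inequality $\log(1+u)\le u$ applied to the ratio $\int f\,d\mathbb P_n^{\tilde X}/\int f\,d\mathbb P^{\tilde X}$) for the unmatched part; this yields the stronger rate $O_{\mathbb P}(n^{-1/2})$ for every supremum. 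You instead use a Wald-type uniform LLN over compact parameter sets combined with a truncation in $y$: the key observations that $\hat g_{n,\beta}$ and $g_\beta$ share the pointwise lower bound $c_\alpha\exp(-d^{-\alpha}(|y|+R^\ast B)^\alpha)$ (so $\log$ is Lipschitz on $\{|y|\le T\}$) and that the tails are dominated by the integrable envelope $C(1+|y|^\alpha)$ are exactly what make this work, and they only deliver $o_{\mathbb P}(1)$ --- which is all Theorem \ref{Consis} needs. Your decomposition of $A_n-A$ is also the ``dual'' of the paper's (you put the empirical measure $\mathbb P_n^{\tilde Y}$ on the ratio term and the population $g_\beta$ in the centered term, the paper does the opposite); both are legitimate. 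For (\ref{Iden}) you and the paper both reduce strictness to the matched KL term; where the paper rules out $f(\cdot)=f(\cdot+a)$, $a\ne 0$, by showing such an $f$ would be periodic and hence non-integrable, you invoke the strict unimodality of the density in (A0) (a.e.\ equality upgrades to everywhere equality by continuity, and a shifted copy cannot share the unique maximizer $0$), which is simpler for this specific family. The compactness/continuity upgrade to a well-separated supremum and the final argmax chain coincide with the paper's use of \cite[Corollary 3.2.3]{aadbookE2}. In short: your argument trades the paper's quantitative $n^{-1/2}$ control (which the paper does not actually need at this stage, though similar entropy estimates reappear in the weak-convergence proofs) for a shorter, purely qualitative consistency proof.
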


The proof of Theorem \ref{Consis} is very much involved due to the \lq\lq unmatched\rq\rq \ part in the log-likelihood function. Handling this part requires non-trivial use of empirical processes. This was done through defining several classes of functions which admit desirable properties; e.g. having a finite bracketing integral.    It is worth noting that putting the matched and unmatched samples together solves any non-identifiability issue that may originally be there with the unmatched sample alone. For example, it is known that that the set of $\beta_0$ such that $\tilde Y \stackrel{d}{=}  \beta^\top_0 X + \epsilon$ when $X \sim \mathcal{N}(0, \Sigma)$ for a positive definite covariance matrix $\Sigma \in \mathbb R^{p \times p} $ is the ellipsoid $\{\beta \in \mathbb R^r:  \beta^\top \Sigma  \beta = c \} $ for some constant $c > 0$; see \cite{monafadoua2024} and  \cite{fadmartjohn}.  Hence, the regression model in non-identifiable in this case, and with unmatched data alone, any reasonable estimator should belong to a set which approximates the true ellipsoid. When matched data from the same model are added, the ellipsoid reduces to a unique element, which is also the regression vector in the regression model from which the matched pairs are observed.   

\section{Weak convergence and statistical gain}\label{WC}

\subsection{Asymptotic normality}
In this section, we need the additional assumption: \\

\begin{description}
    \item (A4)  The power $\alpha \ge 1$ in $f(t) = c_\alpha \exp(-d^{-\alpha} \vert t \vert^\alpha), t \in \mathbb R,$ is an integer. 
\end{description}

Assumption (A4) is made to ensure that the density  has a bounded $k$-th derivative for any integer $k \ge 1$. In the following theorem, which is one very important contribution of this work, we derive the asymptotic normality of a SSLEMLE  $\widehat \beta_{n,m}$. In the proof, we use a first order Taylor expansion of the gradient of $\ell_{n,m}$ around $\widehat \beta_{n,m}$ and its consistency proved in the previous section. Two key elements in the proof are showing that
$$
\frac{1}{\sqrt n}\sum_{j=1}^n \frac{\int x f'(\widetilde Y_j - \beta_0^\top x) d\mathbb P^{\tilde X}_n(x)}{\int f(\widetilde Y_j - \beta_0^\top x) d\mathbb P^{\tilde X}_n(x)}   \to_d \mathcal{N}(0, \Gamma_1 + \Gamma_2)
$$
where $\Gamma_1$ and $\Gamma_2$ are given in (\ref{Gamma1}) and (\ref{Gamma2}) respectively, and that the value of the Hessian matrix of $\ell_{n, m}$ at $\beta_0$ converges in probability to 
$$
-\frac{1}{1+\lambda} \Gamma_1 -  \frac{\lambda}{1+\lambda} \Sigma_2,
$$
where we recall that $\lambda = \lim_{n,m \to \infty} m/n$.  See also Theorem \ref{Remainder} and Theorem \ref{Remainder2}. One can see these results as non-standard Central Limit Theorem and Law of large numbers that require use of additional care since the quantities under study are ratios with random denominators. It is worth noting that the asymptotic variance is not of the form of a Fisher information matrix. As opposed to the usual setting in the weak convergence of the MLE under the classical regularity condition where the asymptotic variance of the score is equal to the limit of the negative of the Hessian, the inverse of the matrix in the middle of the product defining the asymptotic variance of $\widehat \beta_{n,m}$ is not equal to the matrix on the right and left. This asymmetry mainly stems from the fact that the variability of the unmatched part in the score function, that is the gradient of the log-likelihood function, is due to the variability of $\tilde X_i, i =1, \ldots, n$ in addition of that of $\tilde Y_j, j=1, \ldots, n$. This is the reason why the asymptotic variance of the value of the score at $\beta_0$ involves $\Gamma_2$, the variance of an empirical process of the unmatched covariates $\tilde X_i, i =1, \ldots, n$. On the other hand, the limit in probability of the unmatched part in the Hessian matrix of the log-likelihood does not depend on the variability of $\tilde X_i, i =1, \ldots, n$ since it turns out that 
$$
\frac{1}{n} \sum_{j=1}^n \frac{\int x x^\top f''(\tilde Y_j - \beta_0^\top x) d\mathbb P^{\tilde X}_n(x)}{\int f(\tilde Y_j - \beta_0^\top x) d\mathbb P^{\tilde X}_n(x)} \to_{\mathbb P^{\tilde X} \otimes \mathbb P^{\tilde Y}}  0,
$$
see also (\ref{FormerConv}) in the proof of Theorem \ref{Remainder2}.
\bigskip

\begin{theorem}\label{AsympNorm}
Under the Assumptions (A0-A4), it holds that 
\begin{eqnarray*}
\sqrt{m +n} (\widehat \beta_{n, m} - \beta_0)  \to_d \mathcal{N}(0, \Sigma_{\text{SSL}})
\end{eqnarray*}
where
\begin{eqnarray*}
\Sigma_{\text{SSL}} =\left(\frac{1}{1+\lambda} \Gamma_1 + \frac{\lambda}{1+\lambda}\Sigma_2\right)^{-1} \left(\frac{1}{1+\lambda}\left(\Gamma_1+\Gamma_2\right)+\frac{\lambda}{1+\lambda}\Sigma_2\right)\left(\frac{1}{1+\lambda} \Gamma_1 + \frac{\lambda}{1+\lambda}\Sigma_2\right)^{-1}
\end{eqnarray*}
with $\Gamma_1, \Gamma_2$ are given in (\ref{Gamma1}) and (\ref{Gamma2}) respectively and 
$$
\Sigma_2 = \left(\int\frac{( f'(t))^2}{f(t)}dt\right) \mathbb E[X X^\top] =  \frac{\alpha^2}{d^2}  \frac{\Gamma\left(2 - \frac{1}{\alpha}\right)}{\Gamma\left(\frac{1}{\alpha}\right)} \ \mathbb E[X X^\top].
$$
\end{theorem}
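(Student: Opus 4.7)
The plan is to run the classical M-estimator argument in the form appropriate for a semi-supervised (non i.i.d.) criterion. By Theorem \ref{Consis}, $\widehat\beta_{n,m}$ is consistent, so with probability tending to one it sits in the interior of $\overline{\mathcal{B}}(0,R^{\ast})$ and satisfies $\nabla\ell_{n,m}(\widehat\beta_{n,m})=0$. A first-order Taylor expansion along the segment from $\beta_{0}$ to $\widehat\beta_{n,m}$ produces, for some $\tilde\beta$ on that segment,
\begin{equation*}
0 \; =\; \nabla\ell_{n,m}(\beta_{0}) \; +\; H_{n,m}(\tilde\beta)\,(\widehat\beta_{n,m}-\beta_{0}),
\end{equation*}
where $H_{n,m}=\nabla^{2}\ell_{n,m}$. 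Multiplying through by $\sqrt{n+m}$ reduces the theorem to two ingredients: (i) a CLT for $\sqrt{n+m}\,\nabla\ell_{n,m}(\beta_{0})$ with the advertised covariance, and (ii) convergence in probability of $H_{n,m}(\tilde\beta)$ to $-\Lambda$ where $\Lambda=\tfrac{1}{1+\lambda}\Gamma_{1}+\tfrac{\lambda}{1+\lambda}\Sigma_{2}$. Slutsky then closes the argument and yields the sandwich $\Sigma_{\text{SSL}}=\Lambda^{-1}(\tfrac{1}{1+\lambda}(\Gamma_{1}+\Gamma_{2})+\tfrac{\lambda}{1+\lambda}\Sigma_{2})\Lambda^{-1}$.

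For (i), I split $\nabla\ell_{n,m}(\beta_{0})=U_{n}(\beta_{0})+V_{m}(\beta_{0})$, where the unmatched piece $U_{n}(\beta_{0})$ is $-(n+m)^{-1}\sum_{j=1}^{n}$ of the ratios $\int x f'(\tilde Y_{j}-\beta_{0}^{\top}x)\,d\mathbb{P}^{\tilde X}_{n}(x)/\int f(\tilde Y_{j}-\beta_{0}^{\top}x)\,d\mathbb{P}^{\tilde X}_{n}(x)$, and the matched piece is $V_{m}(\beta_{0})=-(n+m)^{-1}\sum_{k=1}^{m}X_{k}f'(\epsilon_{k})/f(\epsilon_{k})$. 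Using (A1), $\sqrt{(n+m)/n}\to\sqrt{1+\lambda}$ and $\sqrt{(n+m)/m}\to\sqrt{(1+\lambda)/\lambda}$. The quoted CLT from Theorem \ref{Remainder} then gives $\sqrt{n+m}\,U_{n}(\beta_{0})\to_{d}\mathcal{N}(0,\tfrac{1}{1+\lambda}(\Gamma_{1}+\Gamma_{2}))$. For $V_{m}(\beta_{0})$, a standard multivariate CLT applies: the summands are i.i.d.\ with mean zero (by independence of $X$ and $\epsilon$ together with $\int f'=0$) and covariance $\Sigma_{2}=\mathbb{E}[XX^{\top}]\int(f')^{2}/f$, whose explicit value for the density in (A0) gives the stated $\alpha^{2}d^{-2}\Gamma(2-1/\alpha)/\Gamma(1/\alpha)$; hence $\sqrt{n+m}\,V_{m}(\beta_{0})\to_{d}\mathcal{N}(0,\tfrac{\lambda}{1+\lambda}\Sigma_{2})$. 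Independence of the matched and unmatched samples gives joint convergence, and the two limits add.

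For (ii), I decompose $H_{n,m}=A_{n}+B_{m}$ and handle each block at $\beta_{0}$ before transferring to $\tilde\beta$. The matched Hessian $B_{m}(\beta_{0})$ is an average of i.i.d.\ random matrices $X_{k}X_{k}^{\top}(\log f)''(\epsilon_{k})$ scaled by $m/(n+m)\to\lambda/(1+\lambda)$; a plain LLN plus the identity $\mathbb{E}[-(\log f)''(\epsilon)]=\int(f')^{2}/f$ gives $B_{m}(\beta_{0})\to_{\mathbb{P}}-\tfrac{\lambda}{1+\lambda}\Sigma_{2}$. For the unmatched Hessian, computing $\partial^{2}_{\beta}\log(n^{-1}\sum_{i}f(\tilde Y_{j}-\beta^{\top}\tilde X_{i}))$ yields a difference between a ratio with $f''$ in the numerator and an outer-product of the score ratio; the Theorem \ref{Remainder2} fact displayed in the text kills the $f''$ term, and the outer-product term converges to $-\Gamma_{1}/(1+\lambda)$, so $A_{n}(\beta_{0})\to_{\mathbb{P}}-\tfrac{1}{1+\lambda}\Gamma_{1}$. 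To pass from $\beta_{0}$ to $\tilde\beta$, I use the consistency of $\tilde\beta$ combined with a local Lipschitz bound on $\beta\mapsto H_{n,m}(\beta)$ on a small ball around $\beta_{0}$; under (A3)--(A4) the relevant derivatives of $f$ are bounded on the compact range of $\beta^{\top}x$, so the Lipschitz constant is tight and the transfer is immediate.

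The main obstacle lies entirely in the unmatched block, as the authors foreshadow. The summands in $U_{n}(\beta_{0})$ are not i.i.d.\ functionals of $\tilde Y_{j}$ alone: the same empirical $\mathbb{P}^{\tilde X}_{n}$ appears in every term, and the ratios have random denominators that must be controlled uniformly away from zero. The analogous issue reappears in the Hessian, where showing that the $f''$-piece converges to zero rather than to some nontrivial limit requires a careful empirical-process argument. These are precisely the contents of Theorems \ref{Remainder} and \ref{Remainder2}, and once those are granted the asymptotic normality and the explicit sandwich form follow by the Taylor-and-Slutsky mechanism sketched above.
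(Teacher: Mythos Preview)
Your proposal is correct and follows essentially the same route as the paper: Taylor-expand the score, invoke Theorem~\ref{Remainder} for the unmatched CLT and Theorem~\ref{Remainder2} for the Hessian limit, handle the matched pieces by the ordinary CLT/LLN, and close with Slutsky. The only cosmetic difference is that the paper expands the Hessian at $\beta_{0}$ with an $o_{\mathbb P}(\|\widehat\beta_{n,m}-\beta_{0}\|)$ remainder, whereas you evaluate it at an intermediate $\tilde\beta$ and then transfer via a local Lipschitz bound; just be careful that for a vector-valued gradient the mean-value form requires componentwise intermediate points (or the integral remainder), not a single $\tilde\beta$.
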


\medskip

\subsection{The statistical gain of adding unmatched samples} \label{subsection: gain}
From the results of asymptotic normality of the SSLEMLE $\widehat{\beta}_{n,m}$ proved in Theorem \ref{AsympNorm} and that of the MLE based on matched samples (mMLE) $\widehat{\beta}_m$, we know that
\begin{align*}
    \sqrt{m+n}(\widehat{\beta}_{n,m} - \beta_0) \overset{d}{\rightarrow} \mathcal{N}(0,\Sigma_{\text{SSL}})
\end{align*}
with $\Sigma_{\text{SSL}}$ as in Theorem \ref{AsympNorm} and  for $\widehat{\beta}_m$ we have
\begin{align*}
    \sqrt{m}(\widehat{\beta}_{m} - \beta_0) \overset{d}{\rightarrow} \mathcal{N}(0,\Sigma_{\text{mMLE}})
\end{align*}
with $\Sigma_{\text{mMLE}} = \left(\int\frac{\left( f'(t)\right)^2}{f(t)}dt \cdot \mathbb E[XX^\top]\right)^{-1} = \Sigma_2^{-1}$. 
Note that the gain is directly related to the asymptotic efficiency of the SSLEMLE over the matched MLE. To evaluate the asymptotic efficiency,  we need first to scale the estimation errors with the same factor.  We have that
\begin{align*}
     \sqrt{m}(\widehat{\beta}_{n,m} - \beta_0) = \sqrt{\frac{m}{n+m}}\sqrt{m+n}(\widehat{\beta}_{n,m} - \beta_0)\overset{d}{\rightarrow} \mathcal{N}(0,\frac{\lambda}{1+\lambda}\Sigma_{\text{SSL}}).
\end{align*}
Now, define 
$$\Tilde{\Sigma}_{\text{SSL}} = \frac{\lambda}{1+\lambda}\Sigma_{\text{SSL}} =  \left(\frac{1}{\lambda} \Gamma_1 + \Sigma_2\right)^{-1} \left(\frac{1}{\lambda}\left(\Gamma_1+\Gamma_2\right)+\Sigma_2\right)\left(\frac{1}{\lambda} \Gamma_1 +\Sigma_2\right)^{-1}.$$ 
Then, we have that
\begin{align*}
    (\widehat{\beta}_{n,m} - \beta_0)^\top \left(m \Tilde{\Sigma}_{\text{SSL}}^{-1}\right) (\widehat{\beta}_{n,m} - \beta_0) \overset{d}{\rightarrow} \chi_{(p)}^2
\end{align*}
and 
\begin{align*}
    (\widehat{\beta}_{m} - \beta_0)^\top \left(m{\Sigma}_{\text{mMLE}}^{-1}\right) (\widehat{\beta}_{m} - \beta_0) \overset{d}{\rightarrow} \chi_{(p)}^2.
\end{align*}
Consider $q_{1-\alpha, p}$ to be the $(1-\alpha)-$quantile  of $\chi_{(p)}^2$-distribution. Then, the confidence regions of $\beta_0$ associated with the SSLEMLE and matched MLE are given by:
\begin{align*}
    \text{CR}_{\text{SSL}} = \{\beta\in\mathbb{R}^p:(\widehat \beta_{n,m} - \beta)^\top \Tilde{\Sigma}_{\text{SSL}}^{-1}(\widehat{\beta}_{n,m} - \beta) \leq \frac{q_{1-\alpha, p}}{m}\}
\end{align*}
and 
\begin{align*}
    \text{CR}_{\text{mMLE}} = \{\beta\in\mathbb{R}^p:(\widehat{\beta}_{m} - \beta)^\top \Sigma_{\text{mMLE}}^{-1}(\widehat{\beta}_{m} - \beta) \leq \frac{q_{1-\alpha, p}}{m}\}
\end{align*}
which  are ellipsoids. Thus,  the ratio of the volumes of the ellipsoids can be computed in order to assess the statistical gain due to the added unmatched sample.  Since the volume of the ellipsoid $\{x \in \mathbb R^p:   x^\top M^{-1} x  \le 1 \}$, for some positive definite matrix $M$, is equal to $\sqrt{\text{det}(M)} \cdot \text{vol}(B^p)$, with $B^p$ the $p$-dimensional unit ball, we can show the following result.

\medskip

\begin{theorem}\label{gain}
Suppose that $\epsilon \sim \mathcal{N}(0, \sigma_\epsilon^2)$ with $\sigma_\epsilon > 0$, and that (A1) holds. Also, assume that $X \sim \mathcal{N}(\mu_X, \Sigma_X)$ with $\mu_X \in \mathbb R^p: \mu_X \ne 0$ and  $\Sigma_X$ is positive definite. Define
\begin{eqnarray*}
\alpha_0 &= & \Sigma_X^{1/2} \beta_0  \\
\theta_X &=  & \Sigma_X^{-1/2}  \mu_X \\
\zeta & = &  \frac{\theta^\top_X \alpha_0}{\Vert \theta_X \Vert \Vert \alpha_0 \Vert}  =   \frac{\mu_X^\top \beta_0}{\sqrt{\beta_0^\top \Sigma_X \beta_0} \sqrt{\mu_X^\top \Sigma_X^{-1} \mu_X}} \\
\rho & = & \frac{1}{\Vert \theta_X \Vert^2}  =  \frac{1}{\mu_X^\top \Sigma_X^{-1} \mu_X} \\
\eta & = & \frac{\Vert \alpha_0 \Vert^2}{\sigma_\epsilon^2}  =  \frac{\beta_0^\top \Sigma_X \beta_0}{\sigma_\epsilon^2}.
\end{eqnarray*}
Then, the OLSE and the matched MLE are equal and the statistical gain $G$ determined by 
\begin{eqnarray}\label{equation: Gain with mean neq 0}
 &&G= \left(\sqrt{\frac{\det{\Tilde{\Sigma}_{\text{SSL}}}}{\det{\Sigma_{\text{OLS}}}}} \right)^{-1} \notag\\
 &&=   \frac{1+\frac{1}{\lambda}\left(\frac{1}{1+\eta}\frac{1}{1+\rho}+\frac{2\eta}{(\eta+1)^2}(1-\frac{\zeta^2}{1+\rho})\right) + \frac{1}{\lambda^2}\frac{1-\zeta^2}{1+\rho}\frac{2\eta}{(\eta+1)^3}} {\sqrt{1+\frac{1}{\lambda}\left(\frac{1}{1+\eta}\frac{1}{1+\rho}(1+\frac{\eta}{\eta+1})+\frac{2\eta}{(\eta+1)^2}(1-\frac{\zeta^2}{1+\rho})(1+\frac{\eta^2}{(\eta+1)^2})\right) + \frac{1}{\lambda^2}\frac{1-\zeta^2}{1+\rho}\frac{2\eta}{(\eta+1)^3}(1+\frac{\eta^2}{(\eta+1)^2})(1+\frac{\eta}{\eta+1})}}.
 \notag \\ 
\end{eqnarray}
If $\mu_X = 0 \in \mathbb R^p$, then
\begin{eqnarray} \label{equation: Gain with mean = 0}
G= \frac{1+\frac{2}{\lambda}\frac{\eta}{(\eta+1)^2}}{\sqrt{1+\frac{2}{\lambda}\left(\frac{\eta}{(\eta+1)^2} + \frac{\eta^3}{(\eta+1)^4}\right)}}. 
\end{eqnarray}
\end{theorem}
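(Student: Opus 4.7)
The proof specializes $\Gamma_1, \Gamma_2, \Sigma_2$ from Theorem \ref{AsympNorm} to the Gaussian setting, reduces the problem to a $2 \times 2$ block via whitening, and then evaluates two small determinants. Since $\alpha = 2$ gives $f'(t)/f(t) = -t/\sigma_\epsilon^2$, we have $\Sigma_2 = \sigma_\epsilon^{-2}\mathbb E[XX^\top]$, which yields $\Sigma_{\text{OLS}} = \Sigma_2^{-1}$ and the rewrite
\begin{equation*}
G^2 \,=\, \frac{[\det(I + \lambda^{-1}\Sigma_2^{-1}\Gamma_1)]^2}{\det(I + \lambda^{-1}\Sigma_2^{-1}(\Gamma_1+\Gamma_2))}.
\end{equation*}
Using Gaussian conditioning, the score ratio $g(y) := \int x f'(y - \beta_0^\top x)\,dF^X(x)/f^Y(y)$ equals $-\tau^{-2}\{(y - \beta_0^\top\mu_X)\nu_y - \Sigma_X\beta_0\}$, with $\tau^2 = \beta_0^\top\Sigma_X\beta_0 + \sigma_\epsilon^2$ and $\nu_y$ the Gaussian conditional mean of $X$ given $Y = y$. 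Taking the variance of $g(Y)$ over $Y$ and using Gaussian moments up to order four gives $\Gamma_1 = \tau^{-2}\mu_X\mu_X^\top + 2\tau^{-4}\Sigma_X\beta_0\beta_0^\top\Sigma_X$.

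For $\Gamma_2$ I would use the Hoeffding-style decomposition sketched in the preamble of Theorem \ref{AsympNorm}: the contribution of the unmatched covariates amounts to $\psi(X) := -\mathbb E_\epsilon[g(\beta_0^\top X + \epsilon)]$, which a short calculation shows depends on $X$ only through $V := \beta_0^\top(X - \mu_X)$ in the linear-plus-quadratic form $\psi(X) = \tau^{-2}V\mu_X + \tau^{-4}(V^2 - \beta_0^\top\Sigma_X\beta_0)\Sigma_X\beta_0$. Since $V$ and $V^2 - \mathbb E[V^2]$ are uncorrelated (because $V$ is centered Gaussian, so $\mathbb E[V^3] = 0$) with variances $\beta_0^\top\Sigma_X\beta_0$ and $2(\beta_0^\top\Sigma_X\beta_0)^2$, one obtains $\Gamma_2 = (\beta_0^\top\Sigma_X\beta_0)\tau^{-4}\mu_X\mu_X^\top + 2(\beta_0^\top\Sigma_X\beta_0)^2\tau^{-8}\Sigma_X\beta_0\beta_0^\top\Sigma_X$. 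The key observation is that $\Sigma_2, \Gamma_1, \Gamma_2$ all lie in $\operatorname{span}\{I, \mu_X\mu_X^\top, \Sigma_X\beta_0\beta_0^\top\Sigma_X\}$. Whitening $X \mapsto \Sigma_X^{-1/2}(X - \mu_X)$ sends these generators to $I, \theta_X\theta_X^\top, \alpha_0\alpha_0^\top$, leaving $G^2$ invariant. Choosing an orthonormal basis extending $\{u, u^\perp\} \subset \operatorname{span}\{\theta_X, \alpha_0\}$ with $u = \theta_X/\|\theta_X\|$ then renders every matrix block diagonal: a $2\times 2$ block depending only on $\rho, \eta, \zeta$ through $\|\theta_X\|^2 = 1/\rho$, $\|\alpha_0\|^2 = \sigma_\epsilon^2\eta$, and $u^\top e_1 = \zeta$ (where $e_1 = \alpha_0/\|\alpha_0\|$), and a trivial $(p-2) \times (p-2)$ scalar block on which $I + \lambda^{-1}\Sigma_2^{-1}\Gamma_1$ and its analog equal the identity and contribute $1$ to each determinant.

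It then suffices to apply the $2 \times 2$ identity $\det(I + A) = 1 + \operatorname{tr}(A) + \det(A)$ to $A = \lambda^{-1}\tilde\Sigma_2^{-1}\tilde\Gamma_1$ and $A = \lambda^{-1}\tilde\Sigma_2^{-1}(\tilde\Gamma_1 + \tilde\Gamma_2)$. Using $\tilde\Sigma_2^{-1} = \operatorname{diag}(\rho/(1+\rho), 1)$ in the $(u, u^\perp)$ basis — which is exactly where the factor $1/(1+\rho)$ in the statement enters, via $\rho/(1+\rho) \cdot 1/\rho = 1/(1+\rho)$ — one checks that the traces and determinants of $\tilde\Sigma_2^{-1}\tilde\Gamma_1$ and $\tilde\Sigma_2^{-1}(\tilde\Gamma_1 + \tilde\Gamma_2)$ reproduce the coefficients of $1/\lambda$ and $1/\lambda^2$ in the numerator and in the expression under the square root of (\ref{equation: Gain with mean neq 0}). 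This final algebraic matching is the step I expect to be the main obstacle; each individual computation is routine, but many terms have to be grouped correctly using identities like $1 + \eta/(1+\eta) = (1+2\eta)/(1+\eta)$. The case $\mu_X = 0$ is immediate within the same framework: $\theta_X = 0$ forces $\tilde\Sigma_2 = I$ and kills the $\mu_X\mu_X^\top$ contributions to $\tilde\Gamma_1, \tilde\Gamma_2$, so only rank-one $e_1 e_1^\top$ pieces survive, $\det(A) = 0$, and only the trace terms remain, delivering (\ref{equation: Gain with mean = 0}) at once.
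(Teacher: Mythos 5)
Your proposal is correct and follows essentially the same route as the paper's proof: reduce the gain to $[\det(I+\lambda^{-1}\Sigma_2^{-1}\Gamma_1)]^2/\det(I+\lambda^{-1}\Sigma_2^{-1}(\Gamma_1+\Gamma_2))$, compute $\Sigma_2,\Gamma_1,\Gamma_2$ explicitly in the Gaussian case (your conditional-moment and variance-decomposition evaluations of $g(Y)$ and $\psi(X)$ reproduce exactly the paper's matrices $\tau^{-2}\mu_X\mu_X^\top+2\tau^{-4}\Sigma_X\beta_0\beta_0^\top\Sigma_X$ and its $\Gamma_2$ analogue), observe that all three matrices act nontrivially only on $\operatorname{span}\{\mu_X,\Sigma_X\beta_0\}$, and evaluate the resulting $2\times2$ determinants via $\det(I+A)=1+\operatorname{tr}(A)+\det(A)$, which is precisely the paper's Vieta's-formula step. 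The only cosmetic differences are that you whiten at the outset while the paper treats $\Sigma_X=\sigma_X^2\mathbbm{1}$ first and passes to general $\Sigma_X$ at the end, and that your derivations of $\Gamma_1,\Gamma_2$ are slicker than the paper's direct integrations; your $\mu_X=0$ specialization also checks out exactly.
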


\medskip
\medskip

Strictly speaking, Theorem \ref{gain} is a conjecture about the statistical gain of the SLLEMLE when the covariate is Gaussian. In fact, assumption (A3) is not fulfilled in this case since it requires the covariate distribution to be compactly supported. However, the formulas  derived in the theorem can be validated through Monte Carlo simulations; see Figures \ref{fig:s1} and \ref{fig:s2}.  

Some remarks about the formulas obtained in Theorem \ref{gain} are in order.
\paragraph{The case $\mu_X = 0$.} It follows from (\ref{equation: Gain with mean = 0}) that for small $\lambda$ 
\begin{eqnarray*}
G \approx \frac{\sqrt{2\eta}}{\sqrt{\lambda}\,\sqrt{\,2\eta^{2} + 2\eta + 1\,}}.
\end{eqnarray*}
Then, $G$ achieves its maximum at $\eta^\ast = 1/\sqrt 2$, or equivalently at $\text{SNR}^\ast = \sqrt{\eta^\ast} =  1/2^{1/4} \approx  0.840$, and the maximum gain is $G^\ast \approx  0.643/\sqrt{\lambda} $.  
In general, it can be shown that the gain is a unimodal function of $\eta$ for any value of $\lambda \in (0,1)$. Before presenting a formal proof, this behavior can be explained as follows.  When the SNR is too small, it might be too hard for the unmatched sample to boost the performance of the matched MLE but its effect starts to be visible for larger values of the SNR. On the other hand, if the SNR is too large, then the matched sample is enough to achieve a good estimation quality, which also means that the contribution of the unmatched sample becomes less \lq\lq impressive\rq\rq \ in this case. Although this explanation seems plausible, we would like to note that unimodality of the gain might be distribution-specific. In fact, a different behavior is observed when the noise has a Laplace distribution; see Figure \ref{fig:s6}. However, for all noise distributions, we should expect that the gain tends to $1$ for large values of SNR. In the Gaussian case, the limit $1$ can be easily recovered from both formulas by tending $\eta$ to $\infty$. 

To show that the gain is always unimodal function when $X$ is Gaussian with $\mu_X =0$,  let us fix $\lambda \in (0,1)$ and define the gain $G$ as a function of $\eta$. In other words, $G$ is given by 
\begin{align*}
    G_\lambda(\eta) & =  \frac{1+\frac{2}{\lambda}\frac{\eta}{(\eta+1)^2}}{\sqrt{1+\frac{2}{\lambda}\left(\frac{\eta}{(\eta+1)^2} + \frac{\eta^3}{(\eta+1)^4}\right)}} \\
    &= \frac{\eta^2 + 2(1 + \lambda^{-1})\eta  +1}{\sqrt{\eta^4 + 4 (1+ \lambda^{-1}) \eta^3 + 2 (3 + 2\lambda^{-1}) \eta^2 + 2 (2 + \lambda^{-1}) \eta + 1}}
\end{align*}
for $\eta > 0$. After some algebra we find that the first derivative of $G_\lambda$ is equal to 
\begin{align*}
    G_\lambda'(\eta) = \frac{- 4 (1+\lambda) \eta^3 - 3 \lambda \eta^2 + 2 (1+\lambda) \eta + \lambda}{\lambda^2 \left(\eta^4 + 4 (1+ \lambda^{-1}) \eta^3 + 2 (3 + 2\lambda^{-1}) \eta^2 + 2 (2 + \lambda^{-1}) \eta + 1\right)^{\frac{3}{2}}}
\end{align*}
whose sign is that of 
\[
P_\lambda(\eta) := - 4 (1+\lambda) \eta^3 - 3 \lambda \eta^2 + 2 (1+\lambda) \eta + \lambda, \ \eta \in (0, \infty).
\]
In order to investigate the sign of $P_\lambda$, we view it here as a   polynomial defined on $\mathbb R$. Note that  $P_\lambda(-1) = 2 >0$, $P_\lambda(-\frac{1}{2}) = -\frac{1}{2} - \frac{1}{4}\lambda <0$ , $P_\lambda(0) = \lambda > 0$ and $P_\lambda(1) = -2-4\lambda < 0$. By the intermediate value theorem, the three roots $\eta_1^\ast$, $\eta_2^\ast$ and $\eta_3^\ast$ of $P_\lambda$ are such that $\eta_1^\ast\in (-1, -\frac{1}{2})$, $\eta_2^\ast\in (-\frac{1}{2}, 0)$ and $\eta_3^\ast\in (0, 1)$. Since there are no more roots than these three, we can then conclude that $\eta_3^\ast$ is the unique root of $P_\lambda$ in $(0,\infty)$.  This also means that $P_\lambda(\eta) > 0$ for $\eta \in (0, \eta_3^\ast)$ and $P_\lambda(\eta) < 0$ for $\eta \in  (\eta_3^\ast, \infty)$. This in turn implies $G_\lambda$ is increasing on $(0, \eta_3^\ast)$ and decreasing on $(\eta_3^\ast, \infty)$. This shows that the gain is in fact unimodal for any $\lambda \in (0,1)$.

Now, if $\eta$ or equivalently the SNR is fixed, one can view the the gain as a function of $\lambda$. Put $t = 2/\lambda$. Then, we can write that
\begin{eqnarray*}
G = G_\eta(t) & = &  \frac{1  +  t  \frac{\eta}{(\eta+ 1)^2}}{\sqrt{1 + t  \left(\frac{\eta}{(\eta+1)^2} + \frac{\eta^3}{(\eta+1)^4}\right)}}.
\end{eqnarray*}
Putting $A  = \eta/(\eta+ 1)^2$ and $B = \eta/(\eta+1)^2 + \eta^3/(\eta+1)^4$
\begin{eqnarray*}
G'_\eta(t) =  \frac{2A - B + AB t}{2 (1+ t B)^{3/2}}
\end{eqnarray*}
with $2A - B = \eta/(\eta +1)^2 (1- \eta^2/(\eta+1)^2) > 0$ and $ AB > 0$. It follows that $G$ is an increasing function of $1/\lambda$. This means that the smaller $\lambda$ the bigger is the gain. This is an expected result since smaller values of $\lambda$  occur when the size of the unmatched sample is significantly larger that of the matched one.

When $\beta_0 = 0 \in\mathbb{R}^p$, then (\ref{equation: Gain with mean neq 0}) implies that
\begin{align*}
   G= \sqrt{1+\frac{1}{\lambda}\frac{1}{1+\rho}} \quad \text{with $\rho  =\frac{1}{\mu^\top_X \Sigma_X^{-1}\mu_X}$}
\end{align*}
and hence if $\mu_X = \beta_0 = 0$, then $G=1$, which means that there is no statistical gain from adding the unmatched part by our proposed estimator. This remains true for any distribution of $X$ and $\epsilon$.  In fact, when $\mu_X = \beta_0= 0$, it is elementary to check that $\Gamma_1 = \Gamma_2 = 0$, and hence $\widetilde{\Sigma}_{SSL} = \Sigma_2^{-1} = \Sigma_{OLS}$.

\paragraph{The case $\mu_X \ne 0$.} The formula in (\ref{equation: Gain with mean neq 0}) describes a more complex relationship between $G$ and the SNR due to the additional dependence on the parameters $\rho$ and $\zeta$. If $\Sigma_X$ is a diagonal,  $\zeta$ can be viewed as the cosine of the angle between $\beta_0$ and $\mu_X$. Hence, when $\mu_X \ne 0$, the gain is affected by the orientation of $\beta_0$ with respect to $\mu_X$. The exact variations of the gain as a function of $\zeta$, when all the other parameters are held fixed, seem to be very hard to study. However, when $\lambda$ is small then it holds that
\begin{eqnarray*}
G \approx \frac{1}{\lambda}  \frac{\sqrt{1 - \zeta^2} \sqrt{2\eta}}{\sqrt{(1+\rho) (\eta^2 + (\eta +1)^2) (2 \eta +1)}}
\end{eqnarray*}
which shows that $G$ is decreasing function in $\vert \zeta \vert$ and hence the more $\beta_0$ and $\mu_X$ are aligned the smaller $G$ is. See also Figure \ref{fig:G(zeta)} where we plot $G$ versus $\zeta$ for $\rho = \eta = 1$ and $\lambda = 0.1$.

\begin{figure}[h]
\centering
   \includegraphics[width=0.65\textwidth]{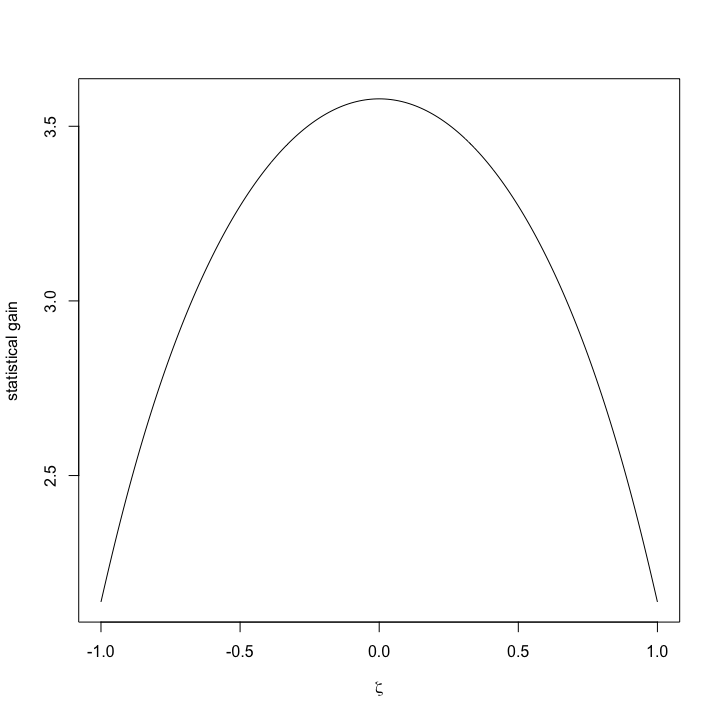}
    \caption{Statistical gain versus $\zeta$ with fixed $\rho =1$, $\eta = 1$ and $\lambda = 0.1$.}
    \label{fig:G(zeta)}
\end{figure}

\section{Simulation results and data application}
\subsection{Simulation settings}
In the following, we present the results obtained for synthetic data sets. We used \textsf{R} by \cite{Rstats} to implement the simulations. In the codes, we also used the libraries \texttt{sfsmisc} by \cite{Rsfsmics}, \texttt{VGAM} by \cite{RVGAM} and \texttt{MASS} by \cite{RMASS}. One of the main goals of the simulation study is to validate the theoretical expression for the statistical gain obtained in subsection \ref{subsection: gain} with its empirical Monte Carlo estimates.  

We investigate the relationship between the gain and $\eta$ as shown in (\ref{equation: Gain with mean = 0}). To simplify the simulation settings, we fixed the standard deviation $\sigma_\epsilon$ of the noise variable and also the covariance matrix $\Sigma_X$ of the covariate. Thus, $\eta = \beta_0^\top \Sigma_X \beta_0 /\sigma_\epsilon^2$ depends only on the true regression vector $\beta_0$. Since the value $\sqrt{\eta}$ is equal to signal-to-noise ratio, we denote it by SNR. 

We used 15 different regression vectors $\beta_0$'s which yield different values of $\eta$, and hence different statistical gains. To generate these $\beta_0$'s, we implemented the following steps. We considered the vector $\begin{pmatrix} 2 & 2 & 2 \end{pmatrix}^\top$, and added to it the 8 corners and the centers of the 6 faces of the cube $[-1,1]^3$. To these 15 vectors, we added a small Gaussian perturbation  $ \sim \mathcal{N}(0,0.1^2\cdot\mathbbm{1}_{3\times 3})$. The obtained vectors were then divided by their norms and then rescaled so that the final norms are equal to 15 real numbers which are equally spaced between 0 and 8.
For each vector $\beta_0$, we generated the following data:
\begin{enumerate}[(1)]
    \item $n$ unmatched covariates sampled from some given distribution of $X$,
    \item $n$ unmatched responses sampled separately from the distribution $Y \overset{d}{=} \beta_0^\top X + \epsilon$,
    \item $m$ matched pairs for some fixed $\lambda$.
\end{enumerate}
Then, we computed the SSLEMLE $\widehat{\beta}_{n,m}$ and $\widehat{\beta}_m$  the MLE based the matched sample, which is equal to the OLSE in the settings where the noise is Gaussian. For the settings where $\epsilon$ is chosen to follow a Laplace distribution, we also compute the OLSE. For each $\beta_0$, we repeated the above process 2500 times, and hence obtained 2500 estimators for each of the 15 vectors $\beta_0$. In the final step, we calculated for each $\beta_0$ the empirical covariance matrix based on the   
the obtained 2500 scaled estimation errors. 
The estimate of the statistical gain is simply set to be the ratio of the square root of the determinants of the obtained sample covariance matrices. 
We then compared these results with the theoretical ones given in (\ref{equation: Gain with mean neq 0}) and (\ref{equation: Gain with mean = 0}), when both the covariate variable $X$ and the noise $\epsilon$ follow a Gaussian distribution as assumed in subsection \ref{subsection: gain}.

In Table \ref{Table: simulations}, we indicated the 6 simulation settings considered in this section.

\begin{sidewaystable}
\centering
\begin{tabular}{|c|c|c|c|c|c|}
\hline
\textbf{index} & \textbf{noise $\epsilon$} & \textbf{covariate $X$} & \textbf{ratio $\lambda$} & \textbf{\# of unmatched data} & \textbf{iterations} \\
\hline
1 & $\mathcal N(0,\sigma_\epsilon^2)$   & $\mathcal N(\mathbf{0}, \sigma_X^2\mathbbm{1}_{3\times 3})$    & $0.2$ and $0.6$ & $200$, $1000$ and $5000$  & 2500\\
\hline
2 & $\mathcal N(0,\sigma_\epsilon^2)$   & $\mathcal N(\mu_X\mathbf{1}_{3},\sigma_X^2\mathbbm{1}_{3\times 3})$    & $0.2$ and $0.6$ & $200$, $1000$ and $5000$  & 2500\\
\hline
3 & $\mathcal N(0,\sigma_\epsilon^2)$   & $\text{U}([ - \sqrt{3}\sigma_X,\sqrt{3}\sigma_X]^3)$    & $0.2$ and $0.6$ & $200$, $1000$ and $5000$  & 2500\\
\hline
4 & $\mathcal N(0,\sigma_\epsilon^2)$   & $\text{U}([\mu_X - \sqrt{3}\sigma_X,\mu_X + \sqrt{3}\sigma_X]^3)$    & $0.2$ and $0.6$ & $200$, $1000$ and $5000$  & 2500\\
\hline
5 & $\text{Laplace}(0,\frac{\sigma_\epsilon}{\sqrt{2}})$   & $\mathcal N(\mathbf{0}, \sigma_X^2\mathbbm{1}_{3\times 3})$   & $0.2$ and $0.6$ & $200$, $1000$ and $5000$  & 2500\\
\hline
6 & $\text{Laplace}(0,\frac{\sigma_\epsilon}{\sqrt{2}})$   & $\mathcal N(\mu_X\mathbf{1}_{3},\sigma_X^2\mathbbm{1}_{3\times 3})$   & $0.2$ and $0.6$ & $200$, $1000$ and $5000$  & 2500\\
\hline
\end{tabular}
\vspace{0.5em} 
\caption{$\mathbf{1}_3 \in \mathbb{R}^3$ denotes the all-ones vector, and $\mathbbm{1}_{3\times 3}$ the identity matrix. We used $\sigma_\epsilon = 0.8\sqrt{10}$, $\mu_X = 5$ and $\sigma_X = 1$.  }\label{Table: simulations}
\end{sidewaystable}

\subsection{Simulation results}

When $X$ is Gaussian, the results are shown in Figures \ref{fig:s1} and \ref{fig:s2}. Figure \ref{fig:s1} corresponds to the case where $\mathbb E(X) = \mu_X = 0$. One can see that the empirical gain approximates quite well the theoretical one for large sample sizes of unmatched data. Note that in this case the gain depends only on the SNR (or more precisely on $\eta$) as derived in (\ref{equation: Gain with mean = 0}). Thus, connecting the points through a linear interpolation gives an accurate illustration of the actual dependence of the gain $G$ on SNR between the points. 

\begin{figure}[h]
    \centering
    \begin{subfigure}{0.48\textwidth}  
        \includegraphics[width=\textwidth]{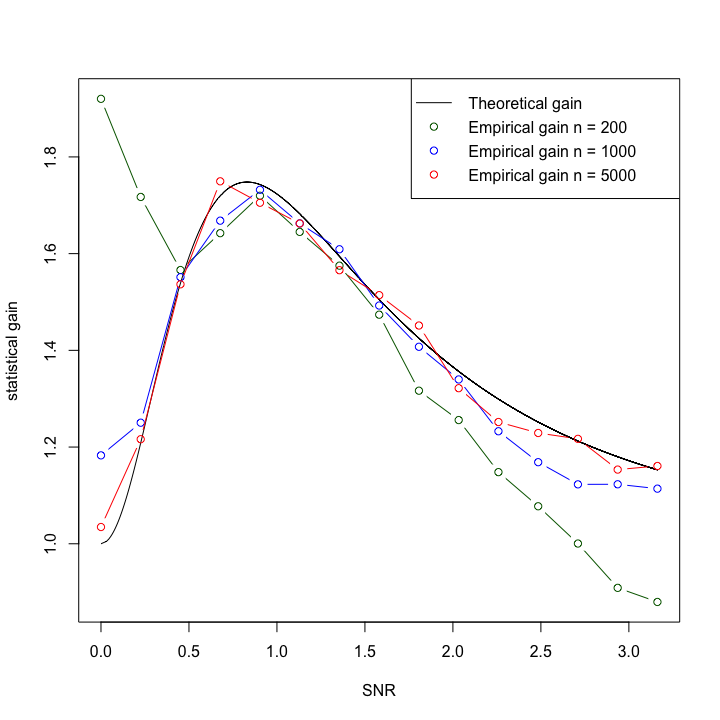} 
        \caption{$\lambda = 0.2$}
    \end{subfigure}
    \hfill
    \begin{subfigure}{0.48\textwidth}
        \includegraphics[width=\textwidth]{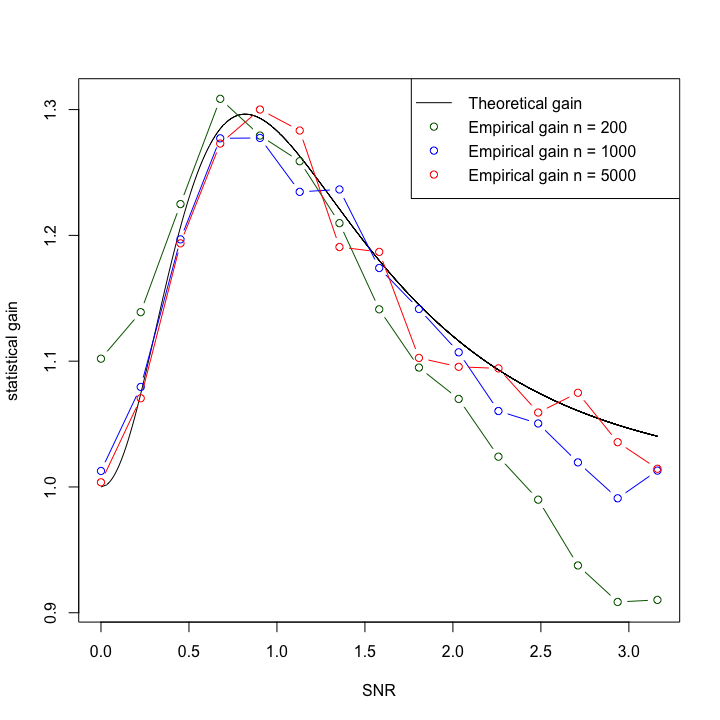} 
        \caption{$\lambda = 0.6$}
    \end{subfigure}
    \caption{Results of simulation with $\epsilon \sim \mathcal{N}(0, (0.8\sqrt{10})^2)$ and $X \sim \mathcal{N}(0,\mathbbm{1}_{3\times 3})$.}
    \label{fig:s1}
\end{figure}

As already mentioned above, in the case where $\mu_X \neq 0$, the gain does not depend solely on the SNR, but also on $\zeta$ and $\rho$ as one can see from the formula in (\ref{equation: Gain with mean neq 0}). Thus, the behavior of $G$ is expected to be different as for the case $\mu_X = 0$.  In Figure \ref{fig:s2}, one can see that the gain drops for the value $\text{SNR} \approx 1.8$ before increasing again at $\text{SNR} =2$. This is not an artifact due to finite sample sizes because the same behavior is exhibited by the theoretical gain.  As $\mu_X$, $\Sigma_X$ and $\rho$ are fixed, the behavior can be explained through the aforementioned dependence of $G$ on $\zeta^2$ which is equal to the squared cosine of the angle between $\beta_0$ and $\mu_X$ since $ \Sigma_X$ is diagonal. Hence, the observed decrease of the gain in this simulation setting is due to the fact that $\beta_0$ and $\mu_X$ are nearly co-linear. We refer the reader to the discussion above for the case $\mu_X \ne 0$ and also to Figure \ref{fig:G(zeta)}.


\begin{figure}[h]
    \centering
    \begin{subfigure}{0.48\textwidth}  
        \includegraphics[width=\textwidth]{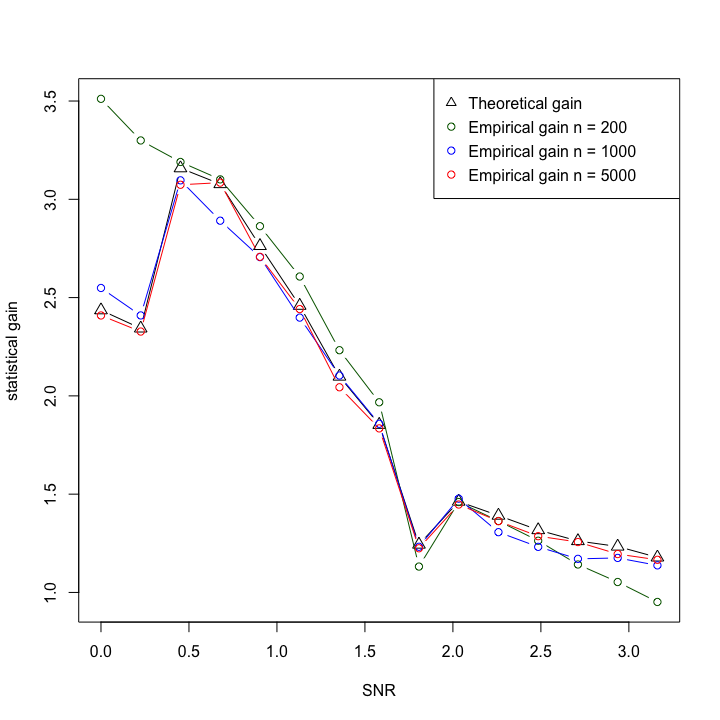} 
        \caption{$\lambda = 0.2$}
    \end{subfigure}
    \hfill
    \begin{subfigure}{0.48\textwidth}
        \includegraphics[width=\textwidth]{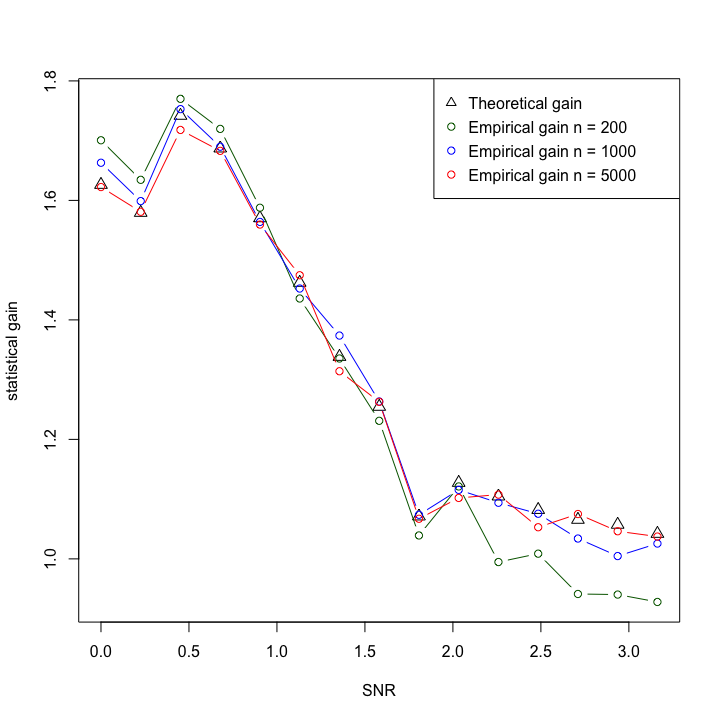} 
        \caption{$\lambda = 0.6$}
    \end{subfigure}
    \caption{Results of simulation with $\epsilon \sim \mathcal{N}(0, (0.8\sqrt{10})^2)$ and $X \sim \mathcal{N}(5 \cdot \mathbf{1}_3,\mathbbm{1}_{3\times 3})$.}
    \label{fig:s2}
\end{figure}


When $X$ follows a uniform distribution, we have no explicit formula for the statistical gain as the expression of $\Gamma_1$ and $\Gamma_2$ are intractable in this case. Figures \ref{fig:s3} and \ref{fig:s4} illustrate curves that are similar to those in the case of a Gaussian covariate but without their theoretical counterpart. With increasing SNR, the statistical gain increases first and then reaches its maximum. Then, with larger SNR, the statistical gain starts to decrease. In both Figures \ref{fig:s3} and \ref{fig:s4}, we can also observe that the gain drops at SNR $\approx 1.8$ to have a local peak at SNR $\approx 2$. Although $\mu_X = 0$ in this case, we suspect that the direction of $\beta_0$ in $\mathbb{R}^3$ plays an additional role when the distribution of $X$ is not Gaussian.

\begin{figure}[h]
    \centering
    \begin{subfigure}{0.48\textwidth}  
        \includegraphics[width=\textwidth]{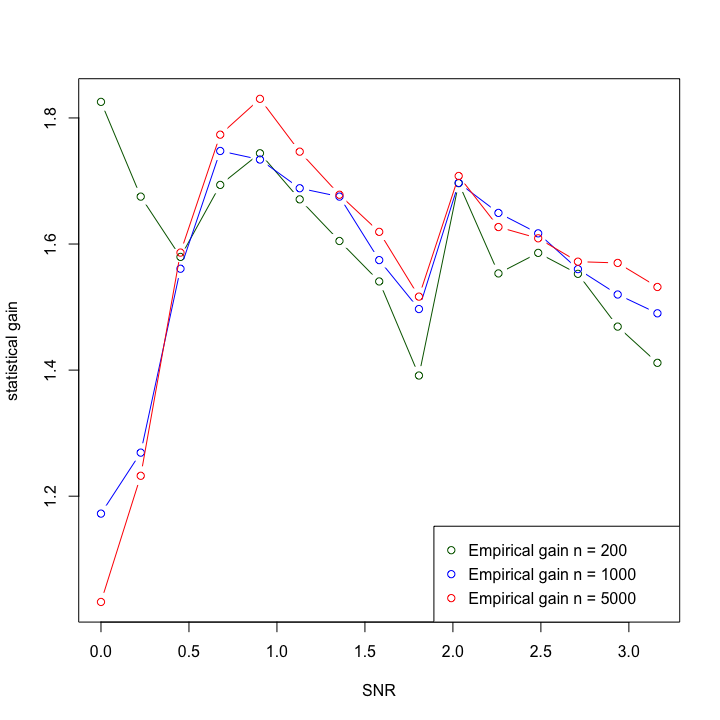} 
        \caption{$\lambda = 0.2$}
    \end{subfigure}
    \hfill
    \begin{subfigure}{0.48\textwidth}
        \includegraphics[width=\textwidth]{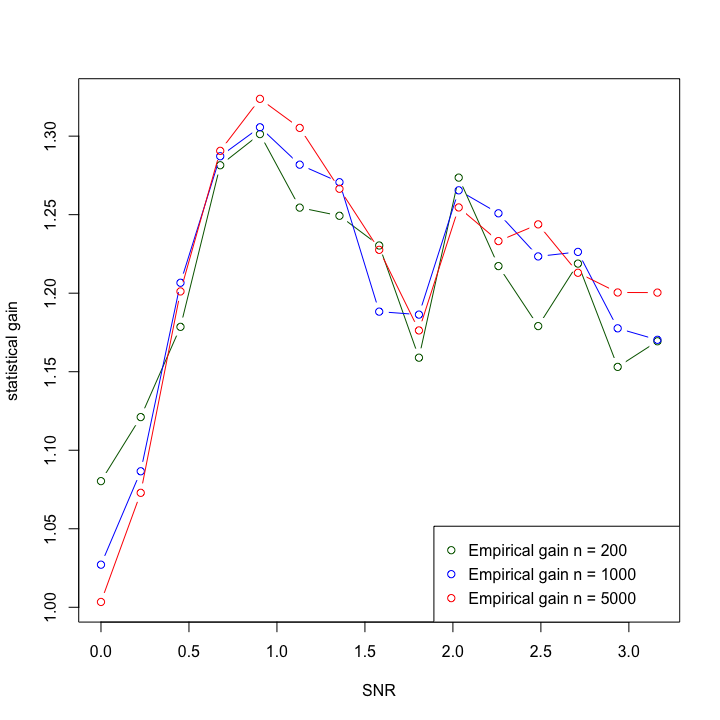} 
        \caption{$\lambda = 0.6$}
    \end{subfigure}
    \caption{Results of simulation with $\epsilon \sim \mathcal{N}(0, (0.8\sqrt{10})^2)$ and $X \sim \text{U}([-\sqrt{3}, \sqrt{3}]^3)$.}
    \label{fig:s3}
\end{figure}

\begin{figure}[h]
    \centering
    \begin{subfigure}{0.48\textwidth}  
        \includegraphics[width=\textwidth]{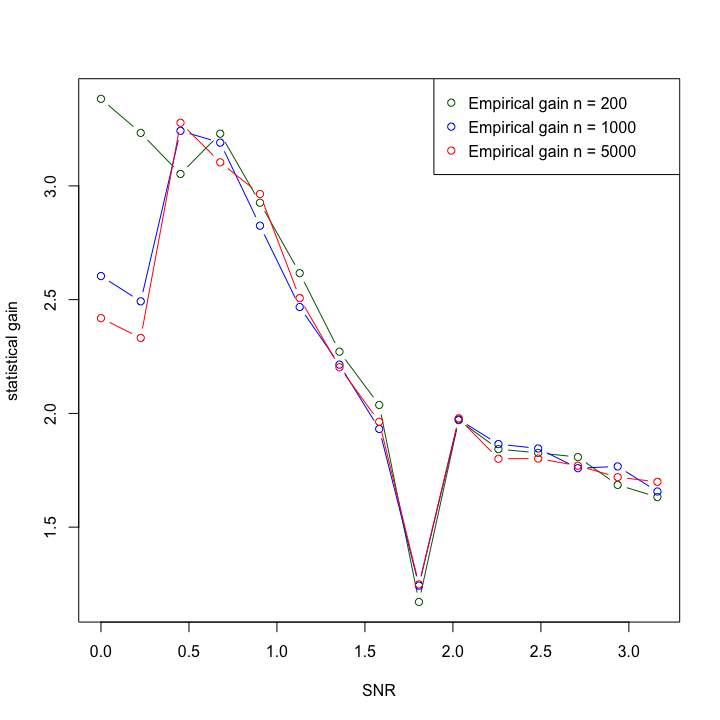} 
        \caption{$\lambda = 0.2$}
    \end{subfigure}
    \hfill
    \begin{subfigure}{0.48\textwidth}
        \includegraphics[width=\textwidth]{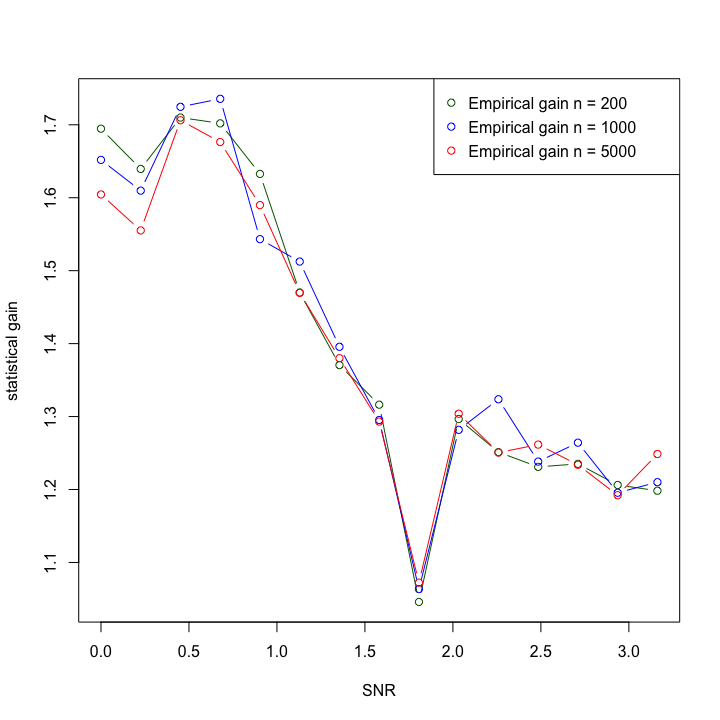} 
        \caption{$\lambda = 0.6$}
    \end{subfigure}
    \caption{Results of simulation with $\epsilon \sim \mathcal{N}(0, (0.8\sqrt{10})^2)$ and $X \sim \text{U}([5-\sqrt{3}, 5+\sqrt{3}]^3)$.}
    \label{fig:s4}
\end{figure}

In the case where $\epsilon$ follows a Laplace distribution, the OLSE is no longer equal to the MLE of the matched sample. Thus, for simulations \#5 and \#6 (see Table \ref{Table: simulations}) we compute not only the empirical gain $\left(\sqrt{\frac{\det{(\widehat{\Sigma}_{\text{SSL}})}}{\det{(\widehat{\Sigma}_{\text{OLS}})}}}\right)^{-1}$, but also $\left(\sqrt{\frac{\det{(\widehat{\Sigma}_{\text{SSL}})}}{\det{(\widehat{\Sigma}_{\text{mMLE}})}}}\right)^{-1}$.  As shown in Figures $\ref{fig:s5}$ and $\ref{fig:s6}$, the curves behave similarly as in the Gaussian noise case. The proposed estimator performs better than both the OLSE and the matched MLE. As expected, the improvement relatively to the OLSE is larger than that to the matched MLE.  Finally, we would like to note that in all simulations the gain is larger for smaller values $\lambda$.  This fact is to be expected as more unmatched data should have a better contribution to the performance of the SSLEMLE. In the Gaussian case, this can be seen explicitly from the formulas of Theorem \ref{gain} which imply that $G$ is monotone increasing in $1/\lambda$ when all the other quantities are fixed. 

\begin{figure}[h]
    \centering
    \begin{subfigure}{0.82\textwidth} 
        \centering
        \includegraphics[width=\textwidth]{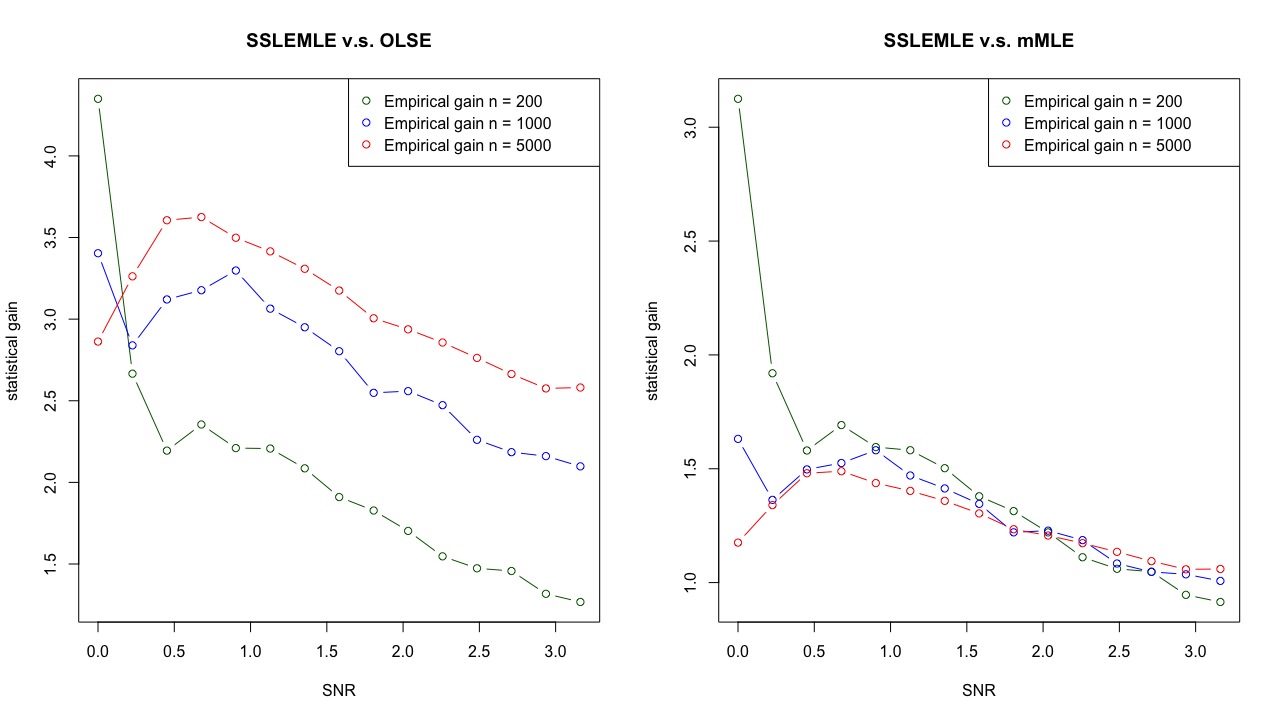}
        \caption{$\lambda = 0.2$}
    \end{subfigure}
    \vspace{0.4em}
    \begin{subfigure}{0.82\textwidth}
        \centering
        \includegraphics[width=\textwidth]{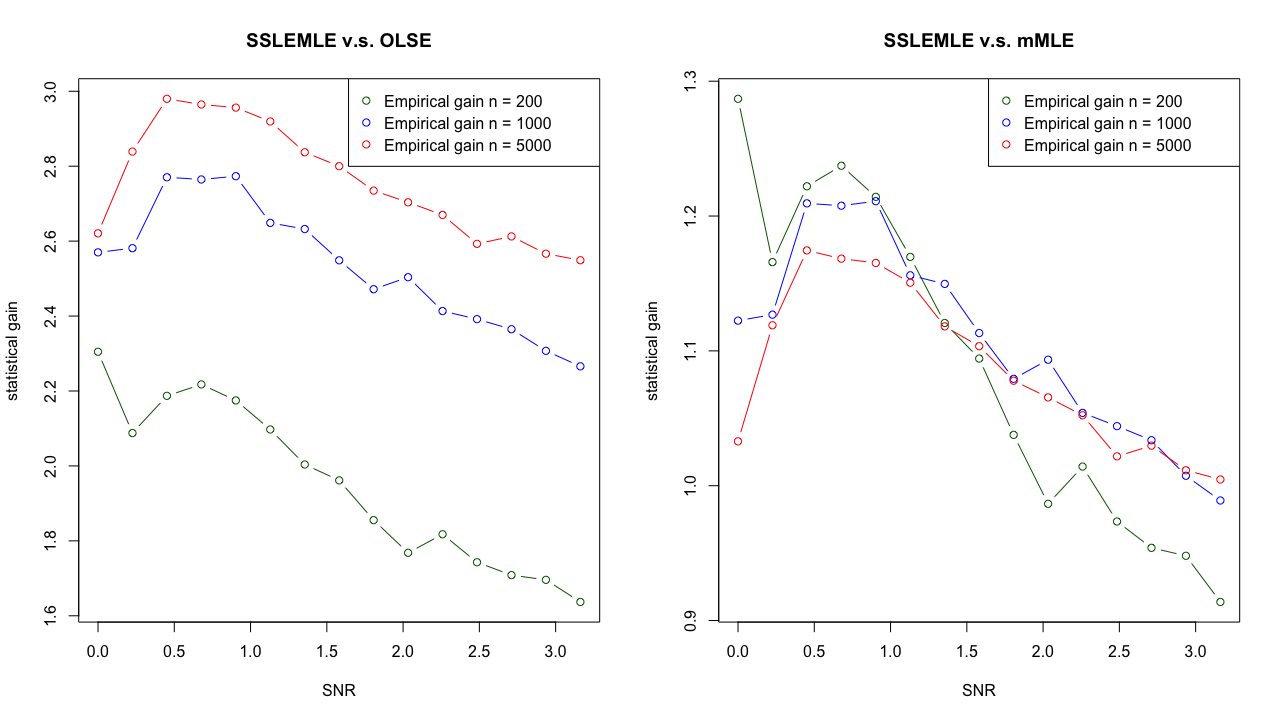}
        \caption{$\lambda = 0.6$}
    \end{subfigure}
\caption{Results of simulation with $\epsilon \sim \text{Laplace}(0, \frac{0.8\sqrt{10}}{\sqrt{2}})$, $X \sim \mathcal{N}(0,\mathbbm{1}_{3\times 3})$.}
    \label{fig:s5}
\end{figure}

\begin{figure}[h]
    \centering
    \begin{subfigure}{0.82\textwidth} 
        \centering
        \includegraphics[width=\textwidth]{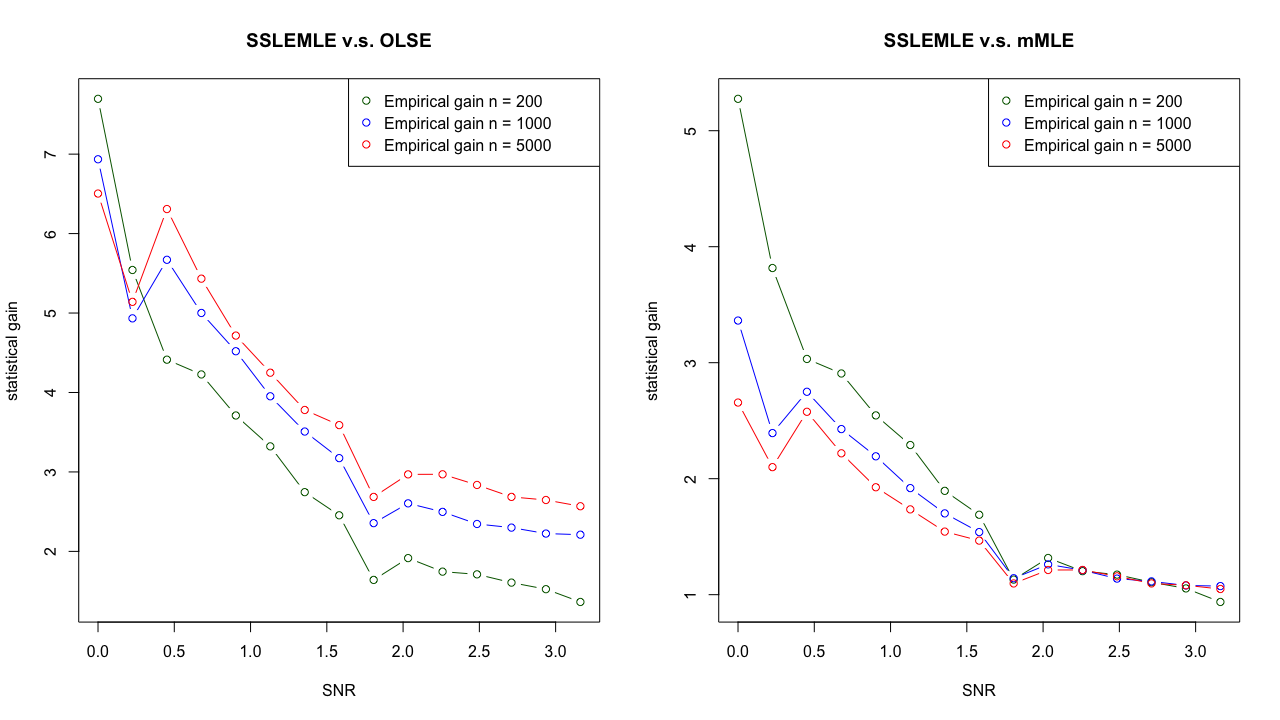}
        \caption{$\lambda = 0.2$}
    \end{subfigure}
    \vspace{0.4em}
    \begin{subfigure}{0.82\textwidth}
        \centering
        \includegraphics[width=\textwidth]{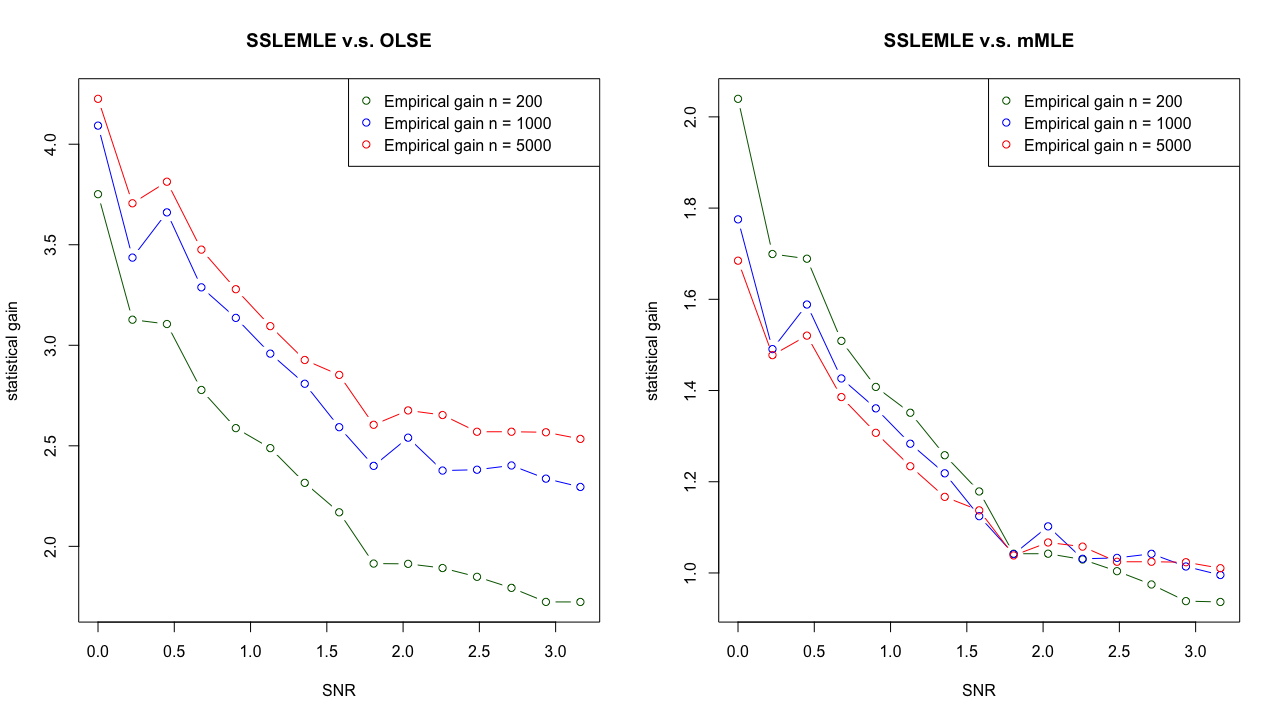}
        \caption{$\lambda = 0.6$}
    \end{subfigure}
     \caption{Results of simulation with $\epsilon \sim \text{Laplace}(0, \frac{0.8\sqrt{10}}{\sqrt{2}})$, $X \sim \mathcal{N}(5 \cdot \mathbf{1}_3,\mathbbm{1}_{3\times 3})$.}
    \label{fig:s6}
\end{figure}

\subsection{Data application}

In this section, we consider the combined cycle power plant data set from \cite{combined_cycle_power_plant_294}, also studied in \cite{monafadoua2024}. This data set consists of 9568 data points with ambient temperature (AT), atmospheric pressure (AP), relative humidity (RH) and exhaust vacuum (V) as the covariates and the net hourly electrical energy output (PE) as the response variable. Assuming that the conditional expectation of PE is a linear function of AT, V, AP and RH, we compute the OLSE with intercept using all the 9568 data points. The obtained multiple $R^2$ value is 0.9287 indicating that the model accounts for a significant part of the variability of the response. As shown in Figure \ref{fig:ccpp_TA_QQ}, the linear model with Gaussian noise seems to be quite suitable. From  the obtained residuals, the standard deviation of the noise can be estimated as $4.558$. Additionally, Figure \ref{fig:ccpp_residual} shows  that $\mathcal{N}(0, 4.558^2)$ provides a very good description of the noise distribution. Therefore, if $\varphi$ denotes the pdf of $\mathcal{N}(0,1)$, we treat the density of the noise as known and equal to $1/4.558\, \varphi(\cdot /4.558)$ in all the subsequent simulations. 

\begin{figure}[h]
    \centering
    \begin{subfigure}{0.49\textwidth}  
        \includegraphics[width=\textwidth]{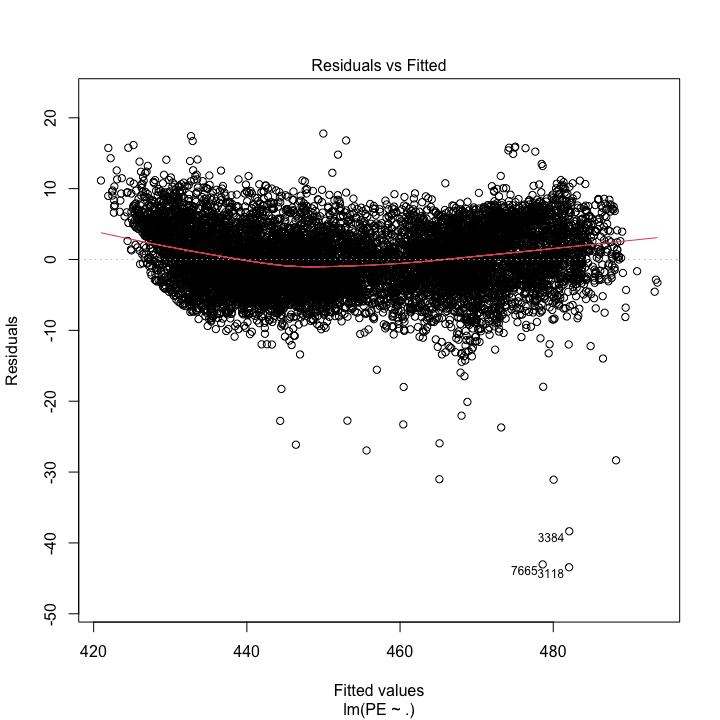} 
        \caption{TA plot}
    \end{subfigure}
    \hfill
    \begin{subfigure}{0.49\textwidth}
        \includegraphics[width=\textwidth]{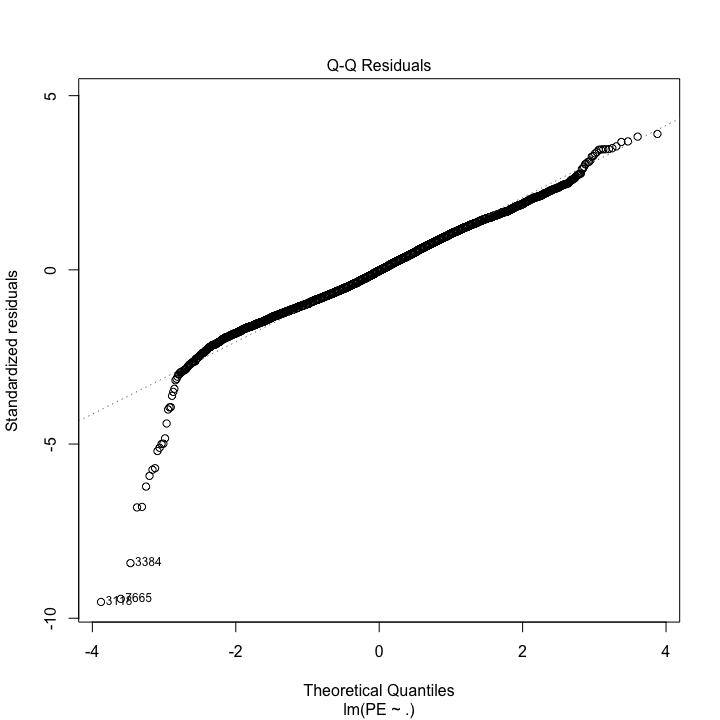} 
        \caption{QQ plot}
    \end{subfigure}
    \caption{The Tukey-Anscombe (TA)- and QQ-plots of the OLSE-fitted model using all 9568 data points from the power plant data set.}
    \label{fig:ccpp_TA_QQ}
\end{figure}

\begin{figure}[h]
    \centering
    \includegraphics[width=0.6\textwidth]{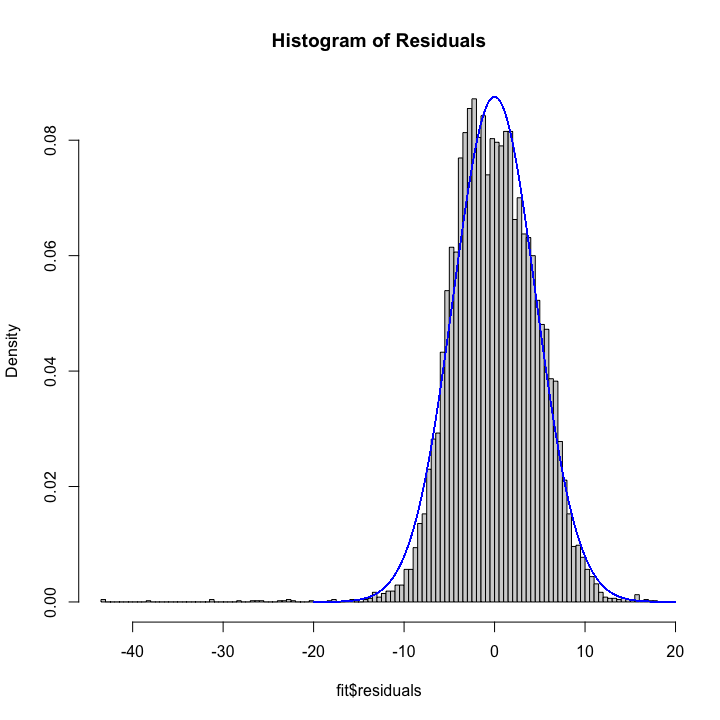} 
    \caption{Histogram of residuals from the OLSE-fitted model with the density function of $\mathcal{N}(0, 4.558^2)$ overlaid (blue).}
    \label{fig:ccpp_residual}
\end{figure}


In what follows, the OLSE will considered as the ground truth. We assume that the link between the response PE and the covariates is unknown, except for a very small subset of data points. In each of 1000 independent replications, we randomly split the whole data set into a training set of 7176 data points and a testing set of 2392 data points. Then, we randomly sample 10 matched data $(m=10)$ from the training set and $n \in \{50,100,200,400,800,1600\}$ unmatched data from the remaining data points in the training set. We compute SSLEMLE $\widehat{\beta}_{n,m}$'s and matched OLSE $\widehat{\beta}_m$'s, and evaluate the predictive performance of these estimators on the testing set by calculating the mean squared errors (MSEs). Note that the matched OLSE and matched MLE are equal in this case since the noise is assumed to be Gaussian.   Table \ref{Table: ccpp_MSE} shows that the predictive performance of SSLEMLE improves as $n$ increases. Figure \ref{fig:ccpp_MSE_ratios}  shows also that the ratio of the average MSE of SSLEMLE to that of the OLSE over the 1000 replications decreases as $n$ increases. Since the true model and the true parameter are unknown in any real data application, it is reasonable to use MSE to evaluate the estimators. A lower MSE in different testing sets generally indicates a better fit and stronger predictive power, and can be interpreted as an improvement gained by using the unmatched data compared to using only a small \lq\lq golden\rq\rq \ matched sample. In addition, Figure \ref{fig: ccpp_boxplots} shows that SSLEMLE becomes increasingly centered as $n$ grows. Note that in this figure, the results for the intercept are not shown as it is of order $\sim 450$, and hence it is hard to display it along with the other four and much smaller coefficients. We observe that the median and mean values of SSLEMLEs and OLSEs begin to show some misalignment as $n$ increases, although the differences are not too large. We suspect that this is due to a combination of model misspecification and minor computational errors in the optimization.

In the simulations we used a model with an intercept. The log-likelihood in this case is given by 
\begin{align*}
\ell_{n,m}(\beta) &=  \frac{1}{n+m}\sum_{j=1}^n \log\left(\frac{1}{n}  \sum_{i=1}^n f(\tilde Y_j - \beta_1 - \beta^\top_{2:(p+1)} \tilde{X}_i)  \right) \\
& \ + \frac{1}{n+m}\sum_{k=1}^m \log f(Y_k - \beta_1 - \beta^\top_{2:(p+1)} X_k)
\end{align*}
for $\beta \in \mathbb R^{p+1}$. Our asymptotic analysis can certainly be extended provided that the covariate admits an absolutely continuous distribution. However, the arguments will be much more involved because the intercept $\beta_1$ cannot be handled in the same way as the remaining coefficients in $\beta_{2:(p+1)}$ (which plays the same role as $\beta$ above). In fact, while the asymptotics for the matched MLE remain the same after replacing $X$ by  $(1, X^\top)^\top$,   $\beta_1$ has to be separated from the remaining coefficients when studying the SSLEMLE. In fact, adding $1$ as a covariate will violate absolute continuity, a very crucial assumption in all the proofs. 


\begin{table}[h]
    \centering
    \begin{tabular}{|c|cccccc|}
    \hline
    $n$  & 50 & 100 & 200 & 400 & 800 & 1600\\
    \hline
    count & 644  & 750 & 828 & 866 & 908  & 934 \\
    \hline
    \end{tabular}
    \vspace{0.5em} 
    \caption{Number of times (out of 1000 replications) that the SSLEMLE achieves lower values of the MSE than the OLSE on the testing set for different values of $n$ with fixed $m=10$.}
    \label{Table: ccpp_MSE}
\end{table}

\begin{figure}[h]
    \centering
    \includegraphics[width=0.65\textwidth]{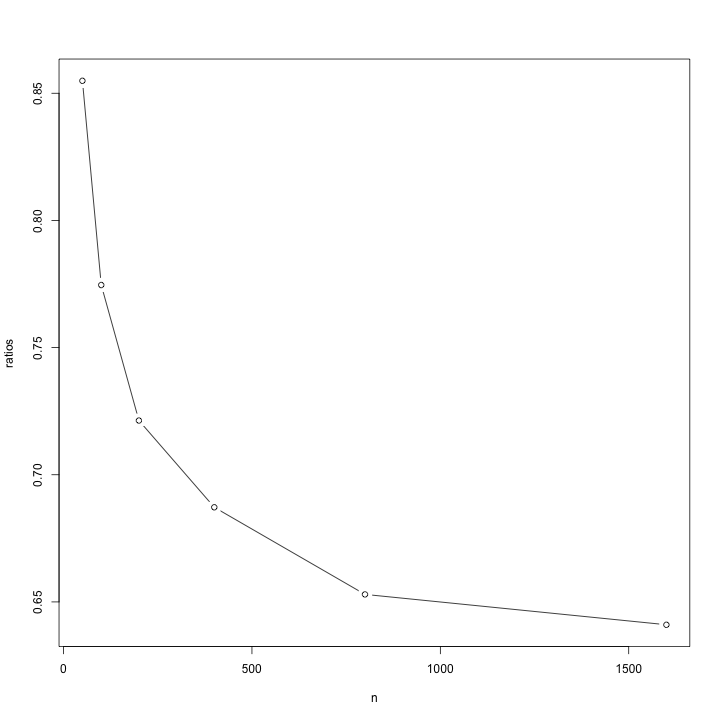} 
    \caption{Ratio of the average MSE of SSLEMLE to that of OLSE, where each average is computed over 1000 replications for different values of $n$ with fixed $m=10$.}
    \label{fig:ccpp_MSE_ratios}
\end{figure}

\begin{figure}[h]
    \centering
    \begin{subfigure}[b]{0.45\textwidth}
        \includegraphics[width=\textwidth]{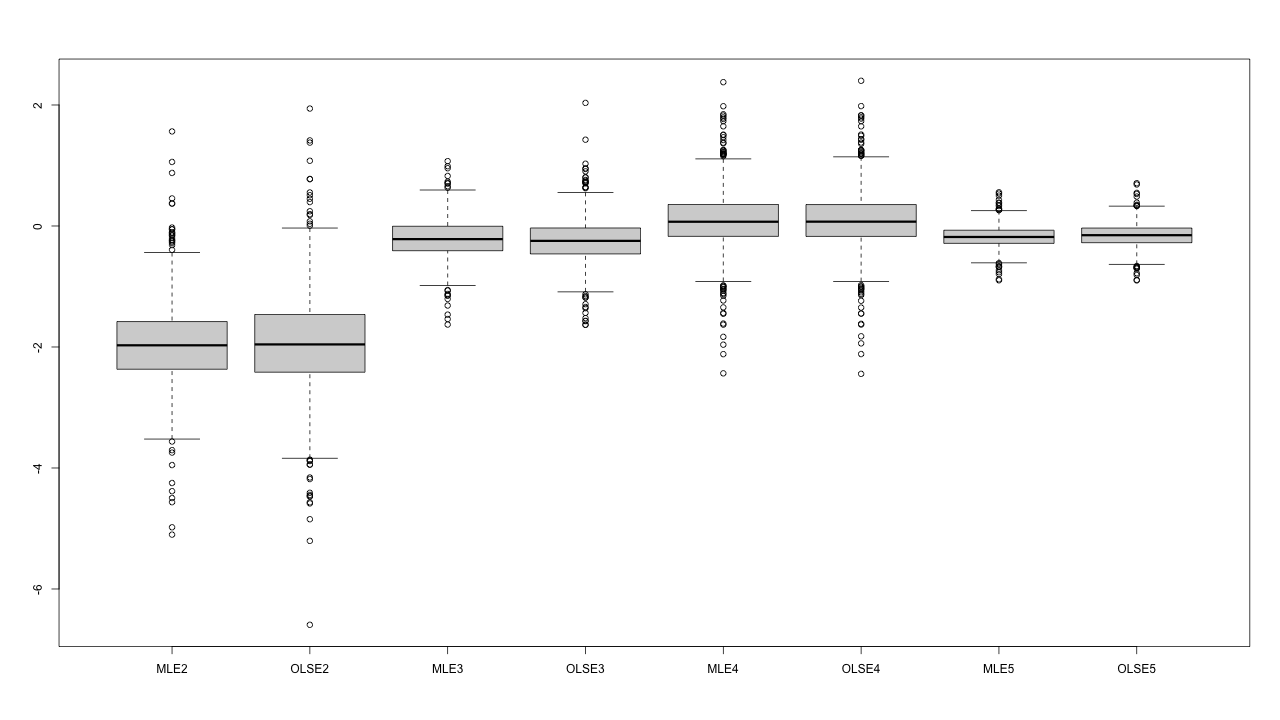}
        \caption{$n=50$}
    \end{subfigure}
    \hfill
    \begin{subfigure}[b]{0.45\textwidth}
        \includegraphics[width=\textwidth]{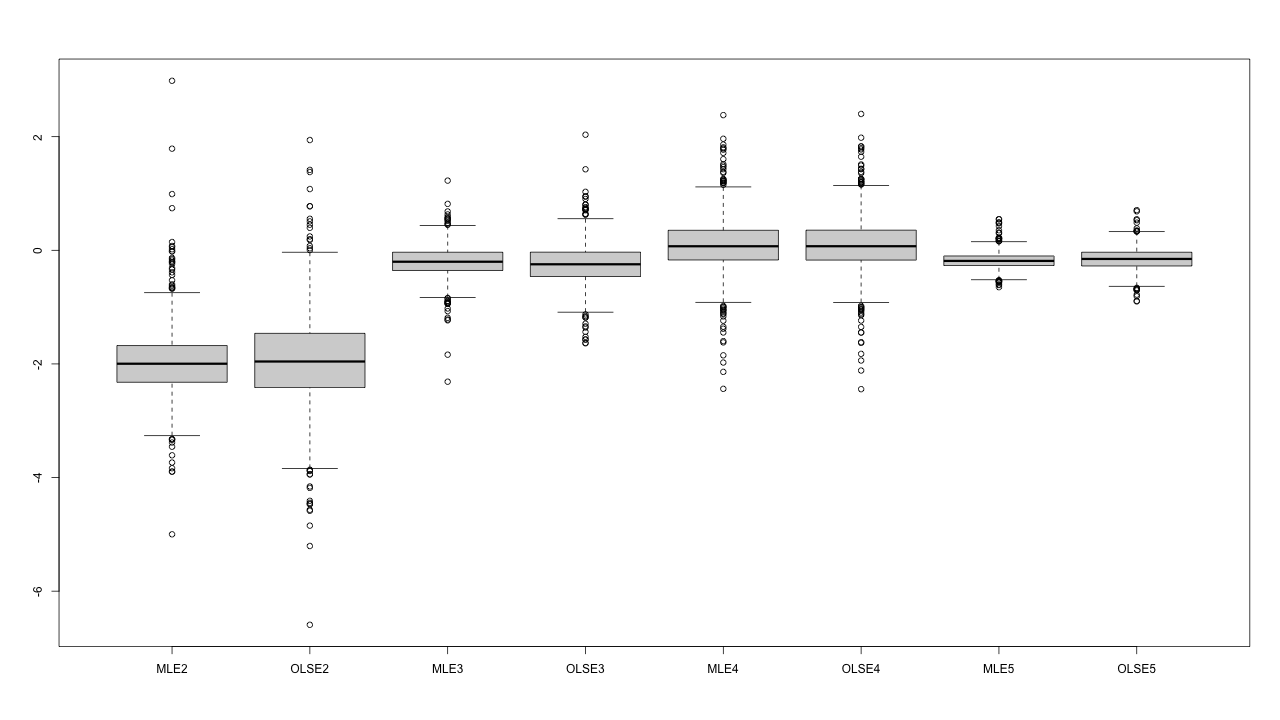}
        \caption{$n=100$}
    \end{subfigure}
    \vskip 0.5cm  
    \begin{subfigure}[b]{0.45\textwidth}
        \includegraphics[width=\textwidth]{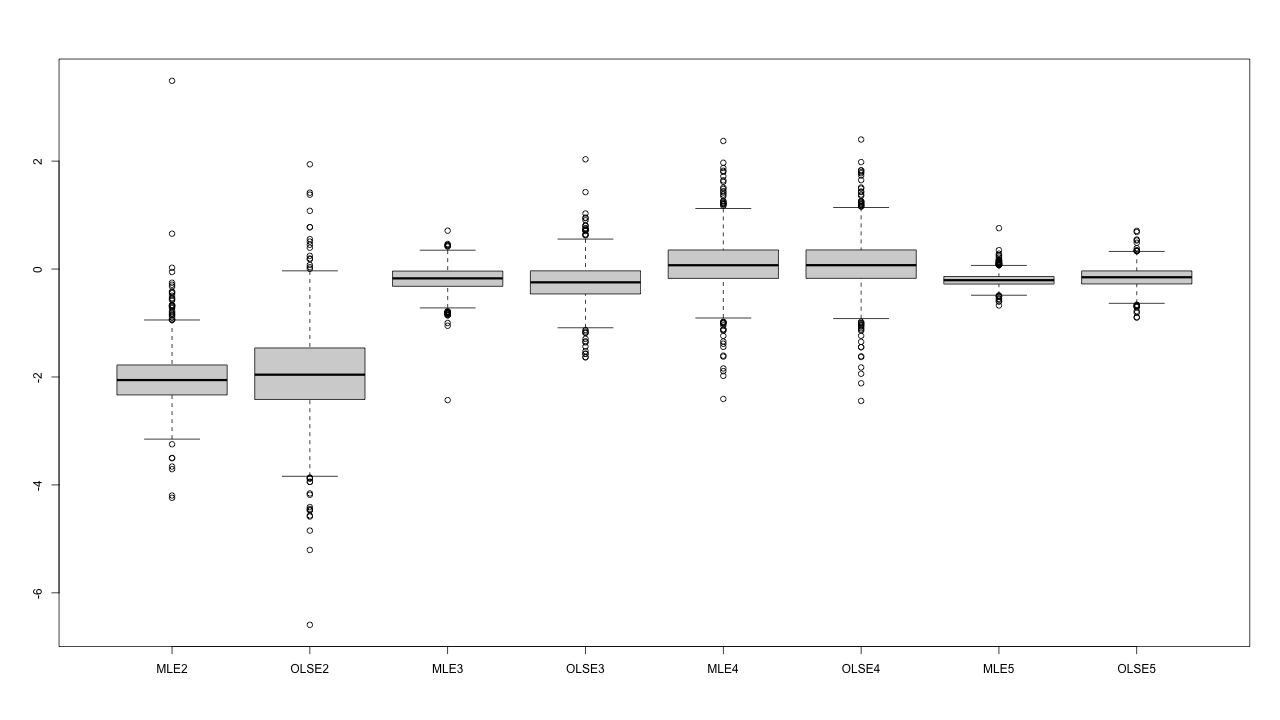}
        \caption{$n=200$}
    \end{subfigure}
    \hfill
    \begin{subfigure}[b]{0.45\textwidth}
        \includegraphics[width=\textwidth]{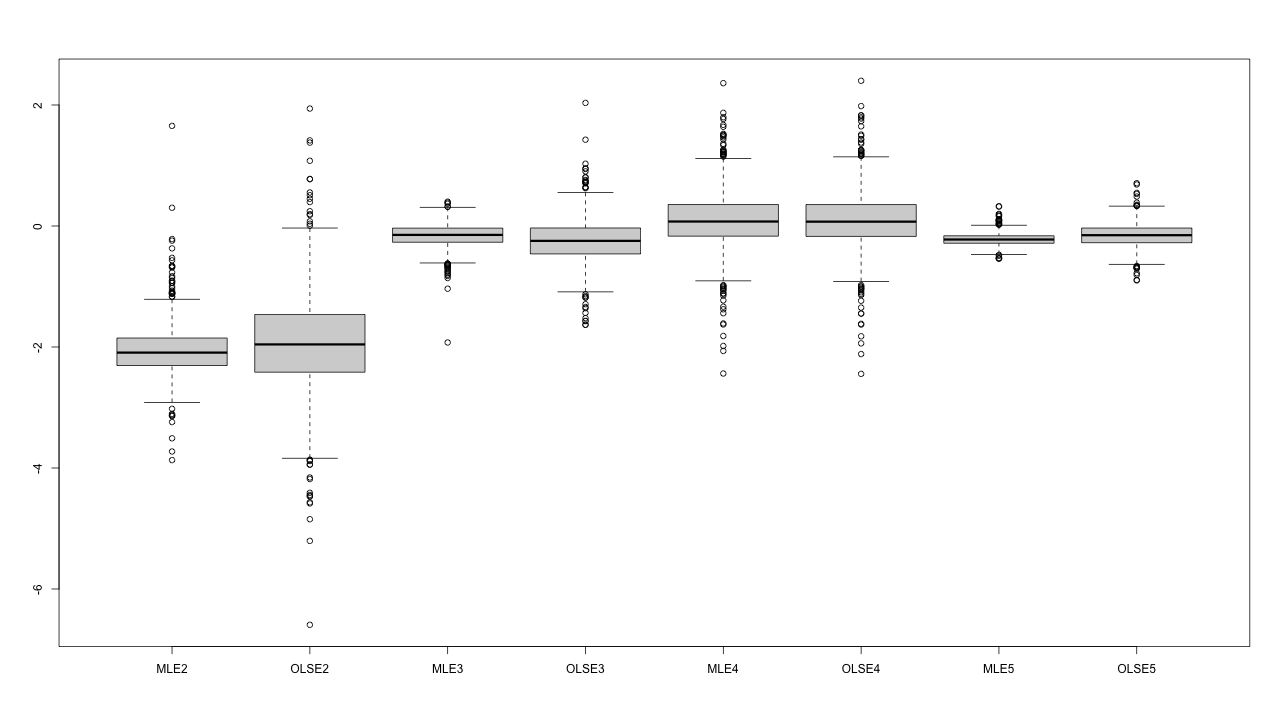}
        \caption{$n=400$}
    \end{subfigure}
    \vskip 0.5cm
    \begin{subfigure}[b]{0.45\textwidth}
        \includegraphics[width=\textwidth]{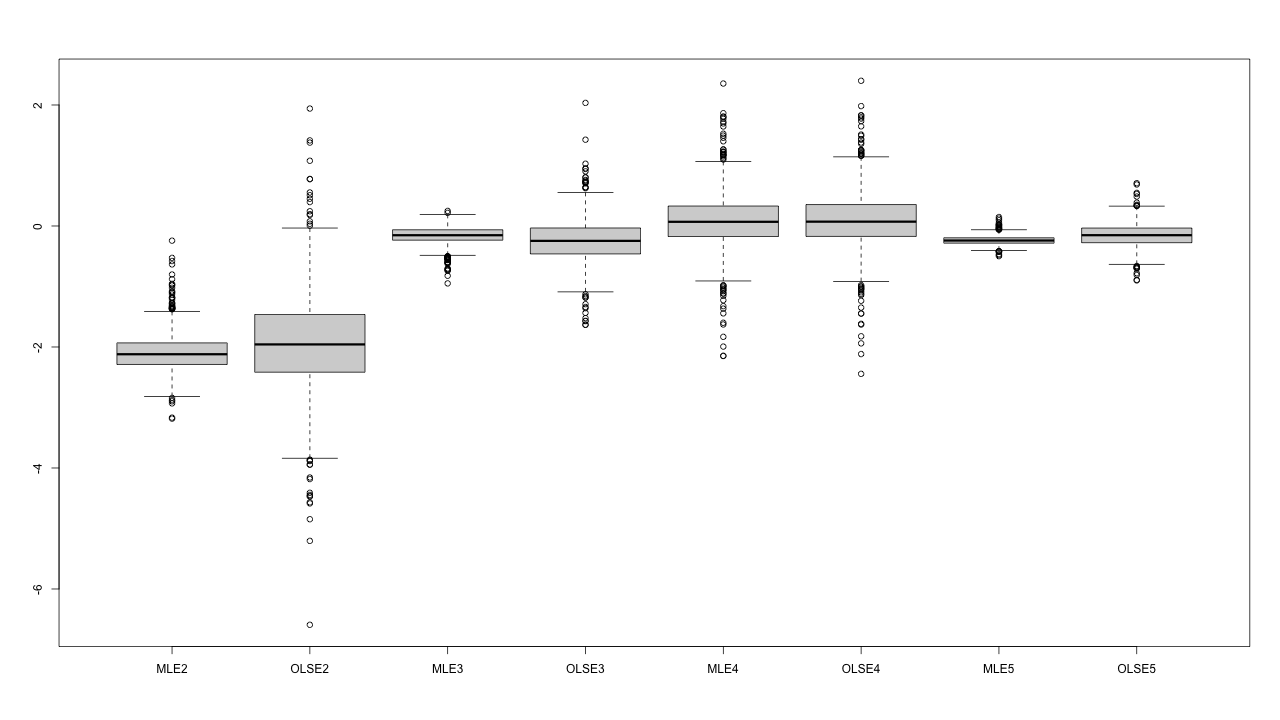}
        \caption{$n=800$}
    \end{subfigure}
    \hfill
    \begin{subfigure}[b]{0.45\textwidth}
        \includegraphics[width=\textwidth]{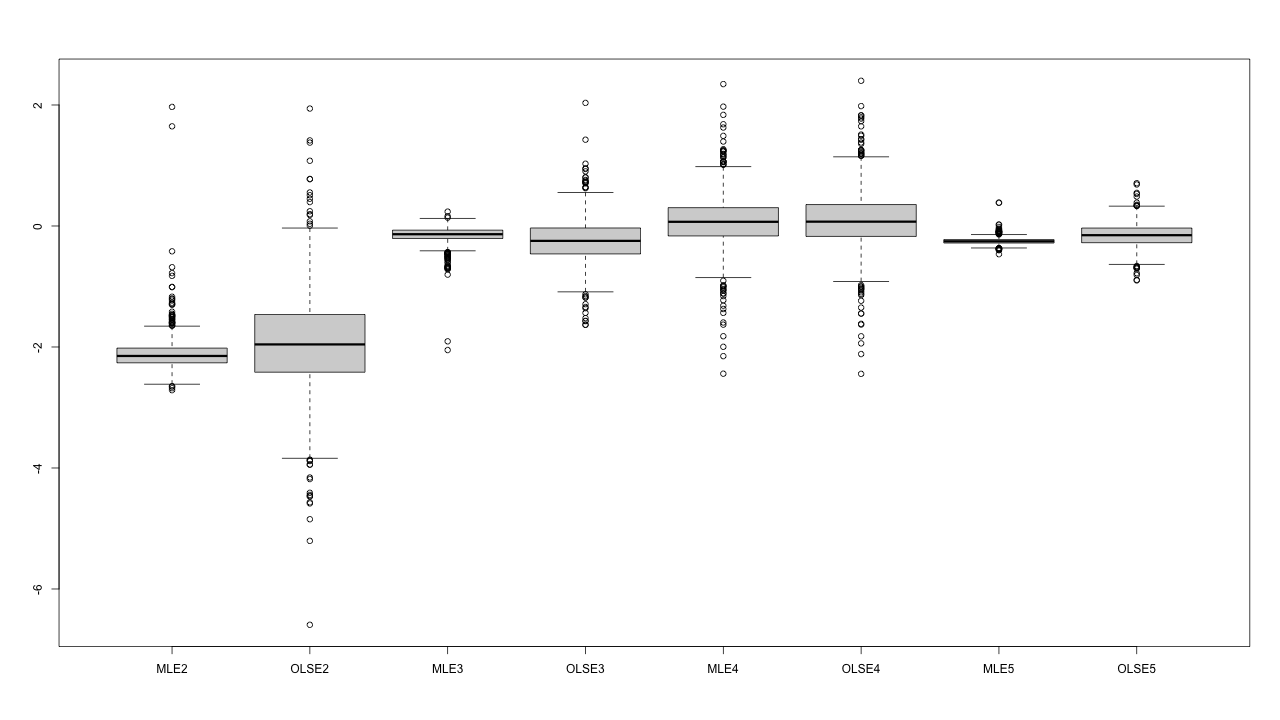}
        \caption{$n=1600$}
    \end{subfigure}
    \caption{Boxplots of SSLEMLE and OLSE coefficients (excluding the intercept) for different values of n.}
    \label{fig: ccpp_boxplots}
\end{figure}


\section{Conclusions and discussion}

In this paper, we introduced an estimator for the regression vector which is based on the empirical likelihood function constructed from both matched and unmatched samples.  The resulting SSLEMLE solves the semi-supervised learning problem in the scope of the linear model. The likelihood function simply combines the matched samples with the unmatched ones by taking the product of the likelihood functions for each part. 

Under the assumption that the ratio of the matched and unmatched sample sizes converges in the limit to some $\lambda \in (0,1)$ we were able to prove  existence and consistency of the proposed estimator and analyze its asymptotic properties. We also compared the proposed estimator to the MLE based only on the matched data and explicitly calculated the statistical gain when $X \sim \mathcal{N}(\mu,\Sigma_X)$ and $\epsilon\sim\mathcal{N}(0,\sigma_\epsilon^2)$. However, the proofs of consistency and asymptotic normality are based on some more strict assumptions.  Assumption (A3) requires that the covariate variable $X$ is compactly supported. Our Theorem \ref{gain} actually violates this assumption. However, we could show that the formulas of the statistical case, obtained by using involved algebra and diagonalization techniques, match very closely with our Monte Carlo simulations and hence are validated numerically.


Assumption (A0) imposes that the noise density is proportional to $\exp(- d^{-\alpha} \vert t \vert^\alpha),$ $\alpha > 0$. Such an assumption can be relaxed at the cost of much more complicated proofs.  Note that (A0) still encompasses many well-known probability models, including Laplace and Gaussian distributions. Assumption (A0) entails that the variance of the noise distribution is known. In practice, this is rarely the case. When the standard deviation of the noise variable is unknown, it needs to be included as a parameter of the combined likelihood function. In this case, the log-likelihood function is given by
\begin{align*}
    \ell_{n, m}(\beta,\sigma) & =  \frac{1}{n+m}  \sum_{j=1}^n \log \left( \frac{1}{n} \sum_{i=1}^n \frac{1}{\sigma}f\left(\frac{\tilde{Y}_j - \beta^\top \tilde{X}_i}{\sigma}\right)\right) \\
    & \ + \ \frac{1}{n+m}  \sum_{k=1}^m \log{\left( \frac{1}{\sigma}f\left(\frac{Y_k - \beta^\top X_k}{\sigma}\right)\right)},
\end{align*}
where $f(t) = \frac{\alpha}{2\Gamma(\frac{1}{\alpha})}\sqrt{\frac{\Gamma(3/\alpha)}{\Gamma(1/\alpha)}}\exp{\left(-\lvert t\rvert^\alpha \left(\frac{\Gamma(3/\alpha)}{\Gamma(1/\alpha)}\right)^\frac{\alpha}{2}\right)}$ is the standardized density function of the noise variable with variance of 1. 
In a future work, one may focus on studying this more complex problem. In practice, estimating the unknown variance using the matched data could be an easier approach since a natural estimate is the standard deviation of the residuals.  Alternatively, the obtained estimate can be used as an initial value for maximizing the new log-likelihood. 

It is worth noting that one important and tacit assumption in the problem we consider is that the parameters in the unmatched regression model are assumed to be the same as in the matched one. In fact, we assume that the covariates in the unmatched sample have the same distribution as those in the matched one. Also, it is assumed that we have the same regression vector $\beta_0$ in both models as well as the same noise distribution.  Thus, it is crucial that the unmatched sample reasonably reflects similar dependencies as in the matched one. Formal tests can be conducted in order to check whether this basic assumption is possibly violated.  For example,  one can first test whether the covariates and responses in the unmatched and matched samples have the same distributions.

Finally, we think that the theoretical findings of this paper may be extended to more general regression settings. For instance, one can consider combining matched and unmatched samples in the scope of a logistic regression model. In this case, the corresponding SSLEMLE is obtained by by maximizing the log-likelihood 
\begin{eqnarray}\label{logliklogistic}
&&\ell_{n, m}(\beta)\notag \\ 
&& = \frac{1}{n+m} \sum_{j=1}^n  \left \{ \tilde Y_j \log \left( \int \frac{e^{\beta^\top x}}  {1+e^{\beta^\top x}} d\mathbb F^{\tilde X}_n(x)\right) +   (1-\tilde Y_j) \log \left( \int \frac{1}{1+e^{\beta^\top x}} d\mathbb F^{\tilde X}_n(x)\right) \right \} \notag \\
&& \ \ + \ \frac{1}{n+m} \sum_{k=1}^m \left \{ Y_k \log\left(  \frac{e^{\beta^\top  X_k}}{1+e^{\beta^\top X_k}} \right)  + (1-Y_k) \log\left(\frac{1}{1+e^{\beta^\top X_k}} \right) \right\}. \notag \\
&&
\end{eqnarray}
where $\mathbb F^{\tilde X}_n$ is the empirical distribution of the unmatched covariates $\tilde X_i, i=1, \ldots, n$, $\tilde Y_j \in 
\{0,1\}, j =1, \ldots, n$ are the unmatched responses, and $(X_k, Y_k) \in \mathbb R^p \times \{0,1\}, \ k=1, \ldots, m$ the pairs in the matched sample.  While the matched part in $\ell_{n, m}(\beta)$ corresponds to the classical log-likelihood in a logistic regression model, the unmatched comes from writing the marginal distribution of $\tilde Y_j$:  For $\delta \in \{0,1\}$ we have that
\begin{eqnarray*}
\mathbb P(\tilde Y_j = \delta)  & = &  \int \mathbb P(\tilde Y_j = \delta | \tilde X = x) dF^{\tilde X}(x)   \\
& = &  \int \left(\frac{e^{\beta_0^\top x}}{1 + e^{\beta_0^\top x}}  \right)^\delta  \left(\frac{1}{1 + e^{\beta_0^\top x}}  \right)^{1-\delta} dF^{\tilde X}(x)
\end{eqnarray*}
where $F^{\tilde X}$ is the true distribution of  $\tilde X \stackrel{d}{=} \tilde X_i, i =1, \ldots, n$. As in the linear model investigated here, a maximizer of the log-likelihood in (\ref{logliklogistic}) for logistic regression is expected to be associated with a better performance than the MLE based on the matched sample alone. In the very simple setting of dimension 1, we computed the SSLEMLE for Gaussian covariates with mean and variance both equal to $1$ and true regression coefficient $\beta_0 =2$. In Figure \ref{fig: gainlogistic}, we plot the estimated statistical gain based on 100 replications versus $\log_{10}(m/n)$, where $m=100$ is held fixed and $n \in \{100, 500, 1000, 5000, 10 000, 50 000, 100 000 \}$. The magnitude of improvement for large $n$ or equivalently small ratios $m/n$ is certainly quite promising. 
\begin{figure}[!h]
    \centering
    \begin{subfigure}{0.48\textwidth} 
        \centering
        \includegraphics[width=\textwidth]{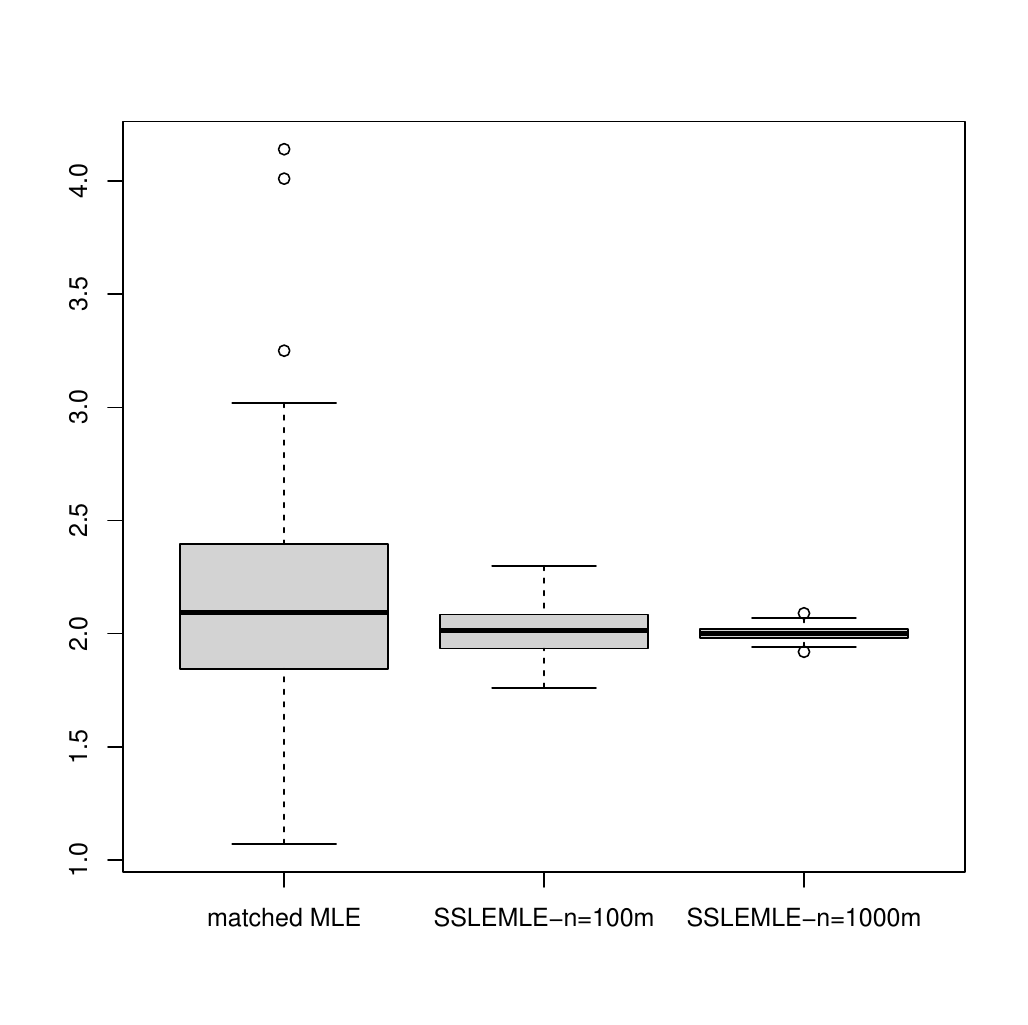}
        \caption{}
    \end{subfigure}
    \vspace{0.4em}
    \begin{subfigure}{0.48\textwidth}
        \centering
        \includegraphics[width=\textwidth]{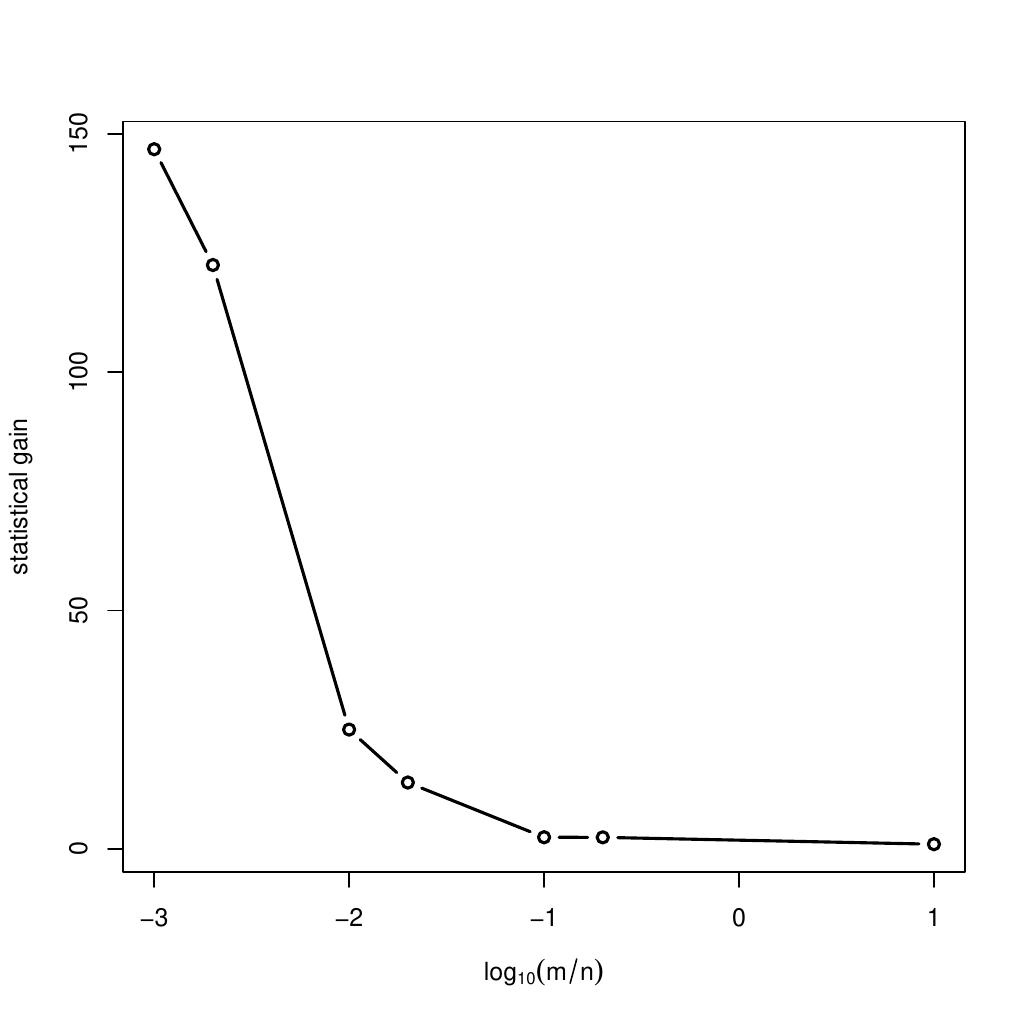}
        \caption{}
    \end{subfigure}
     \caption{ Boxplots of the matched MLE and SSLEMLE's for $n =100m$ and $n=1000m$ (a) and the empirical statistical gain for the SSLEMLE over the matched MLE (b) in a univariate logistic regression model with Gaussian covariates with mean and variance equal to $1$. The true regression coefficient is $\beta_0 =2$ and the number of replications used to estimate the gain was chosen to be 100.}
    \label{fig: gainlogistic}
\end{figure}
Investigating the asymptotic properties of the SSLEMLE and the associated statistical gain may require in this case developing different techniques from empirical process theory as those employed for the linear model. This will be studied in detail in the scope of another research work.

\section*{Appendix A: Proofs of the main results}

\subsection{Proofs for Section \ref{SSLEMLE}}

\begin{proof}[Proof of Lemma \ref{Lemma: Existence of MLE for fixed sample case}]
        Under the assumption that the density function of $\epsilon$ has the form $f(t) = c_\alpha\exp{(-d^{-\alpha} \left\lvert t\right\rvert^\alpha)}$, we have that
\begin{align*}
-\ell_{n, m}(\beta) &= \frac{1}{n+m} \left(-\sum_{j=1}^n \log{\left(\sum_{i=1}^n \frac{1}{n}f^\epsilon(\Tilde{Y}_j - \beta^\top \Tilde{X}_i) \right)} - \sum_{k=1}^m \log{f^\epsilon(Y_k - \beta^\top X_k)}\right) \\
    &= \frac{1}{n+m} \left(-\sum_{j=1}^n \log{\left(\sum_{i=1}^n \frac{1}{n} c_\alpha \exp{(-d^{-\alpha} \lvert \Tilde{Y}_j - \beta^\top \Tilde{X}_i \rvert ^\alpha)}\right)} - \sum_{k=1}^m \log{\left(c_\alpha \exp{(-d^{-\alpha} \lvert Y_k - \beta^\top X_k \rvert ^\alpha)}\right)}\right) \\
    &= \frac{1}{n+m} \left(-\sum_{j=1}^n \log{\left(\sum_{i=1}^n \frac{c_\alpha}{n}  \exp{(-d^{-\alpha} \lvert \Tilde{Y}_j - \beta^\top \Tilde{X}_i \rvert ^\alpha)}\right)} - m \log{c_\alpha} + d^{-\alpha} \sum_{k=1}^m \lvert Y_k - \beta^\top X_k \rvert ^\alpha\right) \\
\end{align*}
We have $-d^{-\alpha} \lvert \Tilde{Y}_j - \beta^\top \Tilde{X}_i \rvert ^\alpha \leq 0$. Then it follows $0 < \exp{(-d^{-\alpha} \lvert \Tilde{Y}_j - \beta^\top \Tilde{X}_i \rvert ^\alpha)} \leq 1$. Thus, it holds that
\[
-\sum_{j=1}^n \log{\left(\sum_{i=1}^n \frac{c_\alpha}{n}  \exp{(-d^{-\alpha} \lvert \Tilde{Y}_j - \beta^\top \Tilde{X}_i \rvert ^\alpha)}\right)} \geq -n\log{c_\alpha}
\]
and hence
\[
-\ell_{n, m}(\beta) \geq - \log{c_\alpha} + \frac{d^{-\alpha}}{n+m}\sum_{k=1}^m \lvert Y_k - \beta^\top X_k \rvert ^\alpha
\]
Define $A^\ast = \inf_{u \in \mathcal{S}^{p-1}} \sum_{k=1}^m \left\lvert u^\top X_k\right\rvert^\alpha$ with $\mathcal{S}^{p-1} = \{u \in \mathbb{R}^p: \left\lVert u\right\rVert_2=1\}$. $A^\ast$ exists and is attained on $\mathcal{S}^{p-1}$ since the latter is a compact subset of $\mathbb{R}^p$ and the function $u \mapsto \sum_{k=1}^m \left\lvert u^\top X_k\right\rvert^\alpha$ is continuous. Since $\operatorname{rank}(M) = p$,  $Mu = 0$ if and only if $u = 0$. This means that $\sum_{k=1}^m \left\lvert u^\top X_k\right\rvert^\alpha = \left\lVert Mu\right\rVert_\alpha^\alpha= 0$ if and only if $u = 0$. So we have $A^\ast > 0$. 
Now, for all $\beta\in\{b \in \mathbb{R}^p: \left\lVert b\right\rVert > R\}$ with 
\begin{eqnarray*}
R := \left( \frac{2^{\alpha-1}}{A^\ast} \sum_{j=1}^n \left\lvert \Tilde{Y}_j \right\rvert^\alpha +\frac{2^{\alpha}}{A^\ast} \sum_{k=1}^m \left\lvert Y_k \right\rvert^\alpha \right)^{\frac{1}{\alpha}}
\end{eqnarray*}
we have that
\begin{align*}
    \left\lVert\beta\right\rVert^\alpha >  \frac{2^{\alpha-1}}{A^\ast} \sum_{j=1}^n \left\lvert \Tilde{Y}_j \right\rvert^\alpha +\frac{2^{\alpha}}{A^\ast} \sum_{k=1}^m \left\lvert Y_k \right\rvert^\alpha 
\end{align*}
and therefore
\begin{align*}
    \left\lVert\beta\right\rVert^\alpha A^\ast >  2^{\alpha-1}\sum_{j=1}^n \left\lvert \Tilde{Y}_j \right\rvert^\alpha + 2^{\alpha} \sum_{k=1}^m \left\lvert Y_k \right\rvert^\alpha
\end{align*}
Since $\beta = \left\lVert\beta\right\rVert u_{\beta}$ for some $u_\beta \in \mathcal{S}^{p-1}$, we have then 
\begin{align*}
    \sum_{k=1}^m \left\lvert \beta^\top X_k\right\rvert^\alpha &= \sum_{k=1}^m \left\lvert \left\lVert\beta\right\rVert u_\beta^\top X_k\right\rvert^\alpha \\
    &= \left\lVert\beta\right\rVert^\alpha \sum_{k=1}^m \left\lvert u_\beta^\top X_k\right\rvert^\alpha \geq \left\lVert\beta\right\rVert^\alpha \inf_{u \in \mathcal{S}^{p-1}} \sum_{k=1}^m \left\lvert u^\top X_k\right\rvert^\alpha  =\left\lVert\beta\right\rVert^\alpha A^\ast
\end{align*} 
which implies that
\begin{align*}
    \sum_{k=1}^m \left\lvert \beta^\top X_k\right\rvert^\alpha > 2^{\alpha-1}\sum_{j=1}^n \left\lvert \Tilde{Y}_j \right\rvert^\alpha + 2^{\alpha} \sum_{k=1}^m \left\lvert Y_k \right\rvert^\alpha
\end{align*}
or equivalently
\begin{align*}
    2^{1-\alpha}\sum_{k=1}^m \left\lvert \beta^\top X_k\right\rvert^\alpha > \sum_{j=1}^n \left\lvert \Tilde{Y}_j \right\rvert^\alpha + 2\sum_{k=1}^m \left\lvert Y_k \right\rvert^\alpha.
\end{align*}
It follows that
\begin{align*}
    \sum_{k=1}^m  \Big(2^{1-\alpha}\left\lvert\beta^\top X_k\right\rvert^\alpha - \left\lvert Y_k \right\rvert^\alpha \Big) > \sum_{j=1}^n \left\lvert \Tilde{Y}_j \right\rvert^\alpha + \sum_{k=1}^m \left\lvert Y_k \right\rvert^\alpha.
\end{align*}
Convexity of the function $u \mapsto \vert u \vert^\alpha$ implies that $\left\vert Y_k - \beta^\top X_k\right\rvert^\alpha \geq 2^{1-\alpha}\left\lvert\beta^\top X_k\right\rvert^\alpha - \left\lvert Y_k \right\rvert^\alpha$, and therefore
\begin{align*}
    \sum_{k=1}^m  \left\vert Y_k - \beta^\top X_k\right\rvert^\alpha > \sum_{j=1}^n \left\lvert \Tilde{Y}_j \right\rvert^\alpha + \sum_{k=1}^m \left\lvert Y_k \right\rvert^\alpha.
\end{align*}
This is equivalent to writing that
\begin{align*}
    - \log{c_\alpha} + \frac{d^{-\alpha}}{n+m}\sum_{k=1}^m \lvert Y_k - \beta^\top X_k \rvert ^\alpha > - \log{c_\alpha} + \frac{d^{-\alpha}}{n+m}\left(\sum_{j=1}^n \left\lvert \Tilde{Y}_j \right\rvert^\alpha + \sum_{k=1}^m \left\lvert Y_k \right\rvert^\alpha\right).
\end{align*}
On the other hand, we have that
\begin{align*}
    -\ell_{n, m}(0) &= \frac{1}{n+m} \left(-\sum_{j=1}^n \log{\left(\sum_{i=1}^n \frac{c_\alpha}{n}  \exp{(-d^{-\alpha} \lvert \Tilde{Y}_j \rvert ^\alpha)}\right)} - m \log{c_\alpha} + d^{-\alpha} \sum_{k=1}^m \lvert Y_k \rvert ^\alpha\right) \\
    &=\frac{1}{n+m} \left(-\sum_{j=1}^n \left(\log{c_\alpha}-d^{-\alpha} \lvert \Tilde{Y}_j \rvert ^\alpha \right)- m \log{c_\alpha} + d^{-\alpha} \sum_{k=1}^m \lvert Y_k  \rvert ^\alpha\right) \\
    &= \frac{1}{n+m} \left(- n \log{c_\alpha}+\sum_{j=1}^n d^{-\alpha} \lvert \Tilde{Y}_j \rvert ^\alpha - m \log{c_\alpha} + d^{-\alpha} \sum_{k=1}^m \lvert Y_k  \rvert ^\alpha\right) \\
    &= - \log{c_\alpha} + \frac{d^{-\alpha}}{n+m}\left(\sum_{j=1}^n \left\lvert \Tilde{Y}_j \right\rvert^\alpha + \sum_{k=1}^m \left\lvert Y_k \right\rvert^\alpha\right).
\end{align*}
It follows from the calculations above that
\begin{eqnarray*}
-\ell_{n,m}(\beta) \geq - \log{c_\alpha} + \frac{d^{-\alpha}}{n+m}\sum_{k=1}^m \lvert Y_k - \beta^\top X_k \rvert ^\alpha &>& - \log{c_\alpha} + \frac{d^{-\alpha}}{n+m}\left(\sum_{j=1}^n \left\lvert \Tilde{Y}_j \right\rvert^\alpha + \sum_{k=1}^m \left\lvert Y_k \right\rvert^\alpha\right) \\
&= & -\ell_{n,m}(0).
\end{eqnarray*}
We conclude that $-\ell_{n,m}(\beta) > -\ell_{n,m}(0)$ for any $\beta: \Vert \beta \Vert > R$.
Now, consider the closed ball  $\overline{\mathcal{B}}(0,R) := \{\beta\in\mathbb{R}^p: \left\lVert \beta\right\rVert \leq R\}$. Since $\beta \mapsto -\ell_{n,m}(\beta)$ is continuous, this function attains its  minimum on $\overline{\mathcal{B}}(0,R)$ at some vector $\beta^\ast \in \overline{\mathcal{B}}(0,R)$ such that 
\[
-\ell_{n,m}(\beta^\ast) \leq  -\ell_{n,m}(\beta) \quad \text{for all $\beta \in \overline{\mathcal{B}}(0,R)$}
\]
Since $0 \in \overline{\mathcal{B}}(0,R)$,  we have that
\[
-\ell_{n,m}(\beta^\ast) \leq  -\ell_{n,m}(0).
\]
Since for all $\beta \in \{b \in\mathbb{R}^p: \left\lVert b \right\rVert >R\}$
\[
-\ell_{n,m}(\beta^\ast) \leq  -\ell_{n,m}(0) < -\ell_{n,m}(\beta)
\]
it follows that $\beta^\ast$ is a minimizer of $\beta \mapsto -\ell_{n,m}(\beta)$ over $\mathbb{R}^p$. Thus, a maximizer of $\beta \mapsto \ell_{n,m}(\beta)$ exists.
    \end{proof}

\begin{proof}[Proof of Lemma \ref{Lemma: Existence of MLE for asymptotic sample case}]
Recall $R$ from (\ref{R}).  Note that
\begin{align*}
    R &= \left( \frac{2^{\alpha-1}}{\inf_{u \in \mathcal{S}^{p-1}} \sum_{k=1}^m \left\lvert u^\top X_k\right\rvert^\alpha} \sum_{j=1}^n \left\lvert \Tilde{Y}_j \right\rvert^\alpha +\frac{2^{\alpha}}{\inf_{u \in \mathcal{S}^{p-1}} \sum_{k=1}^m \left\lvert u^\top X_k\right\rvert^\alpha} \sum_{k=1}^m \left\lvert Y_k \right\rvert^\alpha \right)^{\frac{1}{\alpha}} \\
    &=\left(\frac{2^{\alpha-1}}{\inf_{u \in \mathcal{S}^{p-1}} \frac{1}{n+m}\sum_{k=1}^m \left\lvert u^\top X_k\right\rvert^\alpha}\right)^{\frac{1}{\alpha}}\left(\frac{1}{n+m}\sum_{j=1}^n \left\lvert \Tilde{Y}_j \right\rvert^\alpha  + \frac{2}{n+m}\sum_{k=1}^m \left\lvert Y_k \right\rvert^\alpha\right)^{\frac{1}{\alpha}}
\end{align*}
Since $\lim_{m, n \to \infty} \frac{m}{n} = \lambda \in (0,1)$, we have then by SLLN and the continuous mapping theorem
\begin{align*}
    \inf_{u \in \mathcal{S}^{p-1}} \frac{1}{n+m}\sum_{k=1}^m \left\lvert u^\top X_k\right\rvert^\alpha &= \frac{1}{1+\frac{n}{m}}\inf_{u \in \mathcal{S}^{p-1}} \frac{1}{m}\sum_{k=1}^m \left\lvert u^\top X_k\right\rvert^\alpha\\
    &\overset{m, n \rightarrow \infty}{\longrightarrow}\frac{1}{1+\frac{1}{\lambda}}\inf_{u \in \mathcal{S}^{p-1}} \mathbb{E}\left[\left\lvert u^\top X\right\rvert^\alpha\right],  \ \text{with probability 1} \\
    &=\frac{1}{1+\frac{1}{\lambda}}\inf_{u \in \mathcal{S}^{p-1}} \int\left\lvert u^\top x\right\rvert^\alpha f^X(x)\,dx.
\end{align*}
This means that with probability 1 there exists an integer $m_0\geq 1$ such that for all $n,m \geq m_0$:
\begin{align*}
    \inf_{u \in \mathcal{S}^{p-1}} \frac{1}{n+m}\sum_{k=1}^m \left\lvert u^\top X_k\right\rvert^\alpha \geq \frac{1}{1+\frac{1}{\lambda}}\inf_{u \in \mathcal{S}^{p-1}} \int\left\lvert u^\top x\right\rvert^\alpha f^X(x)\,dx - \delta
\end{align*}
for a fixed $\delta > 0$.  If we choose $\delta = \frac{1}{2}\cdot\frac{1}{1+\frac{1}{\lambda}}\inf_{u \in \mathcal{S}^{p-1}} \int\left\lvert u^\top x\right\rvert^\alpha f^X(x)\,dx$, then for all $n, m\geq m_0$ we have that
\[
\inf_{u \in \mathcal{S}^{p-1}} \frac{1}{n+m}\sum_{k=1}^m \left\lvert u^\top X_k\right\rvert^\alpha \geq \frac{\lambda}{2(1+\lambda)}\inf_{u \in \mathcal{S}^{p-1}} \int\left\lvert u^\top x\right\rvert^\alpha f^X(x)\,dx
\]
Applying again the SLLN we have
\begin{align*}
    \frac{1}{n+m}\sum_{j=1}^n \left\lvert \Tilde{Y}_j \right\rvert^\alpha &= \frac{1}{1+\frac{m}{n}} \frac{1}{n} \sum_{j=1}^n \left\lvert \Tilde{Y}_j \right\rvert^\alpha\\
    &\overset{m, n \rightarrow\infty}{\longrightarrow}\frac{1}{1+\lambda} \mathbb{E}\left[|\Tilde{Y}|^\alpha\right] \\
    &=\frac{1}{1+\lambda} \int|y|^\alpha f^{\Tilde{Y}}(y)\,dy = \frac{1}{1+\lambda} \int|y|^\alpha f^\epsilon(y-\beta_0^\top x)f^X(x)\,dxdy
\end{align*}
with probability 1. This means that with probability 1, there exists an integer $m_1 \ge 1$ such that for all $n,m\geq m_1$:
\[
\frac{1}{n+m}\sum_{j=1}^n \left\lvert \Tilde{Y}_j \right\rvert^\alpha \leq \frac{3}{1+\lambda} \int|y|^\alpha f^\epsilon(y-\beta_0^\top x)f^X(x)\,dxdy
\]
Also, by the same theorem we have that
\begin{align*}
    \frac{1}{n+m}\sum_{k=1}^m \left\lvert Y_k \right\rvert^\alpha &= \frac{1}{1+\frac{n}{m}} \frac{1}{m} \sum_{k=1}^m \left\lvert Y_k \right\rvert^\alpha\\
    &\overset{m, n \rightarrow\infty}{\longrightarrow}\frac{1}{1+\frac{1}{\lambda}} \mathbb{E}\left[|Y|^\alpha\right] \\
    &= \frac{1}{1+\frac{1}{\lambda}}\int|y|^\alpha f^\epsilon(y-\beta_0^\top x)f^X(x)\,dxdy
\end{align*}
with probability 1. This means that with probability 1 there exists an integer $m_2 \ge 1$ such that for all $n, m \geq m_2$:
\begin{eqnarray*}
\frac{1}{n+m}\sum_{k=1}^m \left\lvert Y_k \right\rvert^\alpha \leq \frac{3 \lambda}{2(1+\lambda)}\int|y|^\alpha f^\epsilon(y-\beta_0^\top x)f^X(x)\,dxdy
\end{eqnarray*}
Put 
$$
B = \inf_{u \in \mathcal{S}^{p-1}} \int\left\lvert u^\top x\right\rvert^\alpha f^X(x)\,dx
$$
and
$$
C = \int|y|^\alpha f^\epsilon(y-\beta_0^\top x)f^X(x)\,dxdy.
$$
Then, for all $n, m \geq m^\ast :=\max(m_0,m_1,m_2)$, it holds that 
\begin{align}\label{R*}
    R &= \left(\frac{2^{\alpha-1}}{\inf_{u \in \mathcal{S}^{p-1}} \frac{1}{n+m}\sum_{k=1}^m \left\lvert u^\top X_k\right\rvert^\alpha}\right)^{\frac{1}{\alpha}}\left(\frac{1}{n+m}\sum_{j=1}^n \left\lvert \Tilde{Y}_j \right\rvert^\alpha  + \frac{2}{n+m}\sum_{k=1}^m \left\lvert Y_k \right\rvert^\alpha\right)^{\frac{1}{\alpha}} \notag \\
    & \le \left( \frac{2^{\alpha}  (1+\lambda)}{\lambda  B}\right)^{1/\alpha} (3C)^{1/\alpha} = 2\cdot 3^{1/\alpha} \left(\frac{1+\lambda}{\lambda}\cdot\frac{C}{B}\right)^{1/\alpha}:= R^\ast.
\end{align}
This means that with probability 1 there exists an integer $m^{\ast} \ge 1$ such that for all $n,m \geq m^\ast$ we have that 
$$\{\beta \in\mathbb{R}^p:\left\lVert \beta\right\rVert > R^\ast\} \subseteq \{\beta\in\mathbb{R}^p:\left\lVert\beta\right\rVert > R\}. $$
By Lemma \ref{Lemma: Existence of MLE for fixed sample case} we have: for all $n, m \geq m^\ast$: $-\ell_{n,m}(\beta) > -\ell_{n,m}(0)$ for all $\beta$ such that $\Vert \beta\Vert > R^\ast$. Using again continuity of the function $\beta \mapsto \ell_{n,m}(\beta)$ and compactness of closed balls, we conclude that with probability 1, there exists $m^\ast$ such that for all $n,m \ge m^\ast$ a maximizer of $\beta \mapsto \ell_{n,m}(\beta)$ belongs to the closed ball $\overline{\mathcal{B}}(0, R^\ast)$.

\end{proof}

\begin{proof}[Proof of Theorem \ref{Consis}]

To show the convergence in (\ref{UC}), we will start with proving that
\begin{eqnarray}\label{UC1}
\sup_{\beta \in \mathcal{B}(0, R^\ast)}  \left \vert \int \log f(y - \beta^\top x)  d(\mathbb P_m - \mathbb P)(x,y) \right \vert  = o_{\mathbb P}(1).
\end{eqnarray}
First, note that 
\begin{eqnarray*}
\log f(t) & = &  \log f(t) \mathds{1}_{t \ge 0}  +   \log f(t) \mathds{1}_{t < 0}  \\
& =  &  g_+(t)  + g_{-}(t)  -  \log(c_\alpha) 
\end{eqnarray*}
where $g_+(t)  =  \log f(t) \mathds{1}_{t \ge 0} + \log(c_\alpha) \mathds{1}_{t < 0} $ and $g_{-}(t) = \log f(t) \mathds{1}_{t < 0} + \log(c_\alpha) \mathds{1}_{t \ge 0}$.  The functions $g_+$ and $g_-$ are monotone non-increasing and non-decreasing respectively. Consider now the class of functions
\begin{eqnarray*}
\mathcal L=  \{(x, y) \mapsto  l_\beta(x, y) = y - \beta^\top x, \ \beta \in \overline{\mathcal B}(0, R^\ast)  \}.   
\end{eqnarray*}
The class $\mathcal{L}$ is indexed by $\beta$ and hence is a subset of the finite dimensional vector space
\begin{eqnarray*}
\{(x, y) \mapsto  l_\beta(x, y) = y - \beta^\top x, \ \beta \in \mathbb R \}.   
\end{eqnarray*}
From \cite[Lemma 2.6.16]{aadbookE2} it follows that $\mathcal L$ is a VC-subgraph of dimension $V \le p+2$. The convergence result in (\ref{UC1}) can be re-written as 
\begin{eqnarray*}
\sup_{l \in \mathcal{L}} \left \vert \int \left(g_+ \circ l + g_{-} \circ l - \log(c_\alpha)\right) d (\mathbb P_m - \mathbb P)  \right \vert  = o_{\mathbb P}(1).
\end{eqnarray*}
Using the fact that $\int d (\mathbb P_m - \mathbb P) =0$, it follows that 
$$
\int \left(g_+ \circ l + g_{-} \circ l - \log(c_\alpha)\right) d (\mathbb P_m - \mathbb P) =  \int \left(g_+ \circ l + g_{-} \circ l \right) d (\mathbb P_m - \mathbb P)
$$
and hence
\begin{eqnarray*}
\sup_{l \in \mathcal{L}} \left \vert \int \left(g_+ \circ l + g_{-} \circ l - \log(c_\alpha)\right) d (\mathbb P_m - \mathbb P)  \right \vert  &\le &  \sup_{l \in \mathcal{L}} \left \vert \int g_+ \circ l \  d (\mathbb P_m - \mathbb P)  \right \vert \\
&& \ +  \sup_{l \in \mathcal{L}} \left \vert \int g_{-} \circ l \  d (\mathbb P_m - \mathbb P)  \right \vert.
\end{eqnarray*}
\cite[Lemma 2.6.20 - (viii)]{aadbookE2} implies that the class $g_+ \circ \mathcal L$ is a VC-subgraph.  Let us denote by $V_+$ its index. Also, for all $t  \in \mathbb R$  $\vert g_+ \vert(t) \le \vert \log(c_\alpha) \vert + d^{-\alpha} \vert t \vert^\alpha$ and hence
\begin{eqnarray*}
\vert g_+(y - \beta^\top x)  \vert & \le &   \vert \log(c_\alpha) \vert + d^{-\alpha} \vert y - \beta^\top x \vert^\alpha  \\
& \le &  \vert \log(c_\alpha) \vert + 2^{\alpha -1} d^{-\alpha} \left(\vert y \vert^\alpha  + C^\alpha   \right):= G_+(x, y) 
\end{eqnarray*}
a.e. $\mathbb P$ with $C = R^\ast B$. Above, we used the Cauchy-Schwarz inequality and convexity of $t \mapsto \vert t \vert^\alpha$.  Denoting the class $g_+ \circ \mathcal L$ by $\mathcal{G}_+$, it follows from \cite[Theorem 2.6.7]{aadbookE2} that for $\delta \in (0,1)$ and any probability measure $Q$ such that $\Vert G_+ \Vert^2_Q = \int G_+(x, y)^2  dQ(x, y) > 0$
\begin{eqnarray}\label{CN}
N(\delta \Vert G_+ \Vert_Q, \mathcal G_+, L_2(Q)) \le D \left(\frac{1}{\delta}\right)^{2 V_+} 
\end{eqnarray}
for some universal constant $D > 0$, which can be taken to be larger than $ 1/e$ without loss of generality. Define now the uniform entropy integral 
\begin{eqnarray*}
J(\eta, \mathcal{G}_+)  = \sup_{Q}   \int_0^\eta  \sqrt{1 + \log N(\delta \Vert G_+ \Vert_Q, \mathcal G_+, L_2(Q)) } d\delta
\end{eqnarray*}
for $\eta > 0$. In the above definition, the supremum is taken again over $Q$ such that $\Vert G_+ \Vert_Q > 0$. Using the bound given in (\ref{CN}), it follows that
\begin{eqnarray*}
J(1, \mathcal{G}_+) & \le  & \int_0^1  \left(\sqrt{1 + \log D} +  \sqrt{2 V_+} \sqrt{\log\left(\frac{1}{\delta} \right)} \right) d\delta  \\
& \le &  \sqrt{1 + \log D}  +  \sqrt{2 V_+} \int_0^1 \frac{1}{\sqrt{\delta}} d \delta \\
& =  & \sqrt{1 + \log D}  +  2\sqrt{2 V_+}.
\end{eqnarray*}
Also, it is easy to show that the envelope $G_+$ is square integrable with respect to $\mathbb P$ since
\begin{eqnarray*}
\mathbb E[\vert Y \vert^{2\alpha}] & \le  &  2^{2\alpha -1} \left( C^{2\alpha} + \mathbb E[ \vert \epsilon\vert^{2\alpha} ] \right) \\
& = &2^{2\alpha -1} \left( C^{2\alpha} + c_\alpha \int \vert t \vert^{2\alpha} e^{- d^{-\alpha} \vert t \vert^\alpha} dt  \right)  \\
& =  &  2^{2\alpha -1} \left( C^{2\alpha} +  \frac{2}{\alpha} c_\alpha  d^{2 \alpha +1}  \Gamma\left( 2 + \frac{1}{\alpha}\right)   \right)  < \infty.
\end{eqnarray*}
Now, by \cite[Theorem 2.14.1]{aadbookE2}, it follows that
\begin{eqnarray*}
\mathbb E[\Vert \mathbb G_m \Vert^2_{\mathcal{G}_+}]^{1/2} \lesssim J(1, \mathcal{G}_+) \Vert G_+ \Vert_{\mathbb P} 
\end{eqnarray*}
where $\Vert \mathbb G_m \Vert_{\mathcal{G}_+} =  \sup_{g \in \mathcal{G}_+} \vert \mathbb G_m g \vert$ and $\Vert G_+ \Vert_{\mathbb P} =  \left(\int G_+(x, y)^2 d\mathbb P(x, y)\right)^{1/2}$.  Using the Markov's inequality, it follows that 
\begin{eqnarray*}
\Vert \mathbb G_m \Vert_{\mathcal{G}_+} = O_{\mathbb P}(1)
\end{eqnarray*}
which in turn implies that 
\begin{eqnarray*}
\sup_{l \in \mathcal{L}} \left \vert \int g_+ \circ l \ d (\mathbb P_m - \mathbb P)  \right \vert  = O_{\mathbb P}(m^{-1/2}) = o_{\mathbb P}(1).
\end{eqnarray*}
Since a similar reasoning can be used for the class of functions $g_- \circ \mathcal L$ we conclude that (\ref{UC1}) is true.

Next, we will turn the unmatched part of the empirical log-likelihood. We will show that
\begin{eqnarray}\label{UC2}
&& \sup_{\beta \in \overline{\mathcal B}(0, R^\ast)} \left \vert \int  \log \left(\int f(y -\beta^\top x) d\mathbb P^{\tilde X}_n(x) \right) d \mathbb P^{\tilde Y}_n(y)  -  \int  \log\left(\int f(y -\beta^\top x) d\mathbb P^{\tilde X}(x) \right) d \mathbb P^{\tilde Y}(y) \right \vert  \notag \\
&&= o_{\mathbb P^{\tilde X} \otimes \mathbb P^{\tilde Y}}(1). 
\end{eqnarray}
We have that
\begin{eqnarray*}
&& \int  \log\left(\int f(y -\beta^\top x) d\mathbb P^{\tilde X}_n(x) \right) d \mathbb P^{\tilde Y}_n(y)  - \int  \log\left(\int f(y -\beta^\top x) d\mathbb P^{\tilde X}(x) \right) d \mathbb P^{\tilde Y}(y)   \\
&& = \int  \log\left(\int f(y -\beta^\top x) d\mathbb P^{\tilde X}_n(x) \right) d \mathbb P^{\tilde Y}_n(y) - \int  \log \left(\int f(y -\beta^\top x) d\mathbb P^{\tilde X}_n(x) \right) d \mathbb P^{\tilde Y}(y) \\
&& + \  \int  \log \left(\int f(y -\beta^\top x) d\mathbb P^{\tilde X}_n(x) \right) d \mathbb P^{\tilde Y}(y) - \int  \log\left(\int f(y -\beta^\top x) d\mathbb P^{\tilde X}(x) \right) d \mathbb P^{\tilde Y}(y) \\
&& := \mathbb A_n(\beta)  + \mathbb B_n(\beta).
\end{eqnarray*}
We start with $\mathbb A_n$. Using the notation from empirical processes, we can rewrite $\mathbb A_n(\beta)$ as 
\begin{eqnarray*}
\mathbb A_n(\beta)  &=&   \int \log \left(\int f(y -\beta^\top x) d\mathbb P^{\tilde X}_n(x) \right) \  d (\mathbb P^{\tilde Y}_n - \mathbb P^{\tilde{Y}})(y) \\
& = & \int h_\beta(y) \  d (\mathbb P^{\tilde Y}_n - \mathbb P^{\tilde{Y}})(y)
\end{eqnarray*}
where for $y \in \mathbb R$
$$
h_{\beta}(y)  =  \log\left(\sum_{i=1}^k w_i f(y- \beta^\top x_i) \right)
$$
for $k = n$, $x_i = \tilde X_i$, and $w_i = 1/n$.  We will denote by $\mathcal H$ the class of functions to which $h_\beta$ belongs.  More formally we can take $\mathcal H$ to be the class 
\begin{eqnarray*}
\mathcal H & =  &  \Big \{ h_\beta:  h_\beta(y) =  \log\left(\sum_{i=1}^k w_i f(y- \beta^\top x_i) \right), \beta \in \overline{\mathcal B}(0, R^\ast), 
\\ && \  \  \    k \in \mathbb N, x_i \in \overline{\mathcal B}(0, B), w_i \in [0,1]: \sum_{i=1}^k w_i = 1  \Big \}.
\end{eqnarray*}
Note that the functions in $\mathcal H$ depend also on $x_1, \ldots, x_k$ but we omit to write this dependence explicitly. For fixed $y$ and $\beta$ (for $\alpha =1$, $y$ and $\beta$ are such that  $y \ne \beta^\top x_i$ for all $i=1, \ldots, k$), the gradient of the function $b \mapsto h_b(y)$ at $\beta$ is given by 
\begin{eqnarray*}
\nabla h_\beta(y) & = & -\frac{ \sum_{i=1}^k w_i f'(y- \beta^\top x_i) x_i}{ \sum_{i=1}^k w_i f(y- \beta^\top x_i)}  \\
& =  &  \frac{ \alpha  d^{-\alpha} \sum_{i=1}^k w_i \vert y- \beta^\top x_i \vert^{\alpha -1}  \operatorname{sgn}(y-\beta^\top x_i) \exp(- d^{-\alpha} \vert y- \beta^\top x_i\vert^\alpha ) x_i}{ \sum_{i=1}^k w_i \exp(- d^{-\alpha}\vert y- \beta^\top x_i\vert^\alpha )} 
\end{eqnarray*}
where $\operatorname{sgn}(z) = 1$ if $z > 0$, $= -1$ if $z < 0$ and $=0$ otherwise.  Then, using the fact that $x_i \in \overline{\mathcal{B}}(0, B)$ it follows
\begin{eqnarray*}
\Vert \nabla h_\beta(y) \Vert \le  B \alpha d^{-\alpha} \sum_{i=1}^k \tilde{w}_i \vert y- \beta^\top x_i \vert^{\alpha - 1}
\end{eqnarray*}
with 
\begin{eqnarray*}
\tilde w_i =  \frac{w_i \exp(- d^{-\alpha} \vert y - \beta^\top x_i \vert^\alpha) }{\sum_{j=1} ^k w_j \exp(- d^{-\alpha} \vert y - \beta^\top x_j \vert^\alpha) }   
\end{eqnarray*}
for $i=1, \ldots, k$, which sum up to 1.  This implies that 
\begin{eqnarray*}
\Vert \nabla h_\beta(y) \Vert & \le & B \alpha d^{-\alpha} \sup_{1 \le j \le k}  \vert y - \beta^\top x_j \vert^{\alpha -1} \sum_{i=1}^k \tilde{w}_i \\
& \le & B \alpha d^{-\alpha} (2^{\alpha -2} \vee 1) (\vert y \vert^{\alpha -1} +  C^{\alpha -1}) : = H(y)    
\end{eqnarray*}
where we recall that $C = B R^\ast$. In the preceding inequality, we used convexity of the function $t \mapsto \vert t \vert^{\alpha -1}$ for $\alpha \ge 2$.

Using the mean value theorem for multivariate functions it holds that for any $\beta$ and $\beta'$ in $\overline{\mathcal B}(0, R^\ast)$
\begin{eqnarray*}
\vert h_\beta(y)  - h_{\beta'}(y) \vert \le  \Vert \beta - \beta' \Vert   H(y).
\end{eqnarray*}
Define now the class $\widetilde{\mathcal H}$ of functions $\tilde{h}_\beta$ given by
\begin{align*}
    \tilde{h}_\beta(y) : =  \frac{h_\beta(y) - h_0(y)}{(R^\ast \vee 1)H(y)}
\end{align*}
where $h_0$ denotes the element in $\mathcal H$ corresponding to $\beta=0$, that is $h_0 = \log{f}$. It is clear that the class $\widetilde{\mathcal H}$ admits $1$ as an envelope function, since
\begin{align}\label{Env1}
    \lvert \tilde{h}_\beta(y)\rvert &= \frac{1}{(R^\ast\vee 1)H(y)} \lvert h_\beta(y)-h_{0}(y)\rvert \notag \\
    &\leq \frac{\lVert\beta-0\rVert}{R^\ast \vee 1}\leq 1.
\end{align}
Also, 
\begin{eqnarray*}
\nabla \tilde{h}_\beta(y)   =  \frac{\nabla h_\beta(y)}{(R^\ast \vee 1) H(y)}
\end{eqnarray*}
and hence
\begin{eqnarray}\label{EnvGrad1}
\Vert \nabla  \tilde{h}_\beta(y) \Vert  \le \frac{1}{R^\ast \vee 1} \le 1.
\end{eqnarray}
Now, note that
\begin{align}\label{An}
    \mathbb A_n(\beta)  & =   (R^\ast\vee1) \int  \tilde{h}_\beta(y)  H(y) \  d(\mathbb P^{\tilde Y}_n  - \mathbb P^{\tilde Y})(y)  + \int \log{f(y)} \  d(\mathbb P^{\tilde Y}_n  - \mathbb P^{\tilde Y})(y) \notag  \\
& =  (R^\ast \vee 1) \frac{1}{\sqrt n} \int \tilde{h}_\beta(y)  H(y) \ d\mathbb G^{\tilde Y}_n(y)+ \int \log{f(y)} \  d(\mathbb P^{\tilde Y}_n  - \mathbb P^{\tilde Y})(y) \notag \\
& =  (R^\ast \vee 1)  \widetilde{\mathbb A}_{n}(\beta) + \int \log{f(y)} \  d(\mathbb P^{\tilde Y}_n  - \mathbb P^{\tilde Y})(y).
\end{align}
By \cite[Theorem 2.7.17]{aadbookE2} and (\ref{EnvGrad1}) it follows that  for all $\eta > 0$
\begin{eqnarray*}
N_{[ \ ]}(2 \eta , \widetilde{\mathcal H}, L_2(\mathbb P^{\tilde Y})) \le N(\eta, \overline{\mathcal B}(0, R^\ast), \Vert \cdot \Vert )
\end{eqnarray*}
where $N_{[ \ ]}(2 \eta, \widetilde{\mathcal H},  L_2(\mathbb P^{\tilde Y})) $ is the $(2\eta)$-bracketing number of the class $\widetilde{\mathcal H}$ with respect to  $L_2(\mathbb P^{\tilde Y})$, and $N(\eta, T, d)$ is the covering number of some set $T$ equipped with a metric $d$.  Here, $\Vert \cdot \Vert $ denotes the Euclidean norm. As stated in page 147 of \cite{aadbookE2}, $N(\eta, T, d) \le D(\eta, T, d)$ where the latter denotes the packing number which is the maximum of $\eta$-separated points in $T$. By \cite[Problems and Complements (7) in page 143]{aadbookE2} we know that 
$$
D(\eta, \overline{\mathcal B}(0, R^\ast), \Vert \cdot \Vert )  \le \left(\frac{3 R^\ast}{\eta} \right)^p.
$$
Therefore, 
\begin{eqnarray}\label{BrackEntr}
N_{[ \ ]}(\eta , \widetilde{\mathcal H}, L_2(\mathbb P^{\tilde Y})) \le \left(\frac{6 R^\ast}{\eta} \right)^p.
\end{eqnarray}
Now, we will derive an upper bound for the bracketing number of the class $\widetilde{\mathcal{H}} \cdot H $; i.e., the class of functions of the form $\tilde h_\beta \times H$.  Let $[\tilde L, \tilde U]$ be an $\eta$-bracket of $\widetilde{\mathcal{H}}$. Since $\Vert \tilde{h}_\beta \Vert_{\infty} \le 1$, it can be easily shown that we can assume without loss of generality that $\tilde L \ge -1 $ and $\tilde U \le 1$. In fact, if we replace $\tilde L$ and $\tilde U$  with $\tilde{L}' = \tilde L \vee (-1)$ and $ \tilde U'= \tilde U \wedge 1$ we can show that $[\tilde{L}', \tilde{U}']$ is also an $\eta$-bracket. Define
\begin{eqnarray*}
L =  \tilde{L}  H, \ \ \text{and} \ \  U =  \tilde{U}  H.
\end{eqnarray*}
Then, $U- L =  H (\tilde U - \tilde L) \ge 0$ since $H \ge 0$ and $\tilde U - \tilde L \ge 0$ by the definition of a bracket.  Also,  
\begin{eqnarray*}
&& \int (U(y) - L(y))^2 d\mathbb P^{\tilde Y}(y)    \\
&& =   \int (\tilde U(y) - \tilde L(y))^2  H^2(y) d\mathbb P^{\tilde Y}(y) \\
&& \le  2 \int \big(\tilde U(y)  - \tilde L(y)\big)  H^2(y) d\mathbb P^{\tilde Y}(y),\ \textrm{using the fact that $0 \le \tilde U - \tilde L \le 2$}  \\
&& \le 2 \left(\int \big(\tilde U(y)  - \tilde L(y)\big)^2 d\mathbb P^{\tilde Y}(y)\right)^{1/2} \left(\int H^4(y) d\mathbb P^{\tilde Y}(y)\right)^{1/2}, \\
&& \  \  \ \ \textrm{using the Cauchy-Schwarz inequality} \\
&&  \le 2 \eta  \left(\int H^4(y) d\mathbb P^{\tilde Y}(y)  \right)^{1/2}  = K \eta
\end{eqnarray*}
where $K = 2 \big(\int H^4(y) d\mathbb P^{\tilde Y}(y) \big)^{\frac{1}{2}} < \infty.$ In fact, we compute
\begin{eqnarray*}
\int  H(y)^4  d\mathbb P^{\tilde Y}(y)  & = &  \left(B \alpha d^{-\alpha} (2^{\alpha -2} \vee 1) \right)^4\int   (\vert y \vert^{\alpha -1} +  C^{\alpha -1})^4 \left(\int f(y - \beta^\top_0 x)  f^X(x) dx \right)  dy  
\end{eqnarray*}
with
\begin{eqnarray*}
\int f(y - \beta^\top_0 x)  f^X(x) dx &  =  &  c_\alpha \int \exp(-d^{-\alpha} \vert y- \beta^\top_0 x \vert^\alpha) f^X(x) dx \\
& \le & c_\alpha \exp(d^{-\alpha} C^\alpha)\exp(-d^{-\alpha} 2^{1-\alpha} \vert y \vert^{\alpha}) 
\end{eqnarray*}
using again the fact that $  \vert a - b \vert^\alpha \ge 2^{1-\alpha} \vert a \vert^\alpha  - \vert b\vert^\alpha $  and $- \vert \beta^\top_0 x \vert  \ge - \Vert \beta_0 \Vert B \ge - C$.  Therefore,
\begin{eqnarray*}
\int  H(y)^4  d\mathbb P^{\tilde Y}(y)  & \lesssim &  \int   (\vert y \vert^{\alpha -1} +  C^{\alpha -1})^4 \exp(-d^{-\alpha} 2^{1-\alpha} \vert y \vert^{\alpha}) dy < \infty.
\end{eqnarray*}
From the calculations  we conclude that  for any given $\eta > 0$,   the number of $(K \eta)^{1/2}$-brackets of the class $\widetilde{\mathcal H} \cdot H$ is at most the number of $\eta$-brackets of the class $\widetilde {\mathcal H}$. More precisely, for any $\eta > 0$
\begin{eqnarray*}
N_{[ \ ]}( K^{1/2} \eta^{1/2}, \widetilde{\mathcal H} \cdot H, L_2(\mathbb P^{\tilde Y}))  \le N_{[ \ ]}( \eta, \widetilde{\mathcal H}, L_2(\mathbb P^{\tilde Y}))
\end{eqnarray*}
or equivalently
\begin{eqnarray}\label{BrackIneq}
N_{[ \ ]}( \eta, \widetilde{\mathcal H} \cdot H, L_2(\mathbb P^{\tilde Y}))  \le  N_{[ \ ]}( \eta^2/K, \widetilde{\mathcal H}, L_2(\mathbb P^{\tilde Y})).
\end{eqnarray}
Note that $H$ is an envelope for the class $\widetilde{\mathcal{H}} \cdot H $. Denote the bracketing integral of this class  (with respect to $L_2(\mathbb{P}^{\tilde{Y}})$) by 
\begin{eqnarray*}
J_{[ \ ]}(\eta)= \int_0^\eta \sqrt{1 + \log N_{[ \ ]}(t \Vert H\Vert, \widetilde{\mathcal H} \cdot H, L_2(\mathbb P^{\tilde Y}))}  dt 
\end{eqnarray*}
where 
\begin{eqnarray*}
\Vert H\Vert^2  =   \int H^2(y)  d\mathbb P^{\tilde Y}(y)  \lesssim    \int (\vert y \vert^{\alpha -1} + C^{\alpha -1} )^2 \exp(-d^{-\alpha} 2^{1-\alpha} \vert y\vert^\alpha) dy.
\end{eqnarray*} 
By the inequalities in (\ref{BrackIneq}) and (\ref{BrackEntr}), we have for all $t > 0$
\begin{eqnarray*}
N_{[ \ ]}(t \Vert H\Vert, \widetilde{\mathcal H} \cdot H, L_2(\mathbb P^{\tilde Y})) & \le &  N_{[ \ ]}(t^2 \Vert H \Vert^2 K^{-1}, \widetilde{\mathcal H}, L_2(\mathbb  P^{\tilde Y})) \\
& \le &  \left( \frac{6 K R^\ast}{t^2 \Vert  H \Vert^2 }\right)^p
\end{eqnarray*}
At the cost of increasing $R^\ast$ so that 
$$
\Vert H \Vert \le \sqrt{6 K R^\ast}
$$
we have for all $\eta \in (0,1]$  that
\begin{eqnarray*}
J_{[ \ ]}(\eta) & \le&  \int_0^\eta \sqrt{1 +  p \log \left( \frac{6 K R^\ast}{t^2 \Vert  H \Vert^2} \right) } dt  \\
& \le & \eta  + \sqrt{p} \int_0^\eta \sqrt{\log \left(\frac{6 K R^\ast}{t^2 \Vert  H \Vert^2}\right)}    dt  \\
& = & \eta  + \sqrt{2p} \int_0^\eta \sqrt{\log \left(\frac{\sqrt{ 6 K R^\ast}}{t \Vert  H \Vert}\right)}    dt  \\
& \le & \eta  +  \frac{2\sqrt{2p} (6 K R^\ast)^{1/4}}{\Vert H \Vert^{1/2}} \sqrt{\eta} \\
& = &  \sqrt{\eta} \left(\sqrt \eta + \frac{2\sqrt{2p} (6 K R^\ast)^{1/4}}{\Vert H \Vert^{1/2}}  \right).
\end{eqnarray*}
By \cite[Theorem 2.14.16]{aadbookE2}, it follows that
\begin{eqnarray}\label{ImpIneq}
\mathbb E_{\mathbb P^{\tilde Y}}\left[ \Vert \mathbb G^{\tilde Y}_n \Vert_{\widetilde{\mathcal H} \cdot H}\right] 
 \lesssim  J_{[ \ ]}(1) \Vert H \Vert   =  \left(1 + \frac{2\sqrt{2p} (6 K R^\ast)^{1/4}}{\Vert H \Vert^{1/2}}  \right) \Vert H \Vert \lesssim 1
\end{eqnarray}
for all $n \ge 1$. Recall that 
\begin{eqnarray*}
\widetilde{A}_n(\beta) & =   &  \frac{1}{\sqrt n} \int \tilde h_\beta(y) H(y) d\mathbb G_n^{\tilde Y}(y)   \\
& \le & \frac{1}{\sqrt n}  \Vert \mathbb G^{\tilde Y}_n \Vert_{\widetilde{\mathcal H} \cdot H}.
\end{eqnarray*}
It follows from (\ref{ImpIneq}) and the Markov's inequality that 
\begin{eqnarray*}
\sup_{\beta \in \overline{\mathcal B}(0, R^\ast)} \vert \widetilde{\mathbb A}_n(\beta) \vert = O_{\mathbb P^{\tilde X}  \otimes \mathbb P^{\tilde Y}}\left(\frac{1}{\sqrt n} \right)  = o_{\mathbb P^{\tilde X}  \otimes \mathbb P^{\tilde Y}}(1).
\end{eqnarray*}
Finally, note that
\begin{eqnarray*}
\int \vert \log f(y) \vert^2 d\mathbb P^{\tilde Y}(y)  & = & \int \left( \log c_\alpha -  d^{-\alpha} \vert y \vert^\alpha \right)^2 f^{\tilde{Y}}(y) dy \\
& \le & 2 \left((\log c_\alpha)^2 +   d^{-2\alpha}   \int \vert y \vert^{2\alpha} f^{\tilde{Y}}(y) dy \right)  \\
& \le &  2 (\log c_\alpha)^2  \\
&& \  + \  2  d^{-2\alpha} c_\alpha \exp(d^{-\alpha} (\lVert\beta_0\rVert B)^\alpha) \int \vert y \vert^{2\alpha} \exp(-d^{-\alpha} 2^{1-\alpha} \vert y \vert^\alpha) dy \\
& \le &  2 (\log c_\alpha)^2  \\
&& \  + \ 2  d^{-2\alpha} c_\alpha \exp(d^{-\alpha} C^\alpha) \int \vert y \vert^{2\alpha} \exp(-d^{-\alpha} 2^{1-\alpha} \vert y \vert^\alpha) dy \\
& <  & \infty
\end{eqnarray*}
where above we used again the fact that $f^{\tilde{Y}}(y) = \int f(y- \beta_0^\top x) f^X(x) dx \le \sup_{x \in \overline{\mathcal B}(0, B)}  f(y-\beta_0^\top x)  \le c_\alpha \exp(-d^{-\alpha} 2^{1-\alpha} \vert y \vert^\alpha + d^{-\alpha}(\lVert\beta_0\rVert B)^\alpha).$   Now by the Central Limit Theorem, we have that 
$$
\int \log{f(y)} \  d(\mathbb P^{\tilde Y}_n  - \mathbb P^{\tilde Y})(y)  = O_{ \mathbb P^{\tilde Y}}\left(\frac{1}{\sqrt n} \right) =  o_{\mathbb  P^{\tilde Y}}(1).
$$
From the expression in (\ref{An}) and the calculations above, we conclude that
\begin{eqnarray*}
\sup_{\beta \in \overline{\mathcal B}(0, R^\ast)} \vert \mathbb A_n(\beta) \vert = O_{\mathbb P^{\tilde X}  \otimes \mathbb P^{\tilde Y}}\left(\frac{1}{\sqrt n} \right)  = o_{\mathbb P^{\tilde X}  \otimes \mathbb P^{\tilde Y}}(1).
\end{eqnarray*}

\bigskip
\bigskip

Now, we turn to the second empirical process $\mathbb B_n$.  First, note that for $\beta \in \overline{\mathcal{B}}(0, R^\ast)$
\begin{eqnarray*}
\mathbb B_n(\beta) =  \int  \log \left(\frac{\int f(y- \beta^\top x) d\mathbb P^{\tilde X}_n(x)}{\int f(y- \beta^\top x)d\mathbb P^{\tilde X}(x)}\right) d \mathbb P^{\tilde Y}(y). 
\end{eqnarray*}
Note that if  
\begin{eqnarray}\label{comp}
\int f(y- \beta^\top x) d\mathbb P^{\tilde X}_n(x) \ge \int f(y- \beta^\top x) d\mathbb P^{\tilde X}(x)
\end{eqnarray}
then
\begin{eqnarray*}
0 \le  \log \left(\frac{\int f(y- \beta^\top x) d\mathbb P^{\tilde X}_n(x)}{\int f(y- \beta^\top x)d\mathbb P^{\tilde X}(x)}\right) 
  & =  &  \log \left(1 +  \frac{\int f(y- \beta^\top x) d(\mathbb P^{\tilde X}_n  - \mathbb P^{\tilde X})(x)}{\int f(y- \beta^\top x)d\mathbb P^{\tilde X}(x)} \right) \\
& \le &    \frac{\int f(y- \beta^\top x) d(\mathbb P^{\tilde X}_n  - \mathbb P^{\tilde X})(x)}{\int f(y- \beta^\top x)d\mathbb P^{\tilde X}(x)},
\end{eqnarray*}
and when the inequality in (\ref{comp}) is reversed, then
\begin{eqnarray*}
  \frac{\int f(y- \beta^\top x) d(\mathbb P^{\tilde X}_n  - \mathbb P^{\tilde X})(x)}{\int f(y- \beta^\top x)d\mathbb P^{\tilde X}_n(x)} \le  \log \left(\frac{\int f(y- \beta^\top x) d\mathbb P^{\tilde X}_n(x)}{\int f(y- \beta^\top x)d\mathbb P^{\tilde X}(x)}\right)  \le 0.
\end{eqnarray*}
It follows that
\begin{eqnarray*}
\vert \mathbb B_n(\beta) \vert & \le  &  \int  \left \vert  \frac{\int f(y- \beta^\top x) d(\mathbb P^{\tilde X}_n  - \mathbb P^{\tilde X})(x)}{\int f(y- \beta^\top x)d\mathbb P^{\tilde X}(x)}   \right \vert d \mathbb P^{\tilde Y}(y) \\
&&  +  \int  \left \vert  \frac{\int f(y- \beta^\top x) d(\mathbb P^{\tilde X}_n  - \mathbb P^{\tilde X})(x)}{\int f(y- \beta^\top x)d\mathbb P^{\tilde X}_n(x)}   \right \vert d \mathbb P^{\tilde Y}(y)  \\
& := & \int \mathbb C_{n, \beta, 1}(y) d \mathbb P^{\tilde Y}(y) +  \int \mathbb C_{n, \beta, 2}(y) d \mathbb P^{\tilde Y}(y).  
\end{eqnarray*}
For $y \in \mathbb R$, we have that  
\begin{eqnarray*}
\mathbb C_{n,\beta, 1}(y) = \left \vert \int g_{\beta, y}(x)  d (\mathbb P^{\tilde X}_n  - \mathbb P^{\tilde X})(x) \right  \vert 
\end{eqnarray*}
with 
\begin{eqnarray*}
 g_{\beta, y}(x) =  \frac{f(y-\beta^\top x)}{\int f(y- \beta^\top z) d\mathbb P^{\tilde X}(z)}  = \frac{f(y-\beta^\top x)}{\int f(y- \beta^\top z) f^X(z) dz}.
\end{eqnarray*}
Let $\mathcal G_y$ be the class of such functions. We compute now the gradient of $\beta \mapsto  g_{\beta, y}(x)$.
\begin{eqnarray*}
\nabla  g_{\beta, y}(x)  & = &   - \frac{f'(y - \beta^\top x) x}{\int f(y- \beta^\top z) f^X(z) dz} + \frac{f(y - \beta^\top x) \int  
 z f'(y- \beta^\top z) f^X(z) dz }{\left(\int f(y- \beta^\top z) f^X(z) dz\right)^2}.    
\end{eqnarray*}
Let $y$ be such that $\vert y \vert >  C$ where we recall that $C = B R^\ast$. We have that 
\begin{eqnarray*}
\frac{\vert f'(y - \beta^\top x) \vert }{\int f(y- \beta^\top z) f^X(z) dz} &=  &  \frac{\alpha d^{-\alpha} \vert y - \beta^\top x \vert^{\alpha -1} \exp(- d^{-\alpha} \vert y - \beta^\top x\vert^\alpha)}{\int \exp(- d^{-\alpha} \vert y - \beta^\top z\vert^\alpha)  f^X(z) dz }  \\
& \le &  \alpha d^{-\alpha} \vert y - \beta^\top x \vert^{\alpha -1} \exp\Big( d^{-\alpha} (\sup_{z \in \overline{\mathcal{B}}(0, B)} \vert y - \beta^\top z\vert^\alpha  - \vert y - \beta^\top x\vert^\alpha) \Big) \\
& = & \alpha d^{-\alpha} \vert y - \beta^\top x \vert^{\alpha -1} \exp\left( d^{-\alpha}  \vert y \vert^\alpha \left( \left \vert 1 - \frac{\beta^\top z_y}{y}  \right \vert^\alpha - \left \vert  1 - \frac{\beta^\top x}{y}  \right \vert^\alpha \right) \right) 
\end{eqnarray*}
where $z_y$ denotes the vector where the supremum of $\vert y - \beta^\top z\vert^\alpha $ is achieved in $\overline{\mathcal{B}}(0, B)$.  
Using Taylor expansion, it follows that
\begin{eqnarray*}
 \left \vert 1 - \frac{\beta^\top z_y}{y}  \right \vert^\alpha \le \left(1 + \frac{C}{\vert y \vert}  \right)^\alpha = 1 + \alpha (1 + \theta^\ast_1)^{\alpha -1} \frac{C}{\vert y \vert}
\end{eqnarray*}
and 
\begin{eqnarray*}
 \left \vert 1 - \frac{\beta^\top x}{y}  \right \vert^\alpha \ge \left(1 - \frac{C}{\vert y \vert}  \right)^\alpha = 1 - \alpha (1 -\theta^\ast_2)^{\alpha -1} \frac{C}{\vert y \vert}
\end{eqnarray*}
where $\theta^\ast_1$ and $\theta^\ast_2$ depend on $\beta$ and $y$ and belong to  $(0, C/\vert y \vert) \subset (0,1)$ by our assumption about $y$.  Hence, 
\begin{eqnarray*}
\frac{\vert f'(y - \beta^\top x) \vert }{\int f(y- \beta^\top z) f^X(z) dz} 
& \le   & \alpha d^{-\alpha} \vert y - \beta^\top x \vert^{\alpha -1} \exp\left( \alpha d^{-\alpha} C \vert y \vert^{\alpha - 1} ((1 + \theta^\ast_1)^{\alpha -1} + (1 - \theta^\ast_2)^{\alpha -1})\right) \\
& \le & \alpha  d^{-\alpha}\vert y - \beta^\top x \vert^{\alpha -1} \exp\left( \alpha d^{-\alpha}   \tilde C \vert y \vert^{\alpha - 1} \right)
\end{eqnarray*}
where $\tilde C = C( 1 + 2^{\alpha -1})$. Now, let $y:  \vert y \vert \le C$. Then,
\begin{eqnarray*}
\sup_{\vert y \vert \le C, x \in \overline{\mathcal{B}}(0, B), \beta \in \overline{\mathcal{B}}(0, R^\ast)}  \frac{\vert f'(y - \beta^\top x) \vert }{\int f(y- \beta^\top z) f^X(z) dz}  \le \frac{\sup_{\vert y \vert \le C, \vert t \vert \le C} \vert f'(y -t) \vert }{\inf_{\vert y \vert \le C, \vert t\vert \le C} f(y- t)},
\end{eqnarray*}
where
\begin{eqnarray*}
 \sup_{\vert y \vert \le C, \vert t \vert \le C} \vert f'(y - t) \vert \le c_\alpha \alpha d^{-\alpha}  (2C)^{\alpha-1} 
 \end{eqnarray*}
and 
\begin{eqnarray*}
 \inf_{\vert y \vert \le C, \vert t \vert \le C} f(y -t)  \ge c_\alpha \exp(-d^{-\alpha} (2C)^\alpha)
\end{eqnarray*}
using the triangle inequality, the fact that the functions $u \mapsto u^{\alpha -1}$ and $u \mapsto u^\alpha$ are increasing on $(0, \infty)$ and $\exp(-x) \le 1$ for all $x \ge 0$.  Hence,
$$
\sup_{y:  \vert y \vert \le C, x \in \overline{\mathcal{B}}(0, B), \beta \in \overline{\mathcal{B}}(0, R^\ast)}  \frac{\vert f'(y - \beta^\top x) \vert }{\int f(y- \beta^\top z) f^X(z) dz}  \le M
$$
with
\begin{eqnarray*}
M := \alpha d^{-\alpha} (2C)^{\alpha -1} \exp(d^{-\alpha} (2C)^\alpha).
\end{eqnarray*}
It follows that for all $y \in \mathbb R$ and $\beta: \Vert \beta \Vert \le R^\ast$ and $x: \Vert x \Vert \le B$ 
\begin{eqnarray*}
\frac{ \vert f'(y - \beta^\top x) \vert }{\int f(y- \beta^\top z) f^X(z) dz} \le G_1(y)
\end{eqnarray*}
where
\begin{eqnarray}\label{G1}
G_1(y)  = 
\begin{cases}
 M,  \text{ if $\vert y \vert \le C $}   \\
 (2^{\alpha-2} \vee 1) \alpha d^{-\alpha} \left( \vert y \vert^{\alpha -1}  + C^{\alpha - 1}  \right) \exp\left( \alpha d^{-\alpha}   \tilde C \vert y \vert^{\alpha - 1} \right),  \text{ if $\vert y \vert > C $}.
\end{cases}
\end{eqnarray}
Now, we turn to the second term in the gradient $\nabla g_{\beta, y}(x)$ and define the function
\begin{eqnarray*}
k_{\beta, y}(x)  = \frac{f(y - \beta^\top x) \int  
 z \vert f'(y- \beta^\top z)  \vert  f^X(z) dz }{\left(\int f(y- \beta^\top z) f^X(z) dz\right)^2}.     
\end{eqnarray*}
We have that 
\begin{eqnarray}\label{kbeta}
k_{\beta, y}(x)  & = &  \frac{f(y - \beta^\top x)}{\int f(y- \beta^\top z) f^X(z) dz} \cdot \frac{\int z \lvert f'(y - \beta^\top z)\rvert f^X(z) dz}{\int f(y- \beta^\top z) f^X(z) dz}.
\end{eqnarray}
Using similar arguments as above we can show that 
\begin{eqnarray*}
\frac{f(y - \beta^\top x)}{\int f(y- \beta^\top z) f^X(z) dz} \le G_2(y)
\end{eqnarray*}
where 
\begin{eqnarray}\label{G2}
G_2(y)  = 
\begin{cases}
M'=  \exp(d^{-\alpha} (2C)^{\alpha}) ,  \text{ if $\vert y \vert \le C $}   \\
\exp\left( \alpha d^{-\alpha}   \tilde C \vert y \vert^{\alpha - 1} \right), \  \text{ if $\vert y \vert > C $}.
\end{cases}
\end{eqnarray}
Also, using the fact that
\begin{eqnarray*}
&& \frac{\int \Vert z \Vert \cdot \vert f'(y - \beta^\top z) \vert f^X(z) dz}{\int f(y- \beta^\top z) f^X(z) dz}  \le   B \frac{\sup_{z \in \overline{\mathcal B}(0, B)} \vert f'(y - \beta^\top z)  \vert }{\int f(y- \beta^\top z) f^X(z) dz}  \\
\end{eqnarray*}
we can use similar arguments as above to show that 
\begin{eqnarray*}
\frac{\int \Vert z \Vert \cdot \vert f'(y - \beta^\top z) \vert f^X(z) dz}{\int f(y- \beta^\top z) f^X(z) dz} \le B G_1(y). \ \ \ 
\end{eqnarray*}
It follows that
\begin{eqnarray*}
\Vert \nabla g_{\beta, y}  \Vert \le G(y) = B G_1(y)\left(1+G_2(y)\right) \ \ \ 
\end{eqnarray*}
and hence for a fixed $y$ and for all $x \in \overline{\mathcal B}(0, B)$ and $\beta, \beta'  \in \overline{\mathcal B}(0, R^\ast)$, we have that
\begin{eqnarray*}
 \vert g_{\beta, y}(x)  - g_{\beta', y}(x)  \vert \le \Vert \beta' - \beta  \Vert \ G(y).
\end{eqnarray*}
Consider now the new class $\widetilde{\mathcal G}_y$ of $\tilde{g}_{\beta,y}$ defined as
\begin{align*}
    \tilde{g}_{\beta,y} (x) = 
    \frac{g_{\beta,y} (x) -g_{0,y} (x) }{(R^\ast \vee 1) G(y)} =  \frac{g_{\beta,y} (x) - 1 }{(R^\ast \vee 1) G(y)}.
\end{align*}
Since this class admits the constant 1 as an envelope, we can use similar arguments as above to show that 

\begin{eqnarray*}
\mathbb E_{\mathbb P^{\tilde X}}\left[ \Vert \mathbb G^{\tilde X}_n \Vert_{\widetilde{\mathcal G}_y} \right]  = \mathbb E_{\mathbb P^{\tilde X}} \left[\sup_{\tilde g \in \widetilde{\mathcal G}_y} \left \vert \int \tilde{g}(x) d\mathbb G^{\tilde X}_n(x) \right \vert  \right]  \lesssim 1.   
\end{eqnarray*}
We have now 
\begin{align*}
\mathbb{C}_{n,\beta,1}(y) &= \left \vert  \int (R^\ast \vee 1) \tilde{g}_{\beta,y}(x) G(y) \ d(\mathbb{P}_n^{\tilde{X}} -\mathbb{P}^{\tilde{X}})(x) + \int 1 \ d(\mathbb{P}_n^{\tilde{X}} -\mathbb{P}^{\tilde{X}})(x)\right \vert   \\
    &= (R^\ast \vee 1)\frac{1}{\sqrt{n}}\left \vert  \int  \tilde{g}_{\beta,y}(x) G(y) \ d\mathbb{G}_n^{\tilde{X}}(x)\right \vert.
\end{align*}
Then,
\begin{eqnarray*}
\mathbb E_{\mathbb P^{\tilde X}}\left[\sup_{\beta \in \overline{\mathcal B}(0, R^\ast)} \int \mathbb C_{n, \beta, 1}(y)  d\mathbb P^{\tilde Y}(y)  \right] 
&\le  &  \mathbb E_{\mathbb P^{\tilde X} } \left[ \int \sup_{\beta \in \overline{\mathcal B}(0, R^\ast)} \mathbb C_{n, \beta, 1}(y)  d\mathbb P^{\tilde Y}(y) \right ]  \\
& \lesssim  &  \frac{1}{\sqrt n} \mathbb E_{\mathbb P^{\tilde X}} \left[\int \sup_{\tilde g \in \widetilde{\mathcal G}_y} \left \vert  \int \tilde g(x) d \mathbb G^{\tilde X}_n(x) \right \vert G(y)  d\mathbb P^{\tilde Y}(y) \right] \\
& = & \frac{1}{\sqrt n}  \int  \mathbb E_{\mathbb P^{\tilde X} } \left[\sup_{\tilde g \in \widetilde{\mathcal G}_y} \left \vert  \int \tilde g(x) d \mathbb G^{\tilde X}_n(x) \right \vert \right] G(y)  d\mathbb P^{\tilde Y}(y) \\
& \lesssim &  \frac{1}{\sqrt n} \int G(y)  \int f(y- \beta^\top_0 x) f^X(x) dx dy  \\
& \lesssim &  \frac{1}{\sqrt n}  \int G(y) \sup_{x \in \overline{\mathcal B}(0, B)} \exp(-d^{-\alpha} \vert y - \beta_0^\top x \vert^\alpha )  dy  \\
& \lesssim &  \frac{1}{\sqrt n}   \int G(y)  \exp(-d^{-\alpha} 2^{1-\alpha} \vert y \vert^\alpha)  dy < \infty.
\end{eqnarray*}
By Markov's inequality, we conclude that 
$$
\sup_{\beta \in \overline{\mathcal B}(0, R^\ast)} \int \mathbb C_{n, \beta, 1}(y)  d\mathbb P^{\tilde Y}(y) = O_{\mathbb P^{\tilde X}}\left(\frac{1}{\sqrt n} \right).  
$$
Now, we turn to the second term  $\int \mathbb C_{n, \beta, 2}(y) d\mathbb P^{\tilde Y}(y)$.  For $y \in \mathbb R$, we have that 
\begin{eqnarray*}
\mathbb C_{n, \beta,2}(y) & = & \left \vert \int \frac{f(y - \beta^\top x)}{n^{-1} \sum_{j=1}^n  f(y-\beta^\top X_j) } d(\mathbb P^{\tilde X}_n(x)  - \mathbb P^{\tilde X}(x))  \right \vert  \\
& = &  \left \vert \int s_{\beta, y}(x)  d(\mathbb P^{\tilde X}_n(x)  - \mathbb P^{\tilde X}(x))  \right \vert
\end{eqnarray*}
where the function $s_{\beta, y}$ can be written as 
\begin{eqnarray*}
s_{\beta, y}(x)  =  \frac{f(y - \beta^\top x)}{\sum_{j=1}^k w_j f(y-\beta^\top x_j) }
\end{eqnarray*}
for some integer $k \ge 1$, weights $w_i \in [0,1]$ such that $\sum_{j=1}^k w_j =1$ and $x_1, \ldots, x_k \in \overline{\mathcal B}(0, B)$.  To avoid a cumbersome notation, we will not explicitly write the dependence on $k$, $w_j$ and $x_j$ for $j=1, \ldots, k$.   The gradient of the partial function $\beta \mapsto s_{\beta, y}(x)$ is given by
\begin{eqnarray*}
\nabla s_{\beta, y}(x)  & =  &  - \frac{f'(y - \beta^\top x) x}{\sum_{j=1}^k w_j f(y-\beta^\top x_j) }  +  \frac{f(y - \beta^\top x) \sum_{j=1}^k w_j f'(y-\beta^\top x_j) x_j }{\left(\sum_{j=1}^k w_j f(y-\beta^\top x_j)\right)^2 }  \\ 
& = & - \frac{f'(y - \beta^\top x) x}{\sum_{j=1}^k w_j f(y-\beta^\top x_j) }  +  \frac{f(y - \beta^\top x)}{\sum_{j=1}^k w_j f(y-\beta^\top x_j)}  \frac{\sum_{j=1}^k w_j f'(y-\beta^\top x_j) x_j }{\sum_{j=1}^k w_j f(y-\beta^\top x_j)}.  
\end{eqnarray*}
It is easy to see that the functions involved in the gradient $\nabla s_{\beta, y}(x) $ have a very structure as the ones involved in $k_{\beta, y}(x)$ given above in (\ref{kbeta}).  Thus, we shall omit the proof of the fact that 
$$
\sup_{\beta \in \overline{\mathcal B}(0, R^\ast)} \int \mathbb C_{n, \beta, 2}(y)  d\mathbb P^{\tilde Y}(y) = O_{\mathbb P^{\tilde X}}\left(\frac{1}{\sqrt n} \right)  
$$
which in turn implies that $\sup_{\beta \in \overline{\mathcal B}(0, R^\ast)}  \vert \mathbb B_n(\beta) \vert = O_{\mathbb P^{\tilde X}}\left(\frac{1}{\sqrt n} \right)$.  This finishes the proof that the uniform consistency in (\ref{UC}) holds true.  \\
\bigskip

\par \noindent Next, we need to show that the regression model based on combining the matched and unmatched variables is identifiable.  Firstly, we show that $\ell(\beta) \leq \ell(\beta_0)$ for all $\beta \in \mathbb{R}^p$. Recall that
\begin{align*}
    \ell(\beta) &= \frac{1}{\lambda + 1} \int \log \left( \int f(y - \beta^\top x) d \mathbb P^{\tilde{X}}(x)  \right)  d\mathbb P^{\tilde Y}(y) +  \    \frac{\lambda}{\lambda + 1}  \int \log f(y - \beta^\top x)  d\mathbb P(x, y) \\
    &= \frac{1}{\lambda + 1} \int \log \left( \int f(y - \beta^\top x) f^X(x)dx \right) \int f(y - \beta_0^\top x)f^X(x)dx dy \\
    & \ \ +  \frac{\lambda}{\lambda + 1}  \int \log \left(f(y - \beta^\top x) \right) f(y - \beta_0^\top x)f^X(x)dx dy
\end{align*}
Using Jensen's inequality applied to the convex function $-\log $, we have for the matched part
\begin{align*}
    & \int \log\left( \frac{f(y - \beta^\top x)}{f(y - \beta^\top_0 x)} \right) f(y - \beta_0^\top x)f^X(x)dx dy \\
    & \leq  \log \left(\int  \frac{f(y - \beta^\top x)}{f(y - \beta_0^\top x)}  f(y - \beta_0^\top x)f^X(x)dx dy \right) \\
     & = \log \left(\int f(y - \beta^\top x) f^X(x)dx dy \right)\\ 
    &= 0 
\end{align*}
since $\int f(y - \beta^\top x) f^X(x)dx dy = 1$. 
Similarly, we have for the unmatched part
\begin{align*}
    &\int \log \left( \frac{\int f(y - \beta^\top x) f^X(x)dx}{\int f(y - \beta_0^\top x) f^X(x)dx} \right) \int f(y - \beta_0^\top x) f^X(x) dxdy\\
    &\leq \log \left( \int  \frac{\int f(y - \beta^\top x) f^X(x)dx}{\int f(y - \beta_0^\top x) f^X(x)dx}  \int f(y - \beta_0^\top x)f^X(x)dx dy\right)\\
    & = \log \left(\int f(y - \beta^\top x) f^X(x)dx dy \right) \\
    & = 0.
\end{align*}
We conclude that $\ell(\beta)\leq \ell(\beta_0)$ for all $\beta \in \mathbb{R}^p$.

Next, we show that the previous inequality is strict, that is $\ell(\beta) < \ell(\beta_0)$ for all $\beta \in \mathbb R^p \setminus \{\beta_0\}$. Let $\varphi$ be strictly convex function. Then, for any integrable random variable  $W$ which belongs to the domain of $\varphi$ almost surely the Jensen's inequality 
$$
\mathbb E[\varphi(W)] \ge \varphi(\mathbb E(W))
$$
is an equality if and only if $\mathbb P(W = \mathbb E(W)) = 1$.  Since $- \log $ is strictly convex on $(0, \infty)$, it holds that 
$$
\int \log\left( \frac{f(y - \beta^\top x)}{f(y - \beta^\top_0 x)} \right) f(y - \beta_0^\top x)f^X(x)dx dy = 0
$$
if and only if
$$
\mathbb{P}\left(\frac{f(Y - \beta^\top X)}{f(Y - \beta_0^\top X)} = \mathbb E \left[ \frac{f(Y - \beta^\top X)}{f(Y - \beta_0^\top X)}\right]  \right) = 1. 
$$
Since $\displaystyle \mathbb E \left[ \frac{f(Y - \beta^\top X)}{f(Y - \beta_0^\top X)}\right] =1$, we conclude that 
\begin{eqnarray*}
f(y - \beta^\top_0 x)  = f(y - \beta^\top x)
\end{eqnarray*}
for almost every $(x,y)  \in \mathcal X \times \mathbb R$, where $\mathcal X$ denotes the support of $X$.    Now, fix $x \in \mathcal X$ and write $t =y - \beta^\top_0 x $  and $a =  (\beta_0 - \beta)^\top x$. Then, the preceding inequality can be re-written as 
\begin{eqnarray}\label{cyc}
f(t)  = f(t + a)
\end{eqnarray}
for almost all $t \in \mathbb R$. This implies that $a =0$, Indeed, suppose that $a \ne 0$. Without loss of generality, we can assume that $a > 0$.  Then,
\begin{eqnarray*}
\int_{-\infty}^\infty f(t)dt & =  & \int_{-\infty}^a  f(t) 
dt + \sum_{k=1}^\infty \int_{ka}^{(k+1)a}    f(t) dt,
\end{eqnarray*}
where $\int_{ka}^{(k+1)a}    f(t) dt = \int_0^a f(t - ka) dt = \int_0^a f(t) dt$, since (\ref{cyc}) implies that $f(t) = f(t - ka)$ for all $k \in \mathbb N$.  This implies that $\int_0^a f(t) dt =0$ because otherwise we would have $\int_{\mathbb R} f(t) dt = \infty$, which is impossible.  It follows that $a =0$ and hence
$$
(\beta_0 - \beta)^\top x =0
$$
for almost all $x \in \mathcal X$.  This means that
$$
\mathbb P( u^\top  X  =0)  =1
$$
with $u = \beta_0  - \beta$.   By Assumption (A2), this implies that $u =0$ and hence $\beta = \beta_0$. 
 We conclude that $\ell(\beta) < \ell(\beta_0)$ for all $\beta \ne  \beta_0$.  In fact, if there existed $\beta  \ne \beta_0$ such that  $\ell(\beta) = \ell(\beta_0)$, then by the already proved  fact that $\ell(\beta) \le \ell(\beta_0)$, we must have equality for both the matched and unmatched parts. In particular we must have that 
 \begin{align*}
    & \int \log f(y - \beta^\top x) f(y - \beta_0^\top x)f^X(x)dx dy  =  \int \log f(y - \beta^\top_0 x) f(y - \beta_0^\top x)f^X(x)dx dy
 \end{align*}
and we just proved above that this is impossible. 

Now, let $r > 0$. we show next that 
$$
\sup_{\beta \in \mathcal O_r} \ell(\beta) < \ell(\beta_0).
$$


As shown above, there exists $R^\ast$ so that with probability 1 the maximizer  of the empirical likelihood can be restricted to the ball $\overline{\mathcal{B}}(0, R^\ast)$ for $n$ and $m$ large enough.  Therefore, showing the result of the theorem is equivalent to showing that
$$
\sup_{\beta \in \mathcal O_r \cap \overline{\mathcal{B}}(0, R^\ast)} \ell(\beta) < \ell(\beta_0).
$$
To avoid trivialities, we can take $r \in (0,1)$ so that $\mathcal O_r \cap \overline{\mathcal{B}}(0, R^\ast) \ne \emptyset$. In fact, if theorem is shown for $r$ small, then it will continue to hold for larger values of $r$. Hence, we assume in what follows that $r$. Using the fact that $\mathcal{O}_r \subset \overline{\mathcal{O}}_{r/2} =  \{ \beta \in \mathbb R^p:  \Vert \beta - \beta_0 \Vert \ge r/2 \}$, it follows that
\begin{eqnarray*}
\sup_{\beta \in \mathcal O_r \cap \overline{\mathcal{B}}(0, R^\ast)} \ell(\beta)  \le \sup_{\beta \in \overline{\mathcal O}_{r/2} \cap \overline{\mathcal{B}}(0, R^\ast)} \ell(\beta).
\end{eqnarray*}
The set $\overline{\mathcal O}_{r/2} \cap \overline{\mathcal{B}}(0, R^\ast)$ is compact as it is the intersection of a closed set and a compact set. Since the function $\beta \mapsto \ell(\beta)$ is continuous, it follows that $\beta \mapsto \ell(\beta)$ attains its supremum over this set at some $\beta^\ast \ne \beta_0$. Using the result obtained above, it holds that 
$$
\sup_{\beta \in \overline{\mathcal O}_{r/2} \cap \overline{\mathcal{B}}(0, R^\ast)} \ell(\beta) = \ell(\beta^\ast) < \ell(\beta_0)
$$
and the claim of the proposition follows.  


By \cite[Corollary 3.2.3]{aadbookE2}, it follows that $\widehat \beta_{n,m}  \to_{\mathbb P \otimes \mathbb P^{\tilde X} \otimes \mathbb P^{\tilde Y}} \beta_0$. This finishes the proof.
\end{proof}

\subsection{Proofs for Section \ref{WC}}

\begin{proof}[Proof of Theorem \ref{AsympNorm}]
To simplify the notation, we will write
$$
\frac{\partial}{\partial\beta}{\ell}_{n,m}(\beta)|_{\beta = \tilde \beta}
$$
and 
$$
\frac{\partial^2}{\partial\beta \partial \beta^\top}{\ell}_{n,m}(\beta)|_{\beta = \tilde \beta}
$$
as
$\dot{\ell}_{n, m}(\tilde{\beta})$ and  $\ddot{\ell}_{n, m}(\tilde{\beta})$  respectively. Since $\widehat{\beta}_{n,m}$ is maximizer of $\ell_{n,m}(\beta)$, we have then
\begin{align*}
     0 &= \dot{\ell}_{n,m}(\widehat{\beta}_{n,m}) \\
    &= \dot{\ell}_{n,m}(\beta_0) + \ddot{\ell}_{n,m}(\beta_0) (\widehat{\beta}_{n,m} - \beta_0)  + o_{\mathbb P \otimes \mathbb P^{\tilde X} \otimes \mathbb P^{\tilde Y}}\left(\Vert \widehat{\beta}_{n,m} - \beta_0\Vert \right).
\end{align*}
Let us assume for now that the matrix $\ddot{\ell}_{n,m}(\beta_0)$ is invertible (this will be proved below in Theorem \ref{Remainder2}). By Theorem \ref{Consis}, we know that $\widehat{\beta}_{n,m}\overset{\mathbb P \otimes \mathbb P^{\tilde X} \otimes \mathbb P^{\tilde Y}}{\rightarrow}\beta_0$ and hence
\begin{eqnarray*}
    \sqrt{m+n}(\widehat{\beta}_{n,m} - \beta_0) = -\left(\ddot{\ell}_{n,m}(\beta_0)\right)^{-1}\sqrt{m + n} \ \dot{\ell}_{n,m}(\beta_0) + o_{\mathbb P \otimes \mathbb P^{\tilde X} \otimes \mathbb P^{\tilde Y}}(1).
\end{eqnarray*}
We have that
\begin{eqnarray}\label{score}
    \dot{\ell}_{n,m}(\beta_0) &  =   &  - \frac{1}{n+m} \sum_{j=1}^n \frac{\int x f'(\widetilde Y_j - \beta_0^\top x) d\mathbb P^{\tilde X}_n(x)}{\int  f(\widetilde Y_j - \beta_0^\top x) d\mathbb P^{\tilde X}_n(x)}  - \frac{1}{n+m} \sum_{k=1}^m  \frac{ X_k f'(Y_k- \beta_0^\top X_k)}{f(Y_k- \beta_0^\top X_k)}  \notag \\
    &  = &   - \frac{n}{n+m} \frac{1}{n} \sum_{j=1}^n \frac{\int x f'(\widetilde Y_j - \beta_0^\top x) d\mathbb P^{\tilde X}_n(x)}{\int  f(\widetilde Y_j - \beta_0^\top x) d\mathbb P^{\tilde X}_n(x)}  - \frac{m}{n+m} \int \frac{ x f'(y- \beta_0^\top x)}{f(y- \beta_0^\top x)} d\mathbb P_m(x, y) \notag. \\
    && 
    \end{eqnarray}
We start with the second term and note that 
\begin{eqnarray*}
\int \frac{ x f'(y- \beta_0^\top x)}{f(y- \beta_0^\top x)} d\mathbb P(x, y) =  \int  x f'(y- \beta_0^\top x) f^X(x) dx dy  = \mathbb E(X) \int f'(t) dt = 0
\end{eqnarray*}
using the change of variable $t = y - \beta_0^\top x$.  Using the same change of variable, we compute
\begin{eqnarray*}
 \int \frac{ x x^\top (f'(y- \beta_0^\top x))^2}{(f(y- \beta_0^\top x))^2} d\mathbb P(x, y)  & =  &     \left(\int\frac{( f'(t))^2}{f(t)}dt\right) \int (xx^\top) f^X(x) dx  \\
 & = &  \left(\int\frac{( f'(t))^2}{f(t)}dt\right)  \mathbb E[X X^\top]  = \Sigma_2.
\end{eqnarray*}
By the Central Limit Theorem, it follows that 
\begin{eqnarray*}
\sqrt m \int \frac{ x f'(y- \beta_0^\top x)}{f(y- \beta_0^\top x)} d\mathbb P_m(x, y)  \to_d \mathcal{N}(0, \Sigma_2)   
\end{eqnarray*}
and hence
\begin{eqnarray*}
\sqrt{m+n} \frac{m}{n+m} \int \frac{ x f'(y- \beta_0^\top x)}{f(y- \beta_0^\top x)} d\mathbb P_m(x, y)    & = &  \sqrt{\frac{m}{m+n}} \sqrt m  \int \frac{ x f'(y- \beta_0^\top x)}{f(y- \beta_0^\top x)} d\mathbb P_m(x, y)  \\
& \to_d &  \mathcal N\left(0,  \frac{\lambda}{1+\lambda} \Sigma_2 \right). 
\end{eqnarray*}
Note that 
\begin{eqnarray*}
\int \frac{ (f'(t))^2}{f(t)} dt & = &  \alpha^2 d^{-2\alpha} \int   t^{2\alpha-2} f(t) dt =  \alpha^2 d^{-2\alpha} \mathbb E[\epsilon^{2\alpha -2}]. 
\end{eqnarray*}
More explicitly, 
\begin{eqnarray*}
\int   t^{2\alpha-2} f(t) dt & =  &  c_\alpha  \int  t^{2\alpha-2} \exp(-d^{-\alpha} \vert t \vert^\alpha) dt \\
& = &  2 c_\alpha \int_0^\infty  t^{2\alpha-2} \exp(-d^{-\alpha} t^\alpha) dt \\
& = &  \frac{2 c_\alpha}{\alpha}  \int x^{1 - 1/\alpha } \exp(-d^{-\alpha} x)  dx  \\
& = &  \frac{2 c_\alpha}{\alpha} \ d^{2\alpha -1}  \ \Gamma\left(2 - \frac{1}{\alpha}\right).
\end{eqnarray*}
Using the fact that $c_\alpha =  \alpha / (2d \Gamma(1/\alpha))$ we obtain
\begin{eqnarray*}
\int \frac{(f'(t))^2}{f(t)} dt  =   \frac{\alpha^2}{d^2}  \frac{\Gamma\left(2 - \frac{1}{\alpha}\right)}{\Gamma\left(\frac{1}{\alpha}\right)}.
\end{eqnarray*}
If $\epsilon \sim \mathcal{N}(0,\sigma^2)$, then the integral specializes to $1/\sigma^2$. 

\bigskip

Now, we turn to the first term in (\ref{score}).  It follows from Theorem \ref{Remainder} that 
\begin{eqnarray*}
\frac{1}{\sqrt n} \sum_{j=1}^n \frac{\int x f'(\widetilde Y_j - \beta_0^\top x) d\mathbb P^{\tilde X}_n(x)}{\int  f(\widetilde Y_j - \beta_0^\top x) d\mathbb P^{\tilde X}_n(x)}  \to_d \mathcal N(0, \Gamma_1  +  \Gamma_2) 
\end{eqnarray*}
which implies that 
\begin{eqnarray*}
\sqrt{m+n} \frac{n}{n+m} \frac{1}{n} \sum_{j=1}^n \frac{\int x f'(\widetilde Y_j - \beta_0^\top x) d\mathbb P^{\tilde X}_n(x)}{\int f(\widetilde Y_j - \beta_0^\top x) d\mathbb P^{\tilde X}_n(x)}  & = &  \sqrt{\frac{n}{n+m}} \frac{1}{\sqrt n} \sum_{j=1}^n \frac{\int x f'(\widetilde Y_j - \beta_0^\top x) d\mathbb P^{\tilde X}_n(x)}{\int f(\widetilde Y_j - \beta_0^\top x) d\mathbb P^{\tilde X}_n(x)}  \\
& \to_d &  \mathcal{N}\left(0,  \frac{1}{\lambda +1} (\Gamma_1 + \Gamma_2) \right). 
\end{eqnarray*}
Since the matched and unmatched samples are independent, we conclude the following weak convergence result
\begin{eqnarray*}
\sqrt{m+n} \ \dot{\ell}_{n,m}(\beta_0) \to_d \mathcal N\left(0,     \frac{1}{\lambda +1} (\Gamma_1 + \Gamma_2) + \frac{\lambda}{\lambda +1} \Sigma_2 \right).    
\end{eqnarray*}
Finally, it follows from Theorem \ref{Remainder2} that 
\begin{eqnarray*}
\ddot{\ell}_{n, m}(\beta_0)  \to_{\mathbb P \otimes \mathbb P^{\tilde X} \otimes \mathbb P^{\tilde Y}} -\frac{1}{1+\lambda} \Gamma_1 -\frac{\lambda}{1+\lambda} \Sigma_2.
\end{eqnarray*}
Note that $\Sigma_2$ is positive definite since $X$ admits an absolutely continuous distribution. This in turn implies that the matrix
$$
\frac{1}{1+\lambda} \Gamma_1 +  \frac{\lambda}{1+\lambda} \Sigma_2
$$
is also positive definite. In fact, if $u \in \mathbb R^p$ is an eigenvector corresponding to the eigenvalue $0$, then we must have 
$$
\frac{1}{1+\lambda} u^\top \Gamma_1 u +  \frac{\lambda}{1+\lambda} u^\top \Sigma_2 u = 0
$$
and hence $u^\top \Gamma_1 u = u^\top \Sigma_2 u = 0$ since $\Gamma_1$ and $\Sigma_2$ are semi-positive. Therefore, we must have $u = 0$. By the weak convergence obtained above, the continuous mapping theorem (applied to the map $M \mapsto M^{-1}$ defined on the space of positive definite matrices $M \in \mathbb R^{p \times p}$) and Slutsky's theorem we conclude that 
$$
\sqrt{m+n} (\widehat \beta_{m,n} - \beta_0)  \to_d \left(\frac{1}{1+\lambda} \Gamma_1 +  \frac{\lambda}{1+\lambda} \Sigma_2\right)^{-1}   N\left(0,     \frac{1}{\lambda +1} (\Gamma_1 + \Gamma_2) + \frac{\lambda}{\lambda +1} \Sigma_2 \right)
$$
which yields the result.

\end{proof}

\begin{proof}[Proof of Theorem \ref{gain}.]
Since $\epsilon \sim \mathcal{N}(0, \sigma_\epsilon^2)$, the matched MLE and the OLSE are equal. Then, the statistical gain $G$ is defined as 
\begin{align*}
   G^{-1}= \frac{\text{Vol}_{\text{SSL}}}{\text{Vol}_{\text{OLS}}} &= \sqrt{\frac{\det{\Tilde{\Sigma}_{\text{SSL}}}}{\det{\Sigma_{\text{OLS}}}}} = \frac{1}{\sqrt{\frac{\det{\Sigma_{\text{OLS}}}}{\det{\Tilde{\Sigma}_{\text{SSL}}}}}} \\
     & = \frac{1}{\sqrt{\det{\left(\Sigma_2^{-1}\left(\frac{1}{\lambda} \Gamma_1 + \Sigma_2\right) \left(\frac{1}{\lambda}\left(\Gamma_1+\Gamma_2\right)+\Sigma_2\right)^{-1}\left(\frac{1}{\lambda} \Gamma_1 +\Sigma_2\right)\right)}}}\\
    & = \frac{\sqrt{\det{\left(\mathbbm{1}_{p\times p}+
    \frac{1}{\lambda}\Sigma_2^{-1}\left(\Gamma_1+\Gamma_2\right) \right)}}}{\det{\left(\mathbbm{1}_{p\times p} + \frac{1}{\lambda}\Sigma_2^{-1}\Gamma_1\right)}}.
\end{align*}
Since $\Sigma_2$ is positive definite, we have $\Sigma_2 = V_2 \Lambda_2 V_2^\top$ and $\Sigma_2 = \Sigma_2^{\frac{1}{2}}\Sigma_2^{\frac{1}{2}}$ with $\Sigma_2^{\frac{1}{2}} = V_2 \Lambda_2^{\frac{1}{2}} V_2^\top$. Then it follows 
\begin{align*}
    \Sigma_2^{-1}\Gamma_1 = \Sigma_2^{-\frac{1}{2}}\left( \Sigma_2^{-\frac{1}{2}}\Gamma_1 \Sigma_2^{-\frac{1}{2}}\right)\Sigma_2^{\frac{1}{2}}.
\end{align*}
Since $\Gamma_1$ and $\Sigma_2^{\frac{1}{2}}$ are symmetric, the matrix $M :=  \Sigma_2^{-\frac{1}{2}}\Gamma_1 \Sigma_2^{-\frac{1}{2}}$  is also symmetric. Also, the fact that $\Gamma_1$ is positive semi-definite implies that $M$ is positive semi-definite. It follows that $M = V_M \Lambda_M V_M^\top$ by its eigenvalue decomposition. Thus, we have
\begin{align*}
    \Sigma_2^{-1}\Gamma_1 = \left(\Sigma_2^{-\frac{1}{2}}V_M\right) \Lambda_M \left(\Sigma_2^{\frac{1}{2}}V_M\right)^\top := P \Lambda_M P^{-1}.
\end{align*}
It follows that
\begin{align*}
    \det{\left(\mathbbm{1}_{p\times p}+\frac{1}{\lambda}\Sigma_2^{-1}\Gamma_1\right)} =  \det{\left(\mathbbm{1}_{p\times p}+\frac{\Lambda_M}{\lambda}\right)} = \prod_{i=1}^p (1+\frac{k_i}{\lambda})
\end{align*}
with $k_1 \geq k_2 \geq \cdots \geq k_p \geq 0$ the ordered eigenvalues of $M=  \Sigma_2^{-\frac{1}{2}}\Gamma_1 \Sigma_2^{-\frac{1}{2}}$. Similarly, we have 
\begin{align*}
    \det{\left(\mathbbm{1}_{p\times p}+
    \frac{1}{\lambda}\Sigma_2^{-1}\left(\Gamma_1+\Gamma_2\right) \right)} = \prod_{i = 1}^p(1+\frac{h_i}{\lambda})
\end{align*}
with $h_1 \geq h_2 \geq \cdots \geq h_p \geq 0$ the ordered eigenvalues of $N :=  \Sigma_2^{-\frac{1}{2}}\left(\Gamma_1+\Gamma_2\right) \Sigma_2^{-\frac{1}{2}}$. Therefore, we can write that
\begin{align*}
    \frac{\text{Vol}_{\text{SSL}}}{\text{Vol}_{\text{OLS}}}
    = \frac{\sqrt{\prod_{i = 1}^p(1+\frac{h_i}{\lambda})}}{\prod_{i=1}^p(1+\frac{k_i}{\lambda})}.
\end{align*}
We start with the case where  $X\sim \mathcal{N}(\mu_X,\sigma_X^2 \mathbbm{1}_{p\times p})$ with $\mu_X \in \mathbb{R}^p\setminus \{0\}$ and $\sigma_X > 0$. For the sake of a less cumbersome notation, we will write $\mu$ for $\mu_X$. We can have that
\begin{align*}
    \Sigma_2 &= \frac{1}{\sigma_\epsilon^2}\left(\mu\mu^\top + \sigma_X^2\mathbbm{1}_{p\times p}\right) \\
    &= \frac{1}{\lVert\mu\rVert}\begin{pmatrix}
        \vrule & \vrule & & \vrule \\
        \mu &  v_1 & \cdots &  v_{p-1}\\
        \vrule & \vrule & & \vrule \\
    \end{pmatrix} \begin{pmatrix}
        \frac{\lVert\mu\rVert^2 + \sigma_X^2}{\sigma_\epsilon^2} & 0 & \cdots & 0 \\
        0 & \frac{\sigma_X^2}{\sigma_\epsilon^2}  & \cdots & 0\\
        \vdots & \vdots & & \vdots \\
        0 & 0 & \cdots & \frac{\sigma_X^2}{\sigma_\epsilon^2}
    \end{pmatrix} \left(\frac{1}{\lVert\mu\rVert}\begin{pmatrix}
        \vrule & \vrule & & \vrule \\
        \mu &  v_1 & \cdots &  v_{p-1}\\
        \vrule & \vrule & & \vrule \\
    \end{pmatrix} \right)^\top
\end{align*}
where $\frac{\mu}{\lVert\mu\rVert},\,\frac{v_1}{\lVert\mu\rVert},\,\cdots,\,\frac{v_{p-1}}{\lVert\mu\rVert}$ form an orthonormal basis of $\mathbb{R}^p$. It follows that 
\begin{align*}
    \Sigma_2^{-\frac{1}{2}} 
    &= \frac{1}{\lVert\mu\rVert}\begin{pmatrix}
        \vrule & \vrule & & \vrule \\
        \mu &  v_1 & \cdots &  v_{p-1}\\
        \vrule & \vrule & & \vrule \\
    \end{pmatrix} \begin{pmatrix}
        \frac{\sigma_\epsilon}{\sqrt{\lVert\mu\rVert^2 + \sigma_X^2}} & 0 & \cdots & 0 \\
        0 & \frac{\sigma_\epsilon}{\sigma_X}  & \cdots & 0\\
        \vdots & \vdots & & \vdots \\
        0 & 0 & \cdots & \frac{\sigma_\epsilon}{\sigma_X}
    \end{pmatrix} \left(\frac{1}{\lVert\mu\rVert}\begin{pmatrix}
        \vrule & \vrule & & \vrule \\
        \mu &  v_1 & \cdots &  v_{p-1}\\
        \vrule & \vrule & & \vrule \\
    \end{pmatrix} \right)^\top \\
    &= \frac{\sigma_\epsilon}{\lVert\mu\rVert^2}\left(\frac{1}{\sqrt{\lVert\mu\rVert^2+\sigma_X^2}}\mu\mu^\top+\frac{1}{\sigma_X}(\lVert\mu\rVert^2\mathbbm{1}_{p\times p}-\mu\mu^\top)\right) \\
    &= \frac{\sigma_\epsilon}{\lVert\mu\rVert^2}(\frac{1}{\sqrt{\lVert\mu\rVert^2+\sigma_X^2}}-\frac{1}{\sigma_X})\mu\mu^\top + \frac{\sigma_\epsilon}{\sigma_X}\mathbbm{1}_{p\times p}.
\end{align*}
Since $Y \sim \mathcal{N}(\beta_0^\top\mu, \sigma_X^2\lVert\beta_0\rVert^2 + \sigma_\epsilon^2)$, we have $f^Y(y,\beta) = \frac{1}{\sqrt{2\pi(\sigma_X^2\lVert\beta\rVert^2+\sigma_\epsilon^2)}}\exp{\left(-\frac{(y-\beta^\top \mu)^2}{2(\sigma_X^2\lVert\beta\rVert^2+\sigma_\epsilon^2)}\right)}$. Hence,
\begin{align*}
    \nabla_\beta f^Y(y,\beta) &= - \frac{\sigma_X^2\beta}{\sigma_X^2 \lVert \beta \rVert^2+\sigma_\epsilon^2} \frac{1}{\sqrt{2\pi(\sigma_X^2\lVert\beta\rVert^2+\sigma_\epsilon^2)}}\exp{\left(-\frac{(y-\beta^\top \mu)^2}{2(\sigma_X^2\lVert\beta\rVert^2+\sigma_\epsilon^2)}\right)} \\
    & \ \ \ +   \frac{1}{\sqrt{2\pi(\sigma_X^2\lVert\beta\rVert^2+\sigma_\epsilon^2)}} \left(\frac{y - \beta^\top\mu}{\sigma_X^2\lVert\beta\rVert^2+\sigma_\epsilon^2}\mu + \frac{(y-\beta^\top\mu)^2}{(\sigma_X^2\lVert\beta\rVert^2+\sigma_\epsilon^2)^2} \sigma_X^2 \beta\right) \exp{\left(-\frac{(y-\beta^\top \mu)^2}{2(\sigma_X^2\lVert\beta\rVert^2+\sigma_\epsilon^2)}\right)} \\
    & = f^Y(y,\beta) \left(\frac{y - \beta^\top\mu}{\sigma_X^2\lVert\beta\rVert^2+\sigma_\epsilon^2}\mu +\left( \frac{(y-\beta^\top\mu)^2}{(\sigma_X^2\lVert\beta\rVert^2+\sigma_\epsilon^2)^2} - \frac{1}{\sigma_X^2 \lVert \beta \rVert^2+\sigma_\epsilon^2}\right)\sigma_X^2\beta\right).
\end{align*}
Thus,
\begin{align*}
     &\frac{\left(\nabla_\beta f^Y(y,\beta)|_{\beta=\beta_0}\right)\left(\nabla_\beta f^Y(y,\beta)|_{\beta=\beta_0}\right)^\top}{f^Y(y)} \\
     &= f^Y(y) \left(\frac{y - \beta_0^\top\mu}{\sigma_X^2\lVert\beta_0\rVert^2+\sigma_\epsilon^2}\mu +\left( \frac{(y-\beta_0^\top\mu)^2}{(\sigma_X^2\lVert\beta_0\rVert^2+\sigma_\epsilon^2)^2} - \frac{1}{\sigma_X^2 \lVert \beta_0 \rVert^2+\sigma_\epsilon^2}\right)\sigma_X^2\beta_0\right) \\
     & \ \ \ \ \left(\frac{y - \beta_0^\top\mu}{\sigma_X^2\lVert\beta_0\rVert^2+\sigma_\epsilon^2}\mu +\left( \frac{(y-\beta_0^\top\mu)^2}{(\sigma_X^2\lVert\beta_0\rVert^2+\sigma_\epsilon^2)^2} - \frac{1}{\sigma_X^2 \lVert \beta_0 \rVert^2+\sigma_\epsilon^2}\right)\sigma_X^2\beta_0\right)^\top.
\end{align*}
Therefore, we can write that
\begin{align*}
    \Gamma_1 &= \int \frac{\left(\int x f'(y- \beta^\top_0 x) f^X(x) dx \right)\left(\int x^\top f'(y- \beta^\top_0 x) f^X(x) dx \right) }{f^Y(y)} dy \\
    &=\int \frac{\left(\nabla_\beta f^Y(y,\beta)|_{\beta=\beta_0}\right)\left(\nabla_\beta f^Y(y,\beta)|_{\beta=\beta_0}\right)^\top}{f^Y(y)} dy \\
    &= \int \left(\frac{y-\beta_0^\top \mu}{\sigma_X^2\lVert\beta_0\rVert^2+\sigma_\epsilon^2}\mu + \sigma_X^2\left(\left(\frac{y-\beta_0^\top \mu}{\sigma_X^2\lVert\beta_0\rVert^2+\sigma_\epsilon^2}\right)^2-\frac{1}{\sigma_X^2\lVert\beta_0\rVert^2 + \sigma_\epsilon^2}\right)\beta_0\right) \\
    & \ \ \ \  \left(\frac{y-\beta_0^\top \mu}{\sigma_X^2\lVert\beta_0\rVert^2+\sigma_\epsilon^2}\mu + \sigma_X^2\left(\left(\frac{y-\beta_0^\top \mu}{\sigma_X^2\lVert\beta_0\rVert^2+\sigma_\epsilon^2}\right)^2-\frac{1}{\sigma_X^2\lVert\beta_0\rVert^2 + \sigma_\epsilon^2}\right)\beta_0\right)^\top f^Y(y) dy \\
    & = \mathbb{E}_Y[\left(Y-\beta_0^\top\mu\right)^2]\frac{\mu\mu^\top}{\left(\sigma_X^2\lVert\beta_0\rVert^2+\sigma_\epsilon^2\right)^2}+\mathbb{E}_Y[\left(\frac{(Y-\beta_0^\top\mu)^2}{(\sigma_X^2\lVert\beta_0\rVert^2+\sigma_\epsilon^2)^2}-\frac{1}{\sigma_X^2\lVert\beta_0\rVert^2+\sigma_\epsilon^2}\right)^2] \sigma_X^4 \beta_0 \beta_0^\top \\
    & \ \ \ + \mathbb{E}_Y[\left(\frac{(Y-\beta_0^\top\mu)^2}{(\sigma_X^2\lVert\beta_0\rVert^2+\sigma_\epsilon^2)^2}-\frac{1}{\sigma_X^2\lVert\beta_0\rVert^2+\sigma_\epsilon^2}\right)\left(\frac{Y-\beta_0^\top\mu}{\sigma_X^2\lVert\beta_0\rVert^2+\sigma_\epsilon^2}\right)]\sigma_X^2\left(\beta_0\mu^\top+\mu\beta_0^\top\right)\\
    &= \frac{1}{\sigma_X^2\lVert\beta_0\rVert^2+\sigma_\epsilon^2}\mu\mu^\top + \frac{2\sigma_X^4}{\left(\sigma_X^2\lVert\beta_0\rVert^2+\sigma_\epsilon^2\right)^2}\beta_0\beta_0^\top.
\end{align*}
Similarly, 
\begin{align*}
    \Gamma_2 &= \int \left(\int \frac{f^\epsilon(y-\beta_0^\top x) (\nabla_\beta f^Y(y,\beta)|_{\beta = \beta_0})}{f^Y(y)} dy\right) \left(\int \frac{f^\epsilon(y-\beta_0^\top x) (\nabla_\beta f^Y(y,\beta)|_{\beta = \beta_0})}{f^Y(y)} dy\right)^\top f^X(x) dx \\
    &= \int \left(\frac{\beta_0^\top(x-\mu)}{\sigma_X^2\lVert\beta_0\rVert^2+\sigma_\epsilon^2}\mu + \left(\frac{\sigma_\epsilon^2}{(\sigma_X^2\lVert\beta_0\rVert^2+\sigma_\epsilon^2)^2} -\frac{1}{\sigma_X^2\lVert\beta_0\rVert^2+\sigma_\epsilon^2} + \frac{(\beta_0^\top(x-\mu))^2}{(\sigma_X^2\lVert\beta_0\rVert^2+\sigma_\epsilon^2)^2} \right) \sigma_X^2\beta_0\right) \\
    & \ \ \ \ \left(\frac{\beta_0^\top(x-\mu)}{\sigma_X^2\lVert\beta_0\rVert^2+\sigma_\epsilon^2}\mu + \left(\frac{\sigma_\epsilon^2}{(\sigma_X^2\lVert\beta_0\rVert^2+\sigma_\epsilon^2)^2} -\frac{1}{\sigma_X^2\lVert\beta_0\rVert^2+\sigma_\epsilon^2} + \frac{(\beta_0^\top(x-\mu))^2}{(\sigma_X^2\lVert\beta_0\rVert^2+\sigma_\epsilon^2)^2} \right) \sigma_X^2\beta_0 \right)^\top f^X(x) dx \\
    & = \frac{\mathbb{E}_X[(\beta_0^\top(X-\mu))^2]}{(\sigma_X^2\lVert\beta_0\rVert^2+\sigma_\epsilon^2)^2}\mu\mu^\top + \left(\frac{\sigma_\epsilon^2}{(\sigma_X^2\lVert\beta_0\rVert^2+\sigma_\epsilon^2)^2}-\frac{1}{\sigma_X^2\lVert\beta_0\rVert^2+\sigma_\epsilon^2}\right)^2\sigma_X^4\beta_0\beta_0^\top\\
    & \ \ \ + 2\left(\frac{\sigma_\epsilon^2}{(\sigma_X^2\lVert\beta_0\rVert^2+\sigma_\epsilon^2)^2}-\frac{1}{\sigma_X^2\lVert\beta_0\rVert^2+\sigma_\epsilon^2}\right) \frac{\mathbb{E}_X[(\beta_0^\top (X-\mu))^2]}{(\sigma_X^2\lVert\beta_0\rVert^2+\sigma_\epsilon^2)^2}\sigma_X^4 \beta_0\beta_0^\top + \frac{\mathbb{E}_X[(\beta_0^\top (X-\mu))^4]}{(\sigma_X^2\lVert\beta_0\rVert^2+\sigma_\epsilon^2)^4}\sigma_X^4 \beta_0\beta_0^\top \\
    &= \frac{\sigma_X^2\lVert\beta_0\rVert^2}{(\sigma_X^2\lVert\beta_0\rVert^2+\sigma_\epsilon^2)^2}\mu\mu^\top +   \frac{2\sigma_X^8\lVert\beta_0\rVert^4}{(\sigma_X^2\lVert\beta_0\rVert^2+\sigma_\epsilon^2)^4}\beta_0\beta_0^\top.
\end{align*}
It follows that
\begin{align*}
    \Gamma_1+\Gamma_2 = \left(\frac{1}{\sigma_X^2\lVert\beta_0\rVert^2+\sigma_\epsilon^2}+\frac{\sigma_X^2\lVert\beta_0\rVert^2}{(\sigma_X^2\lVert\beta_0\rVert^2+\sigma_\epsilon^2)^2}\right)\mu\mu^\top+\left( \frac{2\sigma_X^4}{\left(\sigma_X^2\lVert\beta_0\rVert^2+\sigma_\epsilon^2\right)^2} + \frac{2\sigma_X^8\lVert\beta_0\rVert^4}{(\sigma_X^2\lVert\beta_0\rVert^2+\sigma_\epsilon^2)^4}  \right) \beta_0\beta_0^\top.
\end{align*}
The calculations above imply that
\begin{align*}
    M &= \Sigma_2^{-\frac{1}{2}} \Gamma_1 \Sigma_2^{-\frac{1}{2}} \\
    & = \left(a\mu\mu^\top + b \mathbbm{1}_{p\times p}\right)\left(c\mu\mu^\top + d \beta_0\beta_0^\top\right)\left(a\mu\mu^\top + b \mathbbm{1}_{p\times p}\right) \\
    & = \left(c(a\lVert\mu\rVert^2+b)^2+a^2d(\mu^\top\beta_0)^2\right)\mu\mu^\top + \left(abd(\mu^\top\beta_0)\right)\left(\beta_0\mu^\top + \mu\beta_0^\top\right) + b^2d\beta_0\beta_0^\top\\
\end{align*}
with 
\begin{align*}
    a = \frac{\sigma_\epsilon}{\lVert\mu\rVert^2}\left(\frac{1}{\sqrt{\lVert\mu\rVert^2+\sigma_X^2}}-\frac{1}{\sigma_X}\right), \ \text{and} \ 
    b = \frac{\sigma_\epsilon}{\sigma_X}
\end{align*}
\begin{align*}
    c = \frac{1}{\sigma_X^2\lVert\beta_0\rVert^2+\sigma_\epsilon^2}, \ \text{and} \ 
    d = \frac{2\sigma_X^4}{\left(\sigma_X^2\lVert\beta_0\rVert^2+\sigma_\epsilon^2\right)^2}.
\end{align*}
similarly,
\begin{align*}
    N &= \Sigma_2^{-\frac{1}{2}} (\Gamma_1 + \Gamma_2)\Sigma_2^{-\frac{1}{2}} \\
    & = \left(a\mu\mu^\top + b \mathbbm{1}_{p\times p}\right)\left(e\mu\mu^\top + f \beta_0\beta_0^\top\right)\left(a\mu\mu^\top + b \mathbbm{1}_{p\times p}\right) \\
    & = \left(e(a\lVert\mu\rVert^2+b)^2+a^2f(\mu^\top\beta_0)^2\right)\mu\mu^\top + \left(abf(\mu^\top\beta_0)\right)\left(\beta_0\mu^\top + \mu\beta_0^\top\right) + b^2f\beta_0\beta_0^\top\\
\end{align*}
with
\begin{align*}
    e = \frac{1}{\sigma_X^2\lVert\beta_0\rVert^2+\sigma_\epsilon^2}+\frac{\sigma_X^2\lVert\beta_0\rVert^2}{(\sigma_X^2\lVert\beta_0\rVert^2+\sigma_\epsilon^2)^2}, \ \text{and} \
    f = \frac{2\sigma_X^4}{\left(\sigma_X^2\lVert\beta_0\rVert^2+\sigma_\epsilon^2\right)^2} + \frac{2\sigma_X^8\lVert\beta_0\rVert^4}{(\sigma_X^2\lVert\beta_0\rVert^2+\sigma_\epsilon^2)^4}. 
\end{align*}
We need to calculate $\prod_{i=1}^p(1+\frac{h_i}{\lambda})$ where we recall that $h_i, i=1, \ldots, p$ are the eigenvalues of the matrix $N$ which is of the form 
$$N = A\mu\mu^\top + B(\beta_0\mu^\top + \mu\beta_0^\top) + C\beta_0\beta_0^\top.$$ 
Now, note that if $v$ is the eigenvector whose corresponding eigenvalue $h$ is not equal to 0, then
\begin{align*}
    Nv = \left(A\mu^\top v + B\beta_0^\top v\right) \mu + \left(B\mu^\top v + C\beta_0^\top v\right) \beta_0 = hv
\end{align*}
which means that $v \in \operatorname{span}\{\mu, \beta_0\}$. We can write $v = \alpha \mu + \gamma \beta_0$, and hence
\begin{align*}
   & \left( A\mu\mu^\top + B(\beta_0\mu^\top + \mu\beta_0^\top) + C\beta_0\beta_0^\top\right) \left(\alpha \mu + \gamma \beta_0\right) \\
   & \ = \alpha \left(A \lVert\mu\rVert^2 + B(\mu^\top \beta_0)\right) \mu + \gamma \left(A(\mu^\top\beta_0)  + B\lVert\beta_0\rVert^2 \right)\mu \\
   & \ \ \ + \alpha \left(B\lVert\mu\rVert^2+C(\mu^\top\beta_0)\right) \beta_0 + \gamma \left(B(\mu^\top\beta_0)+C\lVert\beta_0\rVert^2\right)\beta_0 \\
   & \ = h \alpha \mu + h \gamma \beta_0.
\end{align*}
We can re-write the above equation as
\begin{align*}
    \begin{pmatrix}
        A \lVert\mu\rVert^2 + B(\mu^\top \beta_0) & A(\mu^\top\beta_0)  + B\lVert\beta_0\rVert^2\\
        B\lVert\mu\rVert^2+C(\mu^\top\beta_0) & B(\mu^\top\beta_0)+C\lVert\beta_0\rVert^2 
    \end{pmatrix}\begin{pmatrix}
       \alpha \\
       \gamma
    \end{pmatrix} = h \begin{pmatrix}
       \alpha \\
       \gamma
    \end{pmatrix}
    .
\end{align*}
Thus, the nonzero eigenvalues $h_i$'s are also eigenvalues of the above matrix, and
\begin{align*}
    h^2 - \left(A\lVert\mu\rVert^2 + C\lVert\beta_0\rVert^2 + 2B(\mu^\top\beta_0)\right)h + (AC -B^2)\left(\lVert\mu\rVert^2\lVert\beta_0\rVert^2 - (\mu^\top\beta_0)^2\right) = 0.
\end{align*}
Using the Vieta's formulas applied to quadratic polynomials we obtain
\begin{align*}
&\prod_{i=1}^p(1+\frac{h_i}{\lambda})\\
&= 1 + \frac{h_1+h_2}{\lambda} + \frac{h_1h_2}{\lambda^2}  \\
&=1+\frac{1}{\lambda}\left(A\lVert\mu\rVert^2 + C\lVert\beta_0\rVert^2 + 2B(\mu^\top\beta_0)\right) + \frac{1}{\lambda^2}(AC -B^2)\left(\lVert\mu\rVert^2\lVert\beta_0\rVert^2 - (\mu^\top\beta_0)^2\right)
\end{align*}
 When the above results are applied to $M = \Sigma_2^{-\frac{1}{2}} \Gamma_1 \Sigma_2^{-\frac{1}{2}}$, it follows that
 \begin{align*}
     &\prod_{i=1}^p (1+\frac{k_i}{\lambda}) \\
     & = 1 + \frac{1}{\lambda} \bigg(\big(c(a\lVert\mu\rVert^2+b)^2+a^2d(\mu^\top\beta_0)^2\big) \lVert\mu\rVert^2 + b^2d \lVert\beta_0\rVert^2 + 2\left(abd(\mu^\top\beta_0)\right) (\mu^\top\beta_0 )\bigg) \\
     & \ \ \ +\frac{1}{\lambda^2} \bigg(\big(c(a\lVert\mu\rVert^2+b)^2+a^2d(\mu^\top\beta_0)^2\big) b^2d - \big(abd(\mu^\top\beta_0)\big)^2\bigg)\left(\lVert\mu\rVert^2\lVert\beta_0\rVert^2 - (\mu^\top\beta_0)^2\right) \\
      & = 1 + \frac{1}{\lambda} \bigg(\big(c(a\lVert\mu\rVert^2+b)^2+a^2d(\mu^\top\beta_0)^2\big) \lVert\mu\rVert^2 + b^2d \lVert\beta_0\rVert^2 + 2\left(abd(\mu^\top\beta_0)\right) (\mu^\top\beta_0 )\bigg) \\
     & \ \ \ +\frac{1}{\lambda^2} \big(b^2dc(a\lVert\mu\rVert^2+b)^2 \big)\left(\lVert\mu\rVert^2\lVert\beta_0\rVert^2 - (\mu^\top\beta_0)^2\right).
 \end{align*}
Now, note that
\begin{align*}
&\big(c(a\lVert\mu\rVert^2+b)^2+a^2d(\mu^\top\beta_0)^2\big) \lVert\mu\rVert^2 \\
& = \bigg(\frac{1}{\frac{\sigma_X^2\lVert\beta_0\rVert^2}{\sigma_\epsilon^2}+1}\frac{1}{\lVert\mu\rVert^2 + \sigma_X^2} + 2\frac{(\mu^\top\beta_0)^2}{\lVert\mu\rVert^4}\left(\frac{1}{\sqrt{\lVert\mu\rVert^2+\sigma_X^2}}-\frac{1}{\sigma_X}\right)^2\frac{\frac{\sigma_X^4}{\sigma_\epsilon^2}}{(\frac{\sigma_X^2\lVert\beta_0\rVert^2}{\sigma_\epsilon^2}+1)^2} \bigg)\lVert\mu\rVert^2
\end{align*}
\begin{align*}
    b^2d \lVert\beta_0\rVert^2 = 2 \frac{\frac{\sigma_X^2\lVert\beta_0\rVert^2}{\sigma_\epsilon^2}}{(\frac{\sigma_X^2\lVert\beta_0\rVert^2}{\sigma_\epsilon^2}+1)^2}
\end{align*}
\begin{align*}
    2\left(abd(\mu^\top\beta_0)\right) (\mu^\top\beta_0 ) = 4 \frac{(\mu^\top\beta_0)^2}{\lVert\mu\rVert^2}\left(\frac{1}{\sqrt{\lVert\mu\rVert^2+\sigma_X^2}} - \frac{1}{\sigma_X}\right)\frac{\frac{\sigma_X^3}{\sigma_\epsilon^2}}{(\frac{\sigma_X^2\lVert\beta_0\rVert^2}{\sigma_\epsilon^2}+1)^2}
\end{align*}
\begin{align*}
    b^2dc(a\lVert\mu\rVert^2+b)^2 &=   2 \frac{\frac{\sigma_X^2}{\sigma_\epsilon^2}}{(\frac{\sigma_X^2\lVert\beta_0\rVert^2}{\sigma_\epsilon^2}+1)^2}\frac{1}{\frac{\sigma_X^2\lVert\beta_0\rVert^2}{\sigma_\epsilon^2}+1}\frac{1}{\lVert\mu\rVert^2 + \sigma_X^2} \\
    &=2 \frac{\frac{\sigma_X^2}{\sigma_\epsilon^2}}{(\frac{\sigma_X^2\lVert\beta_0\rVert^2}{\sigma_\epsilon^2}+1)^3}\frac{1}{\lVert\mu\rVert^2 + \sigma_X^2}.
\end{align*}
Therefore,
\begin{align*}
    \prod_{i=1}^p (1+\frac{k_i}{\lambda}) &=  1 + \frac{1}{\lambda} \bigg( \frac{1}{\eta+1} \frac{\lVert\mu\rVert^2}{\lVert\mu\rVert^2 + \sigma_X^2} + 2 \frac{\eta}{(\eta+1)^2}  \\
    & \ \ \ + \frac{2}{(\eta+1)^2} \frac{(\mu^\top\beta_0)^2}{\lVert\mu\rVert^2}\frac{\sigma_X^3}{\sigma_\epsilon^2} (\frac{1}{\sqrt{\lVert\mu\rVert^2+\sigma_X^2}} -\frac{1}{\sigma_X}) (\sigma_X(\frac{1}{\sqrt{\lVert\mu\rVert^2 + \sigma_X^2}}-\frac{1}{\sigma_X}) +2)  \bigg) \\
    & \ \ \ + \frac{2}{\lambda^2} \bigg( \lVert\mu\rVert^2\lVert\beta_0\rVert^2 -(\mu^\top\beta_0)^2\bigg)\frac{\frac{\sigma_X^2}{\sigma_\epsilon^2}}{(\eta+1)^3} \frac{1}{\lVert\mu\rVert^2+\sigma_X^2}  \\
    &= 1 + \frac{1}{\lambda} \bigg( \frac{1}{\eta+1} \frac{\lVert\mu\rVert^2}{\lVert\mu\rVert^2 + \sigma_X^2} + 2 \frac{\eta}{(\eta+1)^2} - 2 \frac{(\mu^\top\beta_0)^2}{(\eta+1)^2}\frac{\sigma_X^2}{\sigma_\epsilon^2}\frac{1}{\lVert\mu\rVert^2+ \sigma_X^2}  \bigg) \\
    & \ \ \ + \frac{2}{\lambda^2} \bigg( \lVert\mu\rVert^2\lVert\beta_0\rVert^2 -(\mu^\top\beta_0)^2\bigg)\frac{\frac{\sigma_X^2}{\sigma_\epsilon^2}}{(\eta+1)^3} \frac{1}{\lVert\mu\rVert^2+\sigma_X^2}  
\end{align*}
with $\eta = \frac{\sigma_X^2\lVert\beta_0\rVert^2}{\sigma_\epsilon^2}$. Similarly, we can also get results for $N = \Sigma_2^{-\frac{1}{2}} (\Gamma_1+\Gamma_2) \Sigma_2^{-\frac{1}{2}}$. \\

 From the calculations above, we can conclude that if $\epsilon \sim \mathcal{N}(0,\sigma_\epsilon^2)$ and $X \sim \mathcal{N}(\mu, \sigma_X^2\mathbbm{1}_{p\times p})$ and if $\eta := \frac{\sigma_X^2\lVert\beta_0\rVert^2}{\sigma_\epsilon^2}$, then the volume ratio is given by
\begin{align*}
    \frac{\text{Vol}_{\text{SSL}}}{\text{Vol}_{\text{OLS}}}  &= \frac{\sqrt{R_N}}{R_D}
\end{align*}
with 
\begin{align*}
    R_N & = 1 + \frac{1}{\lambda} \bigg( \frac{1}{\eta+1} \frac{\lVert\mu\rVert^2}{\lVert\mu\rVert^2 + \sigma_X^2}(1+\frac{\eta}{\eta+1}) + 2 \frac{\eta}{(\eta+1)^2} (1+\frac{\eta^2}{(\eta+1)^2}) \\
    & \ \ \ - 2 \frac{(\mu^\top\beta_0)^2}{(\eta+1)^2}\frac{\sigma_X^2}{\sigma_\epsilon^2}\frac{1}{\lVert\mu\rVert^2+ \sigma_X^2} (1+\frac{\eta^2}{(\eta+1)^2})  \bigg) \\
    & \ \ \ + \frac{2}{\lambda^2} \bigg( \lVert\mu\rVert^2\lVert\beta_0\rVert^2 -(\mu^\top\beta_0)^2\bigg)\frac{\frac{\sigma_X^2}{\sigma_\epsilon^2}}{(\eta+1)^3} \frac{1}{\lVert\mu\rVert^2+\sigma_X^2} (1+\frac{\eta^2}{(\eta+1)^2})(1+\frac{\eta}{\eta+1})
\end{align*}
and
\begin{align*}
    R_D &= 1 + \frac{1}{\lambda} \bigg( \frac{1}{\eta+1} \frac{\lVert\mu\rVert^2}{\lVert\mu\rVert^2 + \sigma_X^2} + 2 \frac{\eta}{(\eta+1)^2} - 2 \frac{(\mu^\top\beta_0)^2}{(\eta+1)^2}\frac{\sigma_X^2}{\sigma_\epsilon^2}\frac{1}{\lVert\mu\rVert^2+ \sigma_X^2}  \bigg) \\
    & \ \ \ + \frac{2}{\lambda^2} \bigg( \lVert\mu\rVert^2\lVert\beta_0\rVert^2 -(\mu^\top\beta_0)^2\bigg)\frac{\frac{\sigma_X^2}{\sigma_\epsilon^2}}{(\eta+1)^3} \frac{1}{\lVert\mu\rVert^2+\sigma_X^2}.  
\end{align*}
If $X \sim \mathcal{N}(\mu, \Sigma_X)$ with $\Sigma_X$ positive definite, then we can also re-write the linear model as
\begin{align*}
    Y = \beta_0^\top X +\epsilon = \alpha_0^\top Z + \epsilon
\end{align*}
where $\alpha_0 = \Sigma_X^{\frac{1}{2}}\beta_0$ and $Z \sim \mathcal{N}(\theta, \mathbbm{1}_{p\times p})$ with $\theta = \Sigma_X^{-\frac{1}{2}}\mu$. By the previous results, we have for $\lVert\beta_0\rVert \neq 0$ and $\lVert\mu\rVert \neq 0$,
 \begin{align*}
     & \frac{\text{Vol}_{\text{SSL}}}{\text{Vol}_{\text{OLS}}} \nonumber \\
     &=  \frac{\sqrt{1+\frac{1}{\lambda}\left(\frac{1}{1+\eta}\frac{1}{1+\rho}(1+\frac{\eta}{\eta+1})+\frac{2\eta}{(\eta+1)^2}(1-\frac{\zeta^2}{1+\rho})(1+\frac{\eta^2}{(\eta+1)^2})\right) + \frac{1}{\lambda^2}\frac{1-\zeta^2}{1+\rho}\frac{2\eta}{(\eta+1)^3}(1+\frac{\eta^2}{(\eta+1)^2})(1+\frac{\eta}{\eta+1}) }}{1+\frac{1}{\lambda}\left(\frac{1}{1+\eta}\frac{1}{1+\rho}+\frac{2\eta}{(\eta+1)^2}(1-\frac{\zeta^2}{1+\rho})\right) + \frac{1}{\lambda^2}\frac{1-\zeta^2}{1+\rho}\frac{2\eta}{(\eta+1)^3}}
\end{align*}
with $\zeta:= \frac{\theta^\top\alpha_0}{\lVert\theta\rVert\lVert\alpha_0\rVert} =\frac{\mu^\top\beta_0}{\sqrt{\beta_0^\top \Sigma_X \beta_0}\sqrt{\mu^\top \Sigma_X^{-1}\mu}}$, $\rho :=\frac{1}{\lVert\theta\rVert^2} =\frac{1}{\mu^\top \Sigma_X^{-1}\mu}$ and $\eta := \frac{\lVert\alpha_0\rVert^2}{\sigma_\epsilon^2} =\frac{\beta_0^\top \Sigma_X\beta_0}{\sigma_\epsilon^2}$.

In the special case where $\mu = 0 \in \mathbb{R}^p$, it can be easily established that the relative efficiency is given by
\begin{align*}
     \frac{\text{Vol}_{\text{SSL}}}{\text{Vol}_{\text{OLS}}}= \frac{\sqrt{1+\frac{2}{\lambda}\left(\frac{\eta}{(\eta+1)^2} + \frac{\eta^3}{(\eta+1)^4}\right)}}{1+\frac{2}{\lambda}\frac{\eta}{(\eta+1)^2}} \quad \text{with $\eta =\frac{\beta_0^\top \Sigma_X\beta_0}{\sigma_\epsilon^2}$.} 
\end{align*}

\end{proof}

\section*{Appendix B: Auxiliary results and their proofs}

\begin{theorem}\label{Remainder}
Under the assumptions (A0)-(A4), it holds that
\begin{eqnarray*}
 \frac{1}{n}\sum_{j=1}^n \frac{\int x f'(\tilde Y_j - \beta^\top_0 x) d\mathbb P^{\tilde {X}}_n(x)}{\int f(\tilde Y_j - \beta^\top_0 x) d\mathbb P^{\tilde {X}}_n(x)} & = &  \frac{1}{n}\sum_{j=1}^n \frac{\int x f'(\tilde Y_j - \beta^\top_0 x) d\mathbb P^{\tilde {X}}(x)}{\int f(\tilde Y_j - \beta^\top_0 x) d\mathbb P^{\tilde {X}}(x)}  + \int \psi(x) d(\mathbb P^{\tilde X}_n - \mathbb P^{\tilde X}) (x)   \\
&&   \  + \  o_{\mathbb P^{\tilde X} \otimes\mathbb P^{\tilde Y} }\left(\frac{1}{\sqrt n} \right),
\end{eqnarray*}
where
$$
\psi(x) =  -   \int \frac{\int z f'(y- \beta^\top_0 z) d \mathbb P^{\tilde X}(z)}{\int  f(y- \beta^\top_0 z) d \mathbb P^{\tilde X}(z)} f(y- \beta^\top_0 x)  dy. 
$$
Furthermore, as $n \to \infty$ we have that
\begin{eqnarray*}
\sqrt n  \frac{1}{n}\sum_{j=1}^n \frac{\int x f'(\tilde Y_j - \beta^\top_0 x) d\mathbb P^{\tilde {X}}(x)}{\int f(\tilde Y_j - \beta^\top_0 x) d\mathbb P^{\tilde {X}}(x)}  =  \sqrt n \int  \frac{\int x f'(y - \beta^\top_0 x) d\mathbb P^{\tilde{X}}(x)}{\int f(y - \beta^\top_0 x)  d\mathbb P^{\tilde {X}}(x)} d(\mathbb P^{\tilde Y}_n - \mathbb P^{\tilde Y})(y)  \to_d   W_1    
\end{eqnarray*}
and 
\begin{eqnarray*}
\sqrt n \int \psi(x)  d(\mathbb P^{\tilde X}_n - \mathbb P^{\tilde X}) (x)  \to_d   W_2    
\end{eqnarray*}
where $W_1$ and $W_2$ are independent centered Gaussian $p$-dimensional vectors with covariance matrices 
\begin{eqnarray}\label{Gamma1}
\Gamma_1 =  \int  \frac{\left(\int x f'(y- \beta^\top_0 x) f^X(x) dx \right)\left(\int x^\top f'(y- \beta^\top_0 x) f^X(x) dx \right) }{f^Y(y) } dy
\end{eqnarray}
and 
\begin{eqnarray}\label{Gamma2}
\hspace{-0.8cm}\Gamma_2 &= & \int \left(\int \frac{ f(y- \beta^\top_0 x) \int z f'(y- \beta^\top_0 z) f^X(z)dz  }{f^Y(y)} dy\right) \left(\int \frac{ f(y- \beta^\top_0 x)  \int z^\top f'(y- \beta^\top_0 z) f^X(z)dz}{f^Y(y)} dy\right) f^X(x)dx \notag \\
&& 
\end{eqnarray}
respectively. 

\end{theorem}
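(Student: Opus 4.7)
The strategy is to linearize the ratio $g_n(y) := A_n(y)/B_n(y)$ around $g(y) := A(y)/B(y)$, where $A_n(y) = \int x f'(y - \beta_0^\top x)\, d\mathbb P_n^{\tilde X}(x)$, $B_n(y) = \int f(y - \beta_0^\top x)\, d\mathbb P_n^{\tilde X}(x)$, and $A(y)$, $B(y) = f^{\tilde Y}(y)$ denote their population counterparts. A first order Taylor expansion gives
\begin{equation*}
g_n(y) - g(y) = \frac{A_n(y) - A(y)}{B(y)} - \frac{A(y)\bigl(B_n(y) - B(y)\bigr)}{B(y)^2} + R_n(y),
\end{equation*}
where $R_n(y)$ collects the quadratic-in-$(A_n - A, B_n - B)$ contributions divided by $B B_n$. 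Averaging each leading term over $\tilde Y_1, \ldots, \tilde Y_n$ and interchanging the empirical $\tilde Y$-average with the signed measure $\mathbb P_n^{\tilde X} - \mathbb P^{\tilde X}$ via Fubini yields, respectively, $\int \phi_n(x)\, d(\mathbb P_n^{\tilde X} - \mathbb P^{\tilde X})(x)$ and $-\int \chi_n(x)\, d(\mathbb P_n^{\tilde X} - \mathbb P^{\tilde X})(x)$, where I set $\phi_n(x) := n^{-1}\sum_j x f'(\tilde Y_j - \beta_0^\top x)/B(\tilde Y_j)$ and $\chi_n(x) := n^{-1}\sum_j A(\tilde Y_j) f(\tilde Y_j - \beta_0^\top x)/B(\tilde Y_j)^2$.

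The key observation is that $\mathbb E_{\tilde Y}[\phi_n(x)] = x\int f'(y - \beta_0^\top x)\,dy = 0$ and $\mathbb E_{\tilde Y}[\chi_n(x)] = \int A(y)f(y - \beta_0^\top x)/B(y)\,dy = -\psi(x)$. Using the independence of the $\tilde X$- and $\tilde Y$-samples, I would condition on $\{\tilde Y_j\}$ and exploit a conditional-variance argument: the cross terms $j\ne j'$ inside $\mathbb E_{\tilde X}[\phi_n(\tilde X)^2]$ vanish precisely because of the identity $\int f'(y-\beta_0^\top x)\,dy = 0$, so $\mathbb E[\phi_n(\tilde X)^2] = O(n^{-1})$ and therefore $\int \phi_n\, d(\mathbb P_n^{\tilde X} - \mathbb P^{\tilde X}) = O_P(n^{-1})$; an analogous calculation gives $\int (\chi_n + \psi)\, d(\mathbb P_n^{\tilde X} - \mathbb P^{\tilde X}) = O_P(n^{-1})$. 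After subtracting, the only surviving linear piece is $-\int (-\psi)\, d(\mathbb P_n^{\tilde X} - \mathbb P^{\tilde X}) = \int \psi\, d(\mathbb P_n^{\tilde X} - \mathbb P^{\tilde X})$, which is exactly the middle term in the stated expansion. The remainder $n^{-1}\sum_j R_n(\tilde Y_j)$ is bounded using the schematic estimates $|R_n(y)| \lesssim |A_n(y) - A(y)|\,|B_n(y) - B(y)|/(B(y) B_n(y)) + |A_n(y)|(B_n(y) - B(y))^2/(B(y)^2 B_n(y))$ and controlled via the envelope and VC-subgraph/bracketing-entropy machinery developed in the proof of Theorem \ref{Consis}, together with Assumption (A3) on $X$.

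For the weak convergence of $W_1$ and $W_2$, both $g(\tilde Y_j)$ and $\psi(\tilde X_i)$ have mean zero (using Fubini and $\int f'(y - \beta_0^\top x)\,dy = 0$), and a direct calculation of $\mathbb E[g(\tilde Y)g(\tilde Y)^\top]$ and $\mathbb E[\psi(\tilde X)\psi(\tilde X)^\top]$ recovers $\Gamma_1$ and $\Gamma_2$ as defined in (\ref{Gamma1}) and (\ref{Gamma2}); the classical multivariate CLT then delivers the two Gaussian limits, and independence $W_1 \perp W_2$ follows immediately from the independence of the samples $\{\tilde Y_j\}$ and $\{\tilde X_i\}$. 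The main obstacle I anticipate is the uniform-in-$y$ control of the remainder $R_n(y)$: since $B(y) = f^{\tilde Y}(y)$ decays like $\exp(-c|y|^\alpha)$ at infinity, uniform lower bounds on $B_n(y)/B(y)$ must be established on a high-probability set with tail contributions absorbed by the envelope estimates from Theorem \ref{Consis}; this balancing between the tail decay of $f$ afforded by (A0) and the polynomial growth of the numerators is exactly where the compact support assumption (A3) and the smoothness provided by (A4) are needed to secure the $o_P(n^{-1/2})$ rate.
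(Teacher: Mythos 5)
Your linearization and the identification of the leading terms are correct: the first-order pieces $\int\phi_n\,d(\mathbb P_n^{\tilde X}-\mathbb P^{\tilde X})$ and $-\int\chi_n\,d(\mathbb P_n^{\tilde X}-\mathbb P^{\tilde X})$, the computations $\mathbb E_{\tilde Y}[\phi_n(x)]=0$ and $\mathbb E_{\tilde Y}[\chi_n(x)]=-\psi(x)$, and the covariance calculations recovering $\Gamma_1$ and $\Gamma_2$ (together with the independence of $W_1$ and $W_2$) all check out. Your conditional-variance/Fubini argument showing that $\int(\phi_n-\mathbb E_{\tilde Y}\phi_n)\,d(\mathbb P_n^{\tilde X}-\mathbb P^{\tilde X})$ and $\int(\chi_n+\psi)\,d(\mathbb P_n^{\tilde X}-\mathbb P^{\tilde X})$ are $O_P(n^{-1})$ is a genuinely different, more elementary route than the paper's for these terms: the paper keeps the empirical denominator inside its kernel $m_n(x,y)$, integrates the population-denominator part $\tilde m$ exactly against $f^Y$ to produce $\psi$, and controls the residual random functions $d_n(x)$ and $s_n(x)$ by showing they lie with high probability in a H\"older ball $C^{p+1}(\overline{\mathcal B}(0,B))_D$ and invoking the uniform entropy bound for such classes. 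Your second-moment argument, which uses only the independence of the two samples and the identity $\int f'(y-\beta_0^\top x)\,dy=0$, bypasses that machinery for the linear terms and even yields the sharper rate $O_P(n^{-1})$ there.

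The gap is in the remainder. You need $\frac{1}{n}\sum_j R_n(\tilde Y_j)=o_P(n^{-1/2})$, where $R_n(y)$ is quadratic in $(A_n-A,B_n-B)$ and carries the factor $1/(B(y)^2B_n(y))$; you correctly flag this as the main obstacle but then only assert that it is handled by the envelope and entropy machinery of Theorem \ref{Consis}. That machinery, as developed there, delivers $O_P(n^{-1/2})$ bounds for quantities that are linear in $\mathbb G_n^{\tilde X}$; it does not by itself give the $o_P(n^{-1/2})$ rate for a term that is a product of two empirical fluctuations evaluated at the random points $\tilde Y_j$, uniformly over $j$ and including the tails where $B(y)=f^{\tilde Y}(y)$ decays like $\exp(-c\vert y\vert^\alpha)$ and no uniform lower bound on $B_n(y)/B(y)$ is available. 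This is precisely where the paper's proof spends most of its effort: it normalizes $\int f(y-\beta_0^\top\cdot)\,d\mathbb G_n^{\tilde X}$ by its standard deviation $\sigma(y)$ and applies Chebyshev pointwise in $y$, derives the explicit derivative bounds in (\ref{Dk}) and (\ref{boundderiv}) under (A4), constructs the integrable envelopes $G_1,G_2$ of (\ref{G1}) and (\ref{G2}), and only then runs the H\"older-class entropy argument on the resulting random functions of $x$. Without an argument of comparable substance for your $R_n$ — in particular a quantitative treatment of the small-denominator region and of the uniformity over the random evaluation points — the proof is incomplete at its hardest step.
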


\bigskip
\bigskip

\begin{proof}
We have that 
\begin{eqnarray*}
\sqrt n \frac{1}{n}\sum_{j=1}^n \frac{\int x f'(\tilde Y_j - \beta^\top_0 x) d\mathbb P^{\tilde {X}}(x)}{\int f(\tilde Y_j - \beta^\top_0 x) d\mathbb P^{\tilde {X}}(x)}  =   \int   \frac{\int x f'(y - \beta^\top_0 x) d\mathbb P^{\tilde {X}}(x)}{\int f(y - \beta^\top_0 x) d\mathbb P^{\tilde {X}}(x)} d \mathbb G^{\tilde{Y}}_n(y)
\end{eqnarray*}
since
\begin{eqnarray*}
   \mathbb E\left[\frac{\int x f'(Y - \beta^\top_0 x) d\mathbb P^{\tilde {X}}(x)}{\int f(Y - \beta^\top_0 x) d\mathbb P^{\tilde {X}}(x)} \right]   & = &  \int \frac{\int x f'(y - \beta^\top_0 x) d\mathbb P^{\tilde {X}}(x)}{\int f(y - \beta^\top_0 x) d\mathbb P^{\tilde {X}}(x)}  d\mathbb P^{\tilde Y}(y) \\
   & = &  \int \frac{\int x f'(y - \beta^\top_0 x) d\mathbb P^{\tilde {X}}(x)}{\int f(y - \beta^\top_0 x) d\mathbb P^{\tilde {X}}(x)}  \int f(y - \beta^\top_0 x) d\mathbb P^{\tilde {X}}(x) dy \\
   & = &  \int x f'(y- \beta^\top_0 x) d\mathbb P^{\tilde X}(x) dy  \\
   & = & \mathbb E(X) \int f'(t) dt = 0
 \end{eqnarray*}
using the change of variable $t = y-\beta^\top_0 x$. Then,
 \begin{eqnarray*}
\sqrt n \frac{1}{n}\sum_{j=1}^n \frac{\int x f'(\tilde Y_j - \beta^\top_0 x) d\mathbb P^{\tilde {X}}(x)}{\int f(\tilde Y_j - \beta^\top_0 x) d\mathbb P^{\tilde {X}}(x)}  \to_d \mathcal{N}(0, \Gamma_1)   
\end{eqnarray*}
where $\Gamma_1$ is given in (\ref{Gamma1}).

Also, we can write that
\begin{eqnarray}\label{Decomp}
&& \frac{1}{n}\sum_{j=1}^n \frac{\int x f'(\tilde Y_j - \beta^\top_0 x) d\mathbb P^{\tilde {X}}_n(x)}{\int f(\tilde Y_j - \beta^\top_0 x) d\mathbb P^{\tilde {X}}_n(x)}  -  \frac{1}{n}\sum_{j=1}^n \frac{\int x f'(\tilde Y_j - \beta^\top_0 x) d\mathbb P^{\tilde {X}}(x)}{\int f(\tilde Y_j - \beta^\top_0 x) d\mathbb P^{\tilde {X}}(x)} \notag \\
&& =  \int \left(\frac{\int x f'(y - \beta_0^\top x) d\mathbb P^{\tilde X}_n(x)}{\int f(y - \beta_0^\top x) d\mathbb P^{\tilde X}_n(x)}  - \frac{\int x f'(y - \beta_0^\top x) d\mathbb P^{\tilde X}(x)}{\int f(y - \beta_0^\top x) d\mathbb P^{\tilde X}(x)} \right) d\mathbb P^{\tilde Y}_n(y) \notag \notag \\
&&  =  \int  \left( \frac{\int x f'(y - \beta_0^\top x) d\mathbb P^{\tilde X}_n(x)}{\int f(y - \beta_0^\top x) d\mathbb P^{\tilde X}_n(x)}  - \frac{\int x f'(y - \beta_0^\top x) d\mathbb P^{\tilde X}(x)}{\int f(y - \beta_0^\top x) d\mathbb P^{\tilde X}(x)} \right) d\mathbb P^{\tilde Y}(y)  \notag\\
&&  +  \  \frac{1}{\sqrt n} \int \left(\frac{\int x f'(y - \beta_0^\top x) d\mathbb P^{\tilde X}_n(x)}{\int  f(y - \beta_0^\top x) d\mathbb P^{\tilde X}_n(x)}  - \frac{\int x f'(y - \beta_0^\top x) d\mathbb P^{\tilde X}(x)}{\int f(y - \beta_0^\top x) d\mathbb P^{\tilde X}(x)} \right)d\mathbb G^{\tilde Y}_n(y), \notag \\
&&
\end{eqnarray}
with $\mathbb G^{\tilde Y}_n  = \sqrt n (\mathbb P^{\tilde Y}_n  - \mathbb P^{\tilde Y})$. Now, put
\begin{eqnarray*}
\Delta_n(y) : =  \frac{\int x f'(y - \beta_0^\top x) d\mathbb P^{\tilde X}_n(x)}{\int f(y - \beta_0^\top x) d\mathbb P^{\tilde X}_n(x)}  -  \frac{\int x f'(y - \beta_0^\top x) d\mathbb P^{\tilde X}(x)}{\int f(y - \beta_0^\top x) d\mathbb P^{\tilde X}(x)}. 
\end{eqnarray*}
Then,
\begin{eqnarray*} 
\Delta_n(y) & = &  \frac{\int x f'(y - \beta_0^\top x) d(\mathbb P^{\tilde X}_n - \mathbb P^{\tilde X})(x)}{\int f(y - \beta_0^\top x) d\mathbb P^{\tilde X}_n(x)}  \\
&& \ +  \ \frac{\int x f'(y - \beta_0^\top x) d\mathbb P^{\tilde X}(x)}{\int f(y- \beta^\top_0 x) d \mathbb P^{\tilde X}_n(x)} -  \frac{\int x f'(y - \beta_0^\top x) d\mathbb P^{\tilde X}(x)}{\int f(y- \beta^\top_0 x) d \mathbb P^{\tilde X}(x)}  \\
& = &  \frac{\int x f'(y - \beta_0^\top x) d(\mathbb P^{\tilde X}_n - \mathbb P^{\tilde X})(x)}{\int f(y - \beta_0^\top x) d\mathbb P^{\tilde X}_n(x)}  \\
&&  - \frac{\int x f'(y - \beta_0^\top x) d\mathbb P^{\tilde X}(x) \cdot \int f(y- \beta^\top_0 x) d(\mathbb P^{\tilde X}_n - \mathbb P^{\tilde X})(x)}{\int f(y - \beta_0^\top x) d\mathbb P^{\tilde X}_n(x) \cdot\int f(y - \beta_0^\top x) d\mathbb P^{\tilde X}(x)}  \\
& := &   \frac{1}{\sqrt n} \int m_n(x, y)  d \mathbb G^{\tilde X}_n(x)    
\end{eqnarray*}
with $\mathbb G^{\tilde X}_n =  \sqrt n (\mathbb P^{\tilde X}_n - \mathbb P^{\tilde X})$ and 
\begin{eqnarray*}
m_n(x, y) & = &  \frac{x f'(y- \beta^\top_0 x)}{\int f(y- \beta^\top_0 z) d\mathbb P^{\tilde X}_n(z)}  -  \frac{\int z f'(y- \beta^\top_0 z) d\mathbb P^{\tilde X}(z) f(y- \beta^\top_0 x)}{ \int f(y- \beta^\top_0 z) d\mathbb P^{\tilde X}_n(z)   \int f(y- \beta^\top_0 z) d\mathbb P^{\tilde X}(z) } \\
& = &  \frac{1}{\int f(y- \beta^\top_0 z) d\mathbb P^{\tilde X}_n(z)}  \left( x f'(y- \beta^\top_0 x) -  \frac{\int z f'(y- \beta^\top_0 z) d \mathbb P^{\tilde X}(z)}{\int  f(y- \beta^\top_0 z) d \mathbb P^{\tilde X}(z)} f(y- \beta^\top_0 x)  \right). 
\end{eqnarray*}
Thus, using the fact that $d \mathbb P^{\tilde Y}(y) = f^Y(y) dy$ and the decomposition in (\ref{Decomp}), we get that
\begin{eqnarray*}\label{ImpDecom}
&& \sqrt n \left \{\frac{1}{n}\sum_{j=1}^n \frac{\int x f'(\tilde Y_j - \beta^\top_0 x) d\mathbb P^{\tilde {X}}_n(x)}{\int f(\tilde Y_j - \beta^\top_0 x) d\mathbb P^{\tilde {X}}_n(x)}  -  \frac{1}{n}\sum_{j=1}^n \frac{\int x f'(\tilde Y_j - \beta^\top_0 x) d\mathbb P^{\tilde {X}}(x)}{\int f(\tilde Y_j - \beta^\top_0 x) d\mathbb P^{\tilde {X}}(x)} \right \} \notag \\
&& =  \int  \left(\int m_n(x, y) f^Y(y) dy \right)  d \mathbb G^{\tilde X}_n(x)  +  \frac{1}{\sqrt n} \int  \int m_n(x, y) d \mathbb G^{\tilde X}_n(x)  d \mathbb G^{\tilde Y}_n(y).
\end{eqnarray*}
We will show next that  $\int  \left(\int m_n(x, y) f^Y(y) dy \right)  d \mathbb G^{\tilde X}_n(x)$ converges weakly to a Gaussian distribution and that 
$ \int  \int m_n(x, y) d \mathbb G^{\tilde X}_n(x)  d \mathbb G^{\tilde Y}_n(y) = O_{\mathbb P^{\tilde X} \otimes \mathbb P^{\tilde Y}}(1)$.  Note that $m_n(x, y) = \tilde m(x, y) + D_n(x, y)$ where 
\begin{eqnarray*}
\tilde m(x, y)  & =   &      \frac{1}{\int f(y- \beta^\top_0 z) d\mathbb P^{\tilde X}(z)}  \left( x f'(y- \beta^\top_0 x) -  \frac{\int z f'(y- \beta^\top_0 z) d \mathbb P^{\tilde X}(z)}{\int  f(y- \beta^\top_0 z) d \mathbb P^{\tilde X}(z)} f(y- \beta^\top_0 x)  \right)
\end{eqnarray*}
and
\begin{eqnarray*}
  D_n(x, y) &=&  m_n(x, y) -  \tilde m(x, y) \\
 & = &  - \frac{1}{\sqrt n}  \frac{ x f'(y- \beta^\top_0 x) -  \frac{\int z f'(y- \beta^\top_0 z) d \mathbb P^{\tilde X}(z)}{\int  f(y- \beta^\top_0 z) d \mathbb P^{\tilde X}(z)} f(y- \beta^\top_0 x)}{\int f(y- \beta^\top_0 z) d\mathbb P^{\tilde X}_n(z) \int f(y- \beta^\top_0 z) d\mathbb P^{\tilde X}(z)}  \ \int f(y- \beta^\top_0z) d\mathbb G^{\tilde X}_n(z) .
\end{eqnarray*}
Using again the fact that $\int f(y- \beta^\top_0 x) d\mathbb P^{\tilde X}(x) = f^Y(y)$, we compute 
\begin{eqnarray*}
\int \tilde{m}(x, y) f^Y(y) dy  & = &   \int   \left( x f'(y- \beta^\top_0 x) -  \frac{\int z f'(y- \beta^\top_0 z) d \mathbb P^{\tilde X}(z)}{\int  f(y- \beta^\top_0 z) d \mathbb P^{\tilde X}(z)} f(y- \beta^\top_0 x)  \right) dy  \\
& =  &  -   \int \frac{\int z f'(y- \beta^\top_0 z) d \mathbb P^{\tilde X}(z)}{\int  f(y- \beta^\top_0 z) d \mathbb P^{\tilde X}(z)} f(y- \beta^\top_0 x)  dy
\end{eqnarray*}
since $\int  f'(y- \beta^\top_0 x) dy =  \int f'(t) dt = 0$. Hence,
\begin{eqnarray*}
\mathbb P^{\tilde X} \left( \int \tilde{m}(\cdot, y) f^Y(y) dy \right) &  =   &   - \int  \int \frac{\int z f'(y- \beta^\top_0 z) d \mathbb P^{\tilde X}(z)}{\int  f(y- \beta^\top_0 z) d \mathbb P^{\tilde X}(z)} f(y- \beta^\top_0 x)  dy d\mathbb P^{\tilde X}(x)  \\
& = & - \int  \int z f'(y- \beta^\top_0 z) d \mathbb P^{\tilde X}(z) dy \\
& = & 0.
\end{eqnarray*}
Then,
\begin{eqnarray*}
\int \left(\int \tilde{m}(x, y)f^Y(y) dy\right) d\mathbb G^{\tilde X}_n(x)  \to_d \mathcal{N}(0, \Gamma_2)
\end{eqnarray*}
with $\Gamma_2$ the same matrix in (\ref{Gamma2}).   Now, it remains to show that
\begin{itemize}
\item  $\sqrt n \int \left(\int D_n(x, y) f^Y(y) dy\right)  d\mathbb G^{\tilde X}_n(x)  = O_{\mathbb P^{\tilde{X}}}(1)$, 

\item $\int \left(\int m_n(x, y) d\mathbb G^{\tilde X}_n(x) \right) d\mathbb G^{\tilde Y}_n(y) = O_{\mathbb P^{\tilde{X}} \otimes \mathbb P^{\tilde Y}}(1)$.

\end{itemize}
We have that
\begin{eqnarray*}
&&  -\sqrt n \int D_n(x, y) f^Y(y) dy   \\
&& = \int \frac{\int f(y- \beta_0^\top z) d\mathbb G^{\tilde X}_n(z) }{\int f(y- \beta^\top_0 z) d\mathbb P^{\tilde X}_n(z)}  \left(x f'(y- \beta_0^\top x) -  \frac{\int z f'(y- \beta^\top_0 z) f^X(z) dz }{\int  f(y- \beta^\top_0 z) f^X(z) dz}  f(y-\beta_0^\top x)       \right) dy \\
&& : = d_n(x).
\end{eqnarray*}
Note that $d_n$ is almost every where infinitely differentiable in each component $x_i, i =1, \ldots, x_p$. Also, using Leibniz formula, we get for any integer $k \ge 0$ and $i, j \in \{1,  \ldots, p \} $
\begin{eqnarray*}
\frac{\partial^k [x_i f'(y - \beta^\top_0 x)]}{\partial x_j^k}  & = &  \sum_{r=0}^k \binom{k}{r} \frac{\partial^r x_i}{\partial x_j^r} (-1)^{k-r} \beta_{0j}^{k-r} f^{(k-r +1)}(y-\beta^\top_0 x) \\
& = & x_i  (-1)^{k} \beta_{0j}^{k} f^{(k+1)}(y-\beta^\top_0 x)  +  k \mathds{1}_{i=j} (-1)^{k-1} \beta_{0j}^{k-1} f^{(k)}(y-\beta^\top_0 x).
\end{eqnarray*}
Hence,  for any integers $k_1, k_2 \ge 0$
\begin{eqnarray*}
&& \frac{\partial^{k_1 + k_2} [x_i f'(y - \beta^\top_0 x)]}{\partial x_1^{k_1}\partial x_2^{k_2} } \\
&&  =  \frac{\partial^{k_2}}{\partial x_2^{k_2} } \left[x_i  (-1)^{k_1} \beta_{01}^{k_1} f^{(k_1+1)}(y-\beta^\top_0 x)  +  k_1 \mathds{1}_{i=1} (-1)^{k_1-1} \beta_{01}^{k_1-1} f^{(k_1)}(y-\beta^\top_0 x) \right]  \\
&& = x_i  (-1)^{k_1 +k_2} \beta_{01}^{k_1}\beta_{02}^{k_2}f^{(k_1+ k_2 + 1)}(y-\beta^\top_0 x) +  k_2  \mathds{1}_{i=2} (-1)^{k_1 + k_2 -1}  \beta_{01}^{k_1}  \beta_{02}^{k_2-1}  f^{(k_1+ k_2)}(y-\beta^\top_0 x)  \\ 
&& \  + \  k_1 \mathds{1}_{i=1} (-1)^{k_1 + k_2-1} \beta_{01}^{k_1-1} \beta_{02}^{k_2} f^{(k_1 + k_2)}(y-\beta^\top_0 x) \\
&&  = x_i  (-1)^{k_1 +k_2} \beta_{01}^{k_1}\beta_{02}^{k_2}f^{(k_1+ k_2 + 1)}(y-\beta^\top_0 x)  \\
&&   \ +  \   ( k_1 \mathds{1}_{i=1}  \beta_{02} +  k_2 \mathds{1}_{i=2}\beta_{01})  (-1)^{k_1 + k_2 -1} \beta_{01}^{k_1-1}  \beta_{02}^{k_2-1} f^{(k_1+ k_2)}(y-\beta^\top_0 x)
\end{eqnarray*}
which can be easily generalized for any vector $k= (k_1, \ldots, k_p) \in \mathbb N^p_0$:
\begin{eqnarray*}
&& D^k( x_i f'(y - \beta^\top_0 x)): =  \frac{\partial^{k_1 + \ldots + k_p} [x_i f'(y - \beta^\top_0 x)]}{\partial x_1^{k_1} \ldots  \partial x_p^{k_p}}  \\
&& = x_i  (-1)^{k_.} (\prod_{j=1}^p \beta_{0j}^{k_j}) f^{(k_. + 1)}(y-\beta^\top_0 x) \\
&& \ + \  \Big( \sum_{j=1}^p k_j \mathds{1}_{i=j} \prod_{\substack{l=1 \\ l \ne j}}^p\beta_{0l}\Big)  (-1)^{k_. -1}  \left(\prod_{j=1}^p \beta_{0j}^{k_j -1}  \right)  f^{(k_.)}(y-\beta^\top_0 x) 
\end{eqnarray*}
with $k_. = \sum_{j=1}^p k_j.$  Also, it is easy to show that 
\begin{eqnarray*}
D^k(f(y - \beta^\top_0 x)) = (-1)^{k_.} \left(\prod_{j=1}^p \beta_{0j}^{k_j} \right) f^{(k_.)}(y-\beta_0^\top x). 
\end{eqnarray*}
Using the same notation as in \cite[Section 2.7]{aadbookE2} for the H\"older classes, define for a function $g: \mathcal X \mapsto \mathbb R$ and $\gamma > 0$
\begin{eqnarray}\label{normgamma}
\Vert g \Vert_\gamma  =  \max_{0 \le k_. \le \underline{\gamma}} \sup_{x} \vert D^k g(x) \vert +  \max_{k_. = \underline{\gamma}}  \sup_{x, y}   \frac{\vert D ^k g(x)  -  D^k g(y)  \vert }{\Vert x- y \Vert^{\gamma - \underline{\gamma}}}  
\end{eqnarray}
where $\underline{\gamma}$ is the greatest integer strictly smaller than $\gamma$.  In the following, we will take $\gamma =p+1$ and hence $\underline{\gamma} = p$. 

Recall that $\mathcal X \subseteq \overline{\mathcal B}(0, B)$ and $\beta_0 \in \overline{\mathcal B}(0, R^\ast)$.   Then, for $i \in \{1, \ldots, p \}$ and $k = (k_1, \ldots, k_p) \in \mathbb N^p_0$
\begin{eqnarray}\label{Dk}
 \vert D^k (x_i f'(y-\beta_0^\top x)) \vert & \le  &  B (R^\ast)^{k_. } \ \vert f^{(k_.+1)}(y-\beta_0^\top x) \vert \notag \\
&& + \  p \ (\max_{1 \le j \le p} k_j)  (R^\ast)^{p-1 + \sum_{j=1}^p (k_j - 1) }  \   \vert f^{(k_.)}(y-\beta_0^\top x) \vert \notag \\
&& =  B (R^\ast)^{k_. } \ \vert f^{(k_.+1)}(y-\beta_0^\top x) \vert \notag \\
&& \ + \  p \  (\max_{1 \le j \le p} k_j)  (R^\ast)^{k_. -1}  \   \vert f^{(k_.)}(y-\beta_0^\top x) \vert \notag \\
&&  \le   B (R^\ast \vee 1)^p \ \vert f^{(k_.+1)}(y-\beta_0^\top x) \vert \notag \\
&& \ + \   p^2  (R^\ast \vee 1)^{p-1}  \vert f^{(k_.)}(y-\beta_0^\top x) \vert
\end{eqnarray}
using the fact that $k_. \le p$. Also,
\begin{eqnarray}\label{Dk2}
 \vert D^k (f(y-\beta_0^\top x) )\vert \le       (R^\ast)^{k_.} \vert f^{(k_.)}(y-\beta_0^\top x) \vert  \le  (R^\ast \vee 1)^{p} \ \vert f^{(k_.)}(y-\beta_0^\top x) \vert.  
\end{eqnarray}
Assumption (A4) implies that for any integer $m \ge 0$, there exists a real constant $A_{m, \alpha} > 0$ such that for all $t \in \mathbb R$
\begin{eqnarray}\label{boundderiv}
 \vert f^{(m)}(t) \vert & \le  & A_{m, \alpha} \left(\sum_{j=0}^{m (\alpha-1)} \vert t \vert^{j}  \right) \exp(- d^{-\alpha} \vert t \vert^\alpha). 
\end{eqnarray}
To show the inequality in (\ref{boundderiv}), we will first show that there exists $P_{m, \alpha}$, a polynomial of degree $m (\alpha -1)$ such that 
\begin{eqnarray}\label{FormDerivm}
f^{(m)}(t)  = P_{m, \alpha}(t) \exp(-d^{-\alpha} t^\alpha)    
\end{eqnarray}
for $t > 0$.  We will show this by induction. If $\alpha =1$,   then  it is clear that $P_{m, 1}(t) = (-1)^m c_\alpha \alpha d^{-m}$. Now, we suppose that $\alpha > 1$.   For $m=0$, (\ref{FormDerivm}) is  satisfied  with $P_{m, \alpha} \equiv c_\alpha$. Suppose that it is satisfied for $m \ge 1$.  Then, for $t > 0$
\begin{eqnarray*}
f^{(m+1)}(t)  & = & \left(P'_{m, \alpha}(t)  - d^{-\alpha} \alpha t^{\alpha -1} P_{m,\alpha}(t) \right) \exp(-d^{-\alpha} t^\alpha)  \\
& = &  P_{m+1,\alpha}(t)  \exp(-d^{-\alpha} t^\alpha),
\end{eqnarray*}
where $P_{m+1,\alpha}$ is the sum of  polynomials of degree $m (\alpha-1) - 1$ and $m (\alpha-1)  + \alpha -1 = (m+1)(\alpha -1)$ respectively. This implies that 
$P_{m+1,\alpha}$ is a polynomial of degree $(m+1)(\alpha -1)$, and hence (\ref{FormDerivm}) is satisfied for $m+1$. Thus, for $j = 0, \ldots, m (\alpha -1)$, there exist real numbers $a_{j, m,\alpha}$ such that 
\begin{eqnarray*}
P_{m,\alpha}(t)  =  \sum_{j=0}^{m (\alpha -1)}  a_{j, m,\alpha}  t^j.
\end{eqnarray*}
Taking
$$
A_{m, \alpha}  = \max_{ 0 \le j \le m (\alpha -1)}  \vert a_{j, m,\alpha} \vert
$$
and using symmetry implies that the claimed identity in (\ref{boundderiv}) holds.

Now, for $i \in \{1, \ldots, p \}$, let $g_i(x)$ denote the $i$-th component of $d_n$.  Then, using the expression in (\ref{Dk}) and (\ref{boundderiv}) it holds for $k = (k_1, \ldots, k_p) \in \mathbb N^p_0: 0 \le k_. \le \underline \gamma = p$ that
\begin{eqnarray*}
&&  \sup_{x \in \overline{\mathcal B}(0, B)}  \vert D^k g_i(x)  \vert \\
&& \le   \int   \frac{ \vert \int f(y- \beta_0^\top z) d\mathbb G^{\tilde X}_n(z)  \vert }{\int f(y- \beta_0^\top z) d\mathbb P^{\tilde X}_n(z)} \times \\
 && \  \  \ \  \  \  \   \Bigg [B (R^\ast \vee 1)^{p}  A_{k_. +1, \alpha}  \sup_{x  \in \overline{\mathcal B}(0, B)} \left \{\sum_{j=0}^{(k_. +1)(\alpha -1)} \vert y - \beta^\top_0 x \vert^{j} \exp(-d^{-\alpha} \vert y - \beta^\top_0 x \vert^\alpha)  \right \}  \\
 && \   \  \  \  \ \  + \  p^2  (R^\ast \vee 1)^{p-1}  A_{k_., \alpha} \sup_{x  \in \overline{\mathcal B}(0, B)} \left \{\sum_{j=0}^{k_. (\alpha-1)} \vert y - \beta^\top_0 x \vert^{j} \exp(-d^{-\alpha} \vert y - \beta^\top_0 x \vert^\alpha)  \right \}    \\
 &&  \  \  \   \ \ \ \ \   +    \left \vert \frac{\int z_j f'(y- \beta^\top_0 z) f^X(z) dz }{\int  f(y- \beta^\top_0 z) f^X(z) dz}  \right \vert \\
 && \ \ \ \ \ \ \ \ \ \times \ \  (R^\ast \vee 1)^{p}  A_{k_., \alpha} \sup_{x  \in \overline{\mathcal B}(0, B)} \left \{\sum_{j=0}^{k_. (\alpha-1)} \vert y - \beta^\top_0 x \vert^{j} \exp(-d^{-\alpha} \vert y - \beta^\top_0 x \vert^\alpha)  \right \} \Bigg ] dy.
\end{eqnarray*}
Using the fact that $ 1 \le m \le p+1$ we can write that
\begin{eqnarray*}
\sum_{j=0}^{m (\alpha -1)} \vert y - \beta^\top_0 x \vert^{j} & \le   &  \sum_{j=0}^{(p+1)(\alpha -1)} (2^{j  - 1} \vee 1) (\vert y \vert^{j} +  C^{j} ) \\
& \le & (2^{(p+1)(\alpha -1)  - 1} \vee 1) \sum_{j=0}^{(p+1)(\alpha -1)} \vert y \vert^{j}  + D_\alpha
\end{eqnarray*}
with $C =  R^\ast B$ and $D_\alpha =  (2^{(p+1)(\alpha -1)  - 1} \vee 1) \sum_{j=0}^{(p+1)(\alpha-1)} C^{j}$. As done above in the proof of consistency we can show that 
\begin{eqnarray}\label{ImpBlock}
\frac{\sup_{x \in \overline{\mathcal B}(0, B)}f(y-\beta_0^\top x) }{\int f(y-\beta^\top_0 z) d\mathbb P^{\tilde X}_n(z)}  \le  G_2(y) =  M' \mathds{1}_{\vert y \vert \le C} +  \exp(\alpha d^{-\alpha} \tilde{C} \vert y \vert^{\alpha -1} ) \mathds{1}_{\vert y \vert > C};
\end{eqnarray}
see also the expression $G_2$ in (\ref{G2}). 
Also, we have that 
\begin{eqnarray*}
 \int z  f'(y- \beta^\top_0 z)  f^X(z)dz  = \int z  \frac{f'(y- \beta^\top_0 z)}{\sqrt{f(y- \beta^\top_0 z)}}  \sqrt{f(y- \beta^\top_0 z)} f^X(z)dz.    
\end{eqnarray*}
By the Cauchy-Schwarz inequality, it follows that 
\begin{eqnarray*}
 \frac{\int \Vert z \Vert\vert f'(y- \beta^\top_0 z) \vert f^X(z)dz}{\int f(y- \beta^\top_0 z) f^X(z) dz} & \le   &  \int \Vert z \Vert^2 \frac{(f'(y- \beta^\top_0 z))^2}{f(y- \beta^\top_0 z)} f^X(z) dz,
\end{eqnarray*}
where 
\begin{eqnarray*}
\frac{(f'(y- \beta^\top_0 z))^2}{f(y- \beta^\top_0 z)} &  =  &   \frac{\alpha^2 c^2_\alpha d^{-2\alpha}\vert y - \beta_0^\top z\vert^{2(\alpha -1)} \exp(-2 d^{-\alpha} \vert y- \beta_0^\top z \vert^\alpha)}{c_\alpha  \exp(- d^{-\alpha} \vert y- \beta_0^\top z \vert^\alpha)}   \\
& = &  \alpha^2 c_\alpha d^{-2\alpha} \vert y - \beta_0^\top z\vert^{2(\alpha -1)} \exp(- d^{-\alpha} \vert y- \beta_0^\top z \vert^\alpha) \\
& \le  &   \alpha^2 c_\alpha d^{-2\alpha} ( 2^{2(\alpha -1) -1}  \vee 1) (\vert y \vert^{2(\alpha -1)}  + C^{2(\alpha -1)}) \exp(- d^{-\alpha} 2^{1-\alpha} \vert y \vert^\alpha +  d^{-\alpha} C^\alpha)
\end{eqnarray*}
Thus, 
\begin{eqnarray}\label{ImpBlock2}
&&\frac{\int \vert z_j \vert  \ \vert f'(y- \beta^\top_0 z) \vert f^X(z)dz}{\int f(y- \beta^\top_0 z) f^X(z) dz} \notag \\
&& \leq \frac{\int \Vert z \Vert \  \vert f'(y- \beta^\top_0 z) \vert f^X(z)dz}{\int f(y- \beta^\top_0 z) f^X(z) dz} \notag \\
 && \le  
 \mathbb E[\Vert X \Vert^2] \  \alpha^2 c_\alpha d^{-2\alpha}  ( 2^{2(\alpha -1) -1}  \vee 1) (\vert y \vert^{2(\alpha -1)}  + C^{2(\alpha -1)}) \exp(- d^{-\alpha} 2^{1-\alpha} \vert y \vert^\alpha +  d^{-\alpha} C^\alpha)  \notag \\
 &&  \lesssim (\vert y \vert^{2(\alpha -1)}  + C^{2(\alpha -1)}) \exp(- d^{-\alpha} 2^{1-\alpha} \vert y \vert^\alpha).
\end{eqnarray}
Note that this upper bound is sharper than the one obtained in (\ref{G1}).   Then, there exists $\tilde D_\alpha > 0$ such that 
\begin{eqnarray*}
&&  \sup_{x \in \overline{\mathcal B}(0, B)}  \vert D^k g_i(x)  \vert \\
&& \lesssim  \int \left \vert \int f(y- \beta_0^\top z) d\mathbb G^{\tilde X}_n(z) \right \vert \Big(\sum_{j=0}^{(p+1)(\alpha-1)} \vert y \vert^{j}  + \tilde{D}_\alpha \Big) \Big(1+ 
(\vert y \vert^{2(\alpha -1)}  + C^{2(\alpha -1)}) \exp(- d^{-\alpha} 2^{1-\alpha} \vert y \vert^\alpha) \Big) \\
&&  \ \  \ \  \  \  \ \times \  \left(M' \mathds{1}_{\vert y \vert \le C} +  \exp(\alpha d^{-\alpha} \tilde{C} \vert y \vert^{\alpha -1} ) \mathds{1}_{\vert y \vert > C}\right) dy \\
&& \lesssim  \int \left \vert \int f(y- \beta_0^\top z) d\mathbb G^{\tilde X}_n(z) \right \vert \Big(\sum_{j=0}^{(p+1)(\alpha-1)} \vert y \vert^{j}  +\sum_{j=0}^{(p+1)(\alpha-1)} \vert y \vert^{j+2(\alpha-1)}  +\tilde{D}'_\alpha \Big) \\
&&  \ \  \ \  \  \  \ \times \  \left(M' \mathds{1}_{\vert y \vert \le C} +  \exp(\alpha d^{-\alpha} \tilde{C} \vert y \vert^{\alpha -1} ) \mathds{1}_{\vert y \vert > C}\right) dy \\
&&  \lesssim  \int \left \vert \int f(y- \beta_0^\top z) d\mathbb G^{\tilde X}_n(z) \right \vert \Big(\sum_{j=0}^{(p+3)(\alpha-1)} \vert y \vert^{j}  + \tilde{D}''_\alpha \Big)  \times \left(M' \mathds{1}_{\vert y \vert \le C} +  \exp(\alpha d^{-\alpha} \tilde{C} \vert y \vert^{\alpha -1} ) \mathds{1}_{\vert y \vert > C}\right) dy
\end{eqnarray*}
for some constant $\tilde D'_\alpha$. Above, note that we have used the fact that $ \exp(- d^{-\alpha} 2^{1-\alpha} \vert y \vert^\alpha) \le 1$.
$$
$$
The goal now is to show that for $1 \le i  \le p$ 
$\sup_{x \in \overline{\mathcal B}(0, B)}  \vert D^k g_i(x)  \vert = O_{\mathbb P^{\tilde X}}(1)$ for all $k = (k_1, \ldots, k_p) \in \mathbb N^p_0 $ such that $0 \le k_. \le p$.  For a fixed $y \in \mathbb R$, denote by $\sigma^2(y) = \text{var}(f(y- \beta_0^\top \tilde{X}))$. More specifically, we have that
\begin{eqnarray*}
  \sigma^2(y)  & =   &  \int f^2(y - \beta_0^\top x) f^X(x) dx - 
 \left(\int f(y-\beta_0^\top x) f^X(x) dx\right)^2   \\
 & =  &  \int f^2(y - \beta_0^\top x) f^X(x) dx   - (f^Y(y))^2.
\end{eqnarray*}
Note that above we used the fact that $\tilde X$ and $X$ have the same distribution. Let $\gamma_\alpha = \alpha d^{-\alpha} \tilde C$. For a fixed $j  \in \{0, \ldots, (p+3)(\alpha-1) \} $ we have that 
\begin{eqnarray*}
&& \int \left \vert \int f(y- \beta^\top_0 x) d\mathbb G^{\tilde X}_n(x) \right \vert  \vert y \vert^{j} \exp(\gamma_\alpha  \vert y \vert^{\alpha -1} )  \mathds{1}_{\vert y \vert > C} dy  \\
&& =  \int \left \vert \int \frac{f(y- \beta^\top_0 x)}{\sigma(y)} d\mathbb G^{\tilde X}_n(x) \right \vert  \sigma(y) \ \vert y \vert^{j} \exp(\gamma_\alpha  \vert y \vert^{\alpha -1} ) \mathds{1}_{\vert y \vert > C}  dy \\
&& \le \int \left \vert \int \frac{f(y- \beta^\top_0 x)}{\sigma(y)} d\mathbb G^{\tilde X}_n(x) \right \vert  \left(\int f^2(y - \beta_0^\top x) f^X(x) dx \right)^{1/2} \vert y \vert^{j} \exp(\gamma_\alpha  \vert y \vert^{\alpha -1} ) dy
\end{eqnarray*}
Since $ \vert y - \beta_0^\top x \vert^\alpha \ge 2^{1-\alpha} \vert y \vert^\alpha -  (\Vert \beta_0 \Vert B)^\alpha$ it follows that
\begin{eqnarray*}
\left(\int f^2(y - \beta_0^\top x) f^X(x) dx\right)^{1/2}  & \le  &  c_\alpha \exp\big(- 2^{1-\alpha} d^{-\alpha} \vert y\vert^\alpha + d^{-\alpha} (\Vert \beta_0 \Vert B)^\alpha\big) \\
& \le &  c_\alpha \exp\big(- 2^{1-\alpha} d^{-\alpha} \vert y\vert^\alpha + d^{-\alpha} C^{\alpha} \big).
\end{eqnarray*}
Furthermore, for $K > 0$ the Chebyshev's inequality implies that
\begin{eqnarray*}
\mathbb P \left(\left \vert \int \frac{f(y- \beta^\top_0 x)}{\sigma(y)} d\mathbb G^{\tilde X}_n(x) \right \vert >  K \right  )  \le   \frac{1}{K^2}.   
\end{eqnarray*}
Thus, with probability $\ge 1- K^{-2}$ we have that 
\begin{eqnarray*}
&& \int \left \vert \int f(y- \beta_0^\top z) d\mathbb G^{\tilde X}_n(z) \right \vert \Big( \sum_{j=0}^{(p+3)(\alpha-1)} \vert y \vert^{j}  + \tilde D''_\alpha \Big)  \exp(\gamma_\alpha \vert y \vert^{\alpha -1} ) \mathds{1}_{\vert y \vert > C} dy \\   
&& \lesssim \int \left(\sum_{j=0}^{(p+3)(\alpha-1)} \vert y \vert^{j} + \tilde D''_\alpha\right)  \exp\big(- 2^{1-\alpha} d^{-\alpha} \vert y\vert^\alpha + \gamma_\alpha  \vert y \vert^{\alpha -1})\mathds{1}_{\vert y \vert > C} dy  \\
&& :=  M_{1}(K, \alpha)  < \infty.
\end{eqnarray*}
Also, 
\begin{eqnarray*}
&& \int \left \vert \int f(y- \beta_0^\top z) d\mathbb G^{\tilde X}_n(z) \right \vert \Big( \sum_{j=0}^{(p+3)(\alpha -1)} \vert y \vert^{j}  + \tilde D''_\alpha \Big) \mathds{1}_{\vert y \vert \le C} dy  \\
&& \le K c_\alpha  \exp( d^{-\alpha} C^\alpha) \int \exp(-2^{1-\alpha} d^{-\alpha} \vert y \vert^\alpha) \left( \sum_{j=0}^{(p+3)(\alpha -1)} \vert y \vert^{j}  + \tilde D''_\alpha  \right)  \mathds{1}_{\vert y \vert \le C} dy \\
&& \le K c_\alpha  \exp( d^{-\alpha} C^\alpha) \int \left( \sum_{j=0}^{(p+3)(\alpha -1)} \vert y \vert^{j}  + \tilde D''_\alpha  \right)  \mathds{1}_{\vert y \vert \le C} dy \\
&& := M_2(K, \alpha) < \infty. \\
\end{eqnarray*}
Hence, there exists  $M(K, \alpha) > 0$ depending on  $M_1(K, \alpha)$ and  $M_2(K, \alpha)$ such that for all $1 \le i  \le p$
\begin{eqnarray*}
 \sup_{x \in \overline{\mathcal B}(0, B)}  \vert D^k g_i(x)  \vert  \lesssim M(K, \alpha)
\end{eqnarray*}
with probability $\ge 1 - 2/K^2$ for $K > 1/\sqrt 2$. As for the second term in the definition (\ref{normgamma}), recall that $\gamma = p+1$ and $ \underline \gamma =p$, and hence 
\begin{eqnarray*}
\max_{k_. =  p} \sup_{x, y \in \overline{\mathcal B}(0, B)}   \frac{D^k g_i(x)  - D^k g_i(y)}{\Vert x - y \Vert}    \le \max_{s_. =  p+1} \sup_{z \in \overline{\mathcal B}(0, B)}   \vert D^{s} g_i(z)  \vert 
\end{eqnarray*}
here $s = (s_1, \ldots, s_p) \in \mathbb N^p_0$. Using the same calculations as before, we conclude that for $i \in \{1, \ldots, p \}$
\begin{eqnarray}\label{Normgi}
\Vert g_i \Vert_{\gamma}  =  O_{\mathbb P^{\tilde X}}(1)
\end{eqnarray}
for $\gamma = p+1$. \\

\medskip
Now, we handle the case $k_. = 0$, or equivalently $k_1 = \ldots = k_p = 0$.  Using the same arguments as above, we can show again that 
\begin{eqnarray}\label{supnorm}
\Vert d_n \Vert_\infty = O_{\mathbb P^{\tilde X}}(1).
\end{eqnarray}
Recall that our first goal is to show that  
$$\sqrt n \int \left(\int D_n(x, y)  f^Y(y) dy\right)  d\mathbb G_n^{\tilde X}(x) = - \int d_n(x) d\mathbb G_n^{\tilde X}(x)  = O_{\mathbb P^{\tilde X}}(1).$$ 
From  (\ref{Normgi}) and  (\ref{supnorm}),  we conclude that with probability tending to $1$, $d_n$ belongs to the H\"older space $C^{p+1}(\overline{\mathcal{B}}(0, B))_D$ for some constant $D > 0$ which depends on $\alpha, B$, $R^\ast$ and the dimension $p$. Since all the elements of  $C^{p+1}(\overline{\mathcal B}(0, B))_D$ have a supremum norm bounded above by $D$, this class admits $D$ also as an envelope.  It follows from \cite[Corollary 2.7.2]{aadbookE2} that for all $\eta > 0$
\begin{eqnarray}\label{unientropholder}
 \log N\left(\eta,  C^{p+1}(\overline{\mathcal{B}}(0, B))_D, L_2(Q)   \right) \le L \left(\frac{1}{\eta}   \right)^{p/(p+1)}
\end{eqnarray}
for all probability measures $Q$ on $\mathbb R^p$ and $L > 0$ some constant depending (through $D$) on $\alpha, B, R^\ast$ and $p$. Using (\ref{unientropholder}), the value of the uniform entropy of $C^{p+1}(\overline{\mathcal B}(0, B))_D$ at $1$ satisfies
\begin{eqnarray*}
J(1,  C^{p+1}(\overline{\mathcal B}(0, B))_D)  & \le  &  \int_0^1\left(1  +  L \left(\frac{1}{\eta}\right)^{p/(p+1)}   \right)^{1/2} d\eta  \\
& \le   &  1 +  \sqrt L \int_0^1 \frac{1}{\eta^{p/(2(p+1))}} d\eta  =   1 + \frac{2 (p+1) \sqrt L}{p +2}. 
\end{eqnarray*}
By \cite[2.14.1]{aadbookE2}, we conclude that
$$
\mathbb E[\Vert \mathbb G^{\tilde X}_n \Vert_{C^{p+1}(\overline{\mathcal B}(0, B))_D)}]  \lesssim  J(1, C^{p+1}(\overline{\mathcal B}(0, B))_D))  = O(1).
$$
Now, for $M > 0$
\begin{eqnarray}\label{FinalRes}
\mathbb P\left( \left \vert  \int d_n(x)  d\mathbb G_n^{\tilde X}(x) \right \vert > M \right) & = & \mathbb P\left( \left \vert  \int d_n(x)  d\mathbb G_n^{\tilde X}(x) \right \vert > M, d_n \notin C^{p+1}(\overline{\mathcal B}(0, B))_D) \right)  \notag \\
&& \ + \  \mathbb P\left( \left \vert  \int d_n(x)  d\mathbb G_n^{\tilde X}(x) \right \vert > M, d_n \in C^{p+1}(\overline{\mathcal B}(0, B))_D) \right)   \notag \\
& \leq  & o(1)  +  \frac{1}{M}  \mathbb E[\Vert \mathbb G^{\tilde X}_n \Vert_{C^{p+1}(\overline{\mathcal B}(0, B))_D)}],  \
 \textrm{by the Markov's inequality},  \notag\\
& \to &  0, \ \ \textrm{as $M \to \infty$}. 
\end{eqnarray}
This shows that $\int d_n(x) d\mathbb G^{\tilde X}_n(x) = O_{\mathbb P^{\tilde X}}(1)$. 

\bigskip
\bigskip

Next, we will show that 
$\int \left(m_n(x, y) d\mathbb G_n^{\tilde Y}(y)\right) d\mathbb G^{\tilde X}_n(x) = O_{\mathbb  P^{\tilde X} \times \mathbb  P^{\tilde Y}}(1)$.  We have that 
\begin{eqnarray*}
&& s_n(x):= \int m_n(x, y) d\mathbb G_n^{\tilde Y}(y)   \\
&& = \int  \frac{1}{\int f(y-\beta_0^\top z) d\mathbb P^{\tilde X}_n(z)} \left( x f'(y- \beta_0^\top x) - \frac{\int  z   f'(y- \beta^\top_0 z) f^X(z)dz}{\int f(y- \beta^\top_0 z) f^X(z) dz}  f(y-\beta_0^\top x) \right) d \mathbb G^{\tilde{Y}}_n(y),
\end{eqnarray*}
which is a function in $x \in \mathcal X$ but also random as it involves the responses in the unmatched sample.  Let $s_{j,n}$ be the $j$-th component of $s_n$. The main idea is to show that, with probability tending to $1$, $x \mapsto s_{j,n}(x)$ belongs to a \lq\lq nice\rq\rq \ class of functions $\mathcal S$ so that  $\mathbb E[\Vert \mathbb G^{\tilde X}_n  \Vert_{\mathcal S} ] \lesssim 1$. As done above, we will next show that 
\begin{eqnarray*}
s_{n, j}  \in C^{p+1}(\overline{\mathcal B}(0, B))_D \equiv \mathcal S
\end{eqnarray*}
with large probability, and where $D > 0$ is a constant not necessarily equal to the one exhibited above.   Let $k = (k_1, \ldots, k_p) \in \mathbb N^p_0$. We start with the case where $k_i = 0$ for $i = 1, \ldots, p$.  The vector $x \in \mathbb R^p$ can be seen a parameter indexing the function
\begin{eqnarray*}
y &\mapsto & m_n(x, y)\\
& = &  \frac{1}{\int f(y-\beta_0^\top z) d\mathbb P^{\tilde X}_n(z)} \left( x f'(y- \beta_0^\top x) - \frac{\int  z   f'(y- \beta^\top_0 z) f^X(z)dz}{\int f(y- \beta^\top_0 z) f^X(z) dz}  f(y-\beta_0^\top x) \right)\\
&:=&  q_x(y)
\end{eqnarray*}
For $j \in \{1, \ldots, p\}$, let us denote by $q_{x, j}$ the $j$-th component of $q_x$; i.e., 
\begin{eqnarray*}\label{qxj}
q_{x, j}(y)  =  \frac{1}{\int f(y-\beta_0^\top z) d\mathbb P^{\tilde X}_n(z)} \left( x_j f'(y- \beta_0^\top x) - \frac{\int  z_j   f'(y- \beta^\top_0 z) f^X(z)dz}{\int f(y- \beta^\top_0 z) f^X(z) dz}  f(y-\beta_0^\top x) \right), \ y \in \mathbb R.
\end{eqnarray*}
Computing the gradient of this function with respect of $x$ yields 
\begin{eqnarray*}
\nabla q_{x, j}(y)  & =   &   \frac{1}{\int f(y-\beta_0^\top z) d\mathbb P^{\tilde X}_n(z)} \left(\mathbb{I}_j  f'(y- \beta_0^\top x)   -  x_j \beta_0 f''(y- \beta_0^\top x) \right)  \\
&& \  +  \ \beta_0 \frac{f'(y- \beta_0^\top x)}{\int f(y-\beta_0^\top z) d\mathbb P^{\tilde X}_n(z)}  \frac{\int  z_j   f'(y- \beta^\top_0 z) f^X(z)dz}{\int f(y- \beta^\top_0 z) f^X(z) dz}     \\
& = &  a_n(y)  + b_n(y)
\end{eqnarray*}
where $\mathbb I_j = (0, 0, \ldots, 1, 0, \ldots, 0)$ with $1$ at the $j$-th position.  Note that
\begin{eqnarray*}
 \frac{\vert f'(y-\beta^\top_0 x)\vert}{\int f(y-\beta_0^\top z) d\mathbb P^{\tilde X}_n(z)} \le \frac{\alpha d^{-\alpha} \vert y - \beta_0^\top x \vert^{\alpha -1} f(y-\beta^\top_0 x)}{\int f(y-\beta_0^\top z) d\mathbb P^{\tilde X}_n(z)}  
\end{eqnarray*}
and 
\begin{eqnarray*}
 \frac{\vert f''(y-\beta^\top_0 x) \vert}{\int f(y-\beta_0^\top z) d\mathbb P^{\tilde X}_n(z)} \le \frac{\alpha d^{-\alpha} \Big((\alpha-1) \mathds{1}_{\alpha \ge 2} \ \vert y - \beta_0^\top x \vert^{\alpha -2} +  d^{-\alpha} \alpha \ \vert y - \beta_0^\top x \vert^{2(\alpha -1)} \Big) f(y-\beta^\top_0 x)}{\int f(y-\beta_0^\top z) d\mathbb P^{\tilde X}_n(z)}  
\end{eqnarray*}
Using the inequality in (\ref{ImpBlock}), and 
$$\vert y - \beta_0^\top x \vert^{\alpha -1}  \le (2^{\alpha -2} \vee 1) (\vert y \vert^{\alpha-1}  +  C^{\alpha-1})$$
for all $\alpha \ge 1$, 
$$\vert y - \beta_0^\top x \vert^{\alpha -2}  \le (2^{\alpha -3} \vee 1) (\vert y \vert^{\alpha-2}  +  C^{\alpha-2})$$
for all $\alpha \ge 2$, and 
$$\vert y - \beta_0^\top x \vert^{2(\alpha -1)}  \le (2^{2(\alpha -1) -1} \vee 1) (\vert y \vert^{2(\alpha-1)}  +  C^{2(\alpha-1)})$$
for all $\alpha \ge 1$, we can write that
\begin{eqnarray*}
 && \frac{\Vert \mathbb{I}_j f'(y-\beta^\top_0 x)\Vert}{\int f(y-\beta_0^\top z) d\mathbb P^{\tilde X}_n(z)} \\
 &&\le \alpha d^{-\alpha} (2^{\alpha -2} \vee 1) (\vert y \vert^{\alpha-1}  +  C^{\alpha-1}) \left( M'\mathds{1}_{\vert y \vert \le C} + \exp(\alpha d^{-\alpha} \tilde C \vert y \vert^{\alpha -1}) \mathds{1}_{\vert y \vert > C} \right),
 \end{eqnarray*}
 and 
\begin{eqnarray*}
 && \frac{\Vert x_j \beta_0 f''(y-\beta^\top_0 x) \Vert}{\int f(y-\beta_0^\top z) d\mathbb P^{\tilde X}_n(z)} \\
 && \le C  \alpha d^{-\alpha} \left((\alpha -1) \mathds{1}_{\alpha \ge 2} (2^{\alpha -3} \vee 1) (\vert y \vert^{\alpha-2}  +  C^{\alpha-2}) + \alpha d^{-\alpha} (2^{2\alpha -3} \vee 1) (\vert y \vert^{2(\alpha-1)}  +  C^{2(\alpha-1)}) \right) \\
 && \ \times \left( M'\mathds{1}_{\vert y \vert \le C} + \exp(\alpha d^{-\alpha} \tilde C \vert y \vert^{\alpha -1}) \mathds{1}_{\vert y \vert > C} \right)
 \end{eqnarray*}
and hence
\begin{eqnarray*}
\Vert a_n(y) \Vert \le A_\alpha \left(\vert y \vert^{2(\alpha -1)} + \vert y \vert^{\alpha -1}  + \vert y\vert^{\alpha -2} \mathds{1}_{\alpha \ge 2}  +  B_\alpha \right) \times \left( M'  \mathds{1}_{\vert y \vert \le C}  + \exp(\alpha d^{-\alpha} \tilde C \vert y \vert^{\alpha -1}) \mathds{1}_{\vert y \vert > C}\right) 
\end{eqnarray*}
for some constants $A_\alpha > 0, B_\alpha > 0$.  For the term $b_n(x)$, note that we can show as in the proof of consistency that
\begin{eqnarray*}
\frac{\vert f'(y- \beta_0^\top x) \vert}{\int f(y-\beta_0^\top z) d\mathbb P^{\tilde X}_n(z)}  &\le &  G_1(y) \\
&= & M  \mathds{1}_{\vert y \vert \le C}  + (2^{\alpha-2} \vee 1)\alpha d^{-\alpha}(\lvert y\rvert ^{\alpha-1} + C^{\alpha-1})\exp(\alpha d^{-\alpha} \tilde C \vert y \vert^{\alpha -1}) \mathds{1}_{\vert y \vert > C};  
\end{eqnarray*}
see also the expression of $G_1$ in (\ref{G1}).  Also, using the inequality proved in (\ref{ImpBlock2}),  we have that  
\begin{eqnarray*}
\Vert b_n(y) \Vert & \le  &  R^\ast \ \mathbb E[\Vert X \Vert^2] \  \alpha^2 c_\alpha d^{-2\alpha}   (2^{2(\alpha -1)-1} \vee 1) (\vert y \vert^{2(\alpha -1)}  + C^{2(\alpha -1)}) \exp(- d^{-\alpha} 2^{1-\alpha} \vert y \vert^\alpha + d^{-\alpha} C^\alpha)  \\
&&  \times \  \left(M  \mathds{1}_{\vert y \vert \le C}  + (2^{\alpha-2} \vee 1)\alpha d^{-\alpha}(\lvert y\rvert ^{\alpha-1} + C^{\alpha-1})\exp(\alpha d^{-\alpha} \tilde C \vert y \vert^{\alpha -1}) \mathds{1}_{\vert y \vert > C} \right) \\
& \lesssim &   (\vert y \vert^{2(\alpha -1)}  + C^{2(\alpha -1)}) \left(M  \mathds{1}_{\vert y \vert \le C}  + (2^{\alpha-2} \vee 1)\alpha d^{-\alpha}(\lvert y\rvert ^{\alpha-1} + C^{\alpha-1})\exp(\alpha d^{-\alpha} \tilde C \vert y \vert^{\alpha -1}) \mathds{1}_{\vert y \vert > C} \right).
\end{eqnarray*}
This implies that 
\begin{eqnarray*}
\vert q_{x, j}(y)  - q_{x', j}(y)  \vert \le \Vert x - x' \Vert Q_j(y)
\end{eqnarray*}
where 
\begin{align*}
Q_j(y)  =    \tilde A_\alpha \left(\vert y \vert^{3(\alpha -1)} +\vert y \vert^{2(\alpha -1)} +\vert y \vert^{\alpha -1}  + \vert y\vert^{\alpha -2} \mathds{1}_{\alpha \ge 2} + \tilde B_\alpha \right) \times \left( \tilde M  \mathds{1}_{\vert y \vert \le C}  + \exp(\alpha d^{-\alpha} \tilde C \vert y \vert^{\alpha -1}) \mathds{1}_{\vert y \vert > C}\right)
\end{align*}
for some constants $\tilde A_\alpha, \tilde B_\alpha > 0$ and $\tilde M > 0$. Since $Q_j$ has a finite $L_2(\mathbb P^{\tilde Y})$-norm, we can use similar arguments as in the proof of consistency to show that 
\begin{eqnarray*}
\sup_{x \in \overline{\mathcal B}(0, B)} \left \vert \int q_{x, j}(y) d \mathbb G^{\tilde Y}_n(y) \right \vert  = O_{\mathbb P^{\tilde Y}}(1)
\end{eqnarray*}
for all $j \in \{1, \ldots, p \}$. This allows us to conclude that
$$
\sup_{x \in \overline{\mathcal B}(0, B)} \vert s_{n, j}(x) \vert = O_{\mathbb P^{\tilde Y}}(1). 
$$
Let $k = (k_1, \ldots, k_p) \in \mathbb N^p_0$ such that $k_. \le p$ and there exists at least one $j \in \{1, \ldots, p \}$ such that $k_j \ne 0$.  We are going to show that 
\begin{eqnarray}\label{Dksnj}
\sup_{x \in \overline{\mathcal B}(0, B)} \vert D^k s_{n, j}(x)  \vert  =  O_{\mathbb P^{\tilde Y}}(1).   
\end{eqnarray}
It is clear that
\begin{eqnarray*}
D^k s_{n, j}(x)  =   \int    D^k q_{x, j}(y) d\mathbb G^{\tilde Y}_n(y).  
\end{eqnarray*}
Using the calculations above we can write that
\begin{eqnarray*}
 D^k q_{x, j}(y) &= &   \frac{1}{\int f(y-\beta_0^\top z) d\mathbb P^{\tilde X}_n(z)} \bigg \{ x_j  (-1)^{k_.} (\prod_{l=1}^p \beta_{0l}^{k_l}) f^{(k_. + 1)}(y-\beta^\top_0 x) \\
&& \ +  \ \Big(  \  \Big( \sum_{l=1}^p k_l \mathds{1}_{j=l} \prod_{\substack{r=1 \\ r \ne l \\ }}^p\beta_{0r}\Big)  (-1)^{k_. -1}  (\prod_{l=1}^p \beta_{0l}^{k_l -1}  )  f^{(k_.)}(y-\beta^\top_0 x) \\ 
&& \ -  \  \frac{\int z_j f'(y- \beta_0^\top z) f^X(z) dz}{\int f(y- \beta_0^\top z) f^X(z) dz} (-1)^{k_.} (\prod_{l=1}^p \beta_{0l}^{k_l}) f^{(k_.)}(y-\beta^\top_0 x)\bigg \}.
\end{eqnarray*}
As done above for the case $k_j =0, 1 \le j \le p$, we will consider $y \mapsto D^k q_{x, j}(y)$ as a function which index by $x$. Next, we compute its gradient
\begin{eqnarray*}
\nabla [D^k q_{x, j}(y)] & =  & \frac{1}{\int f(y-\beta_0^\top z) d\mathbb P^{\tilde X}_n(z)} \bigg \{\mathbb I_j  (-1)^{k_.} (\prod_{l=1}^p \beta_{0l}^{k_l}) f^{(k_. + 1)}(y-\beta^\top_0 x) \\
&& \ +\  x_j \beta_0 (-1)^{k_. +1} (\prod_{l=1}^p \beta_{0l}^{k_l}) f^{(k_. + 2)}(y-\beta^\top_0 x)   \\
&&  \ + \ \beta_0 \Big( \sum_{l=1}^p k_l \mathds{1}_{j=l} \prod_{\substack{r=1 \\ r \ne l}}^p\beta_{0r}\Big)  (-1)^{k_.}  (\prod_{l=1}^p \beta_{0l}^{k_l -1}  )  f^{(k_.+1)}(y-\beta^\top_0 x) \\ 
&& \ +  \  \beta_0 \frac{\int z_j f'(y- \beta_0^\top z) f^X(z) dz}{\int f(y- \beta_0^\top z) f^X(z) dz} (-1)^{k_.} (\prod_{l=1}^p \beta_{0l}^{k_l}) f^{(k_.+1)}(y-\beta^\top_0 x)\bigg \}.     
\end{eqnarray*}
Using the bound in (\ref{boundderiv}), we can find a real constant $\tilde C_\alpha > 0$ depending on $\alpha, p, R^\ast, \beta_0$ and $B$ such that 
\begin{eqnarray*}
\Vert \nabla [D^k q_{x, j}(y)] \Vert  & \le  & \tilde C_\alpha \frac{f(y- \beta_0^\top x)}{\int f(y- \beta_0^\top z) d \mathbb P^{\tilde X}_n(z)}  \Bigg \{ \sum_{l=0}^{(k. +2)(\alpha -1)} \vert y - \beta_0^\top x \vert^{l } \\
&& \ +  \ \frac{\int \vert z_j \vert  \vert f'(y- \beta_0^\top z) \vert f^X(z) dz}{\int f(y- \beta_0^\top z) f^X(z) dz}  \sum_{l=0}^{(k. +1)(\alpha-1)} \vert y - \beta_0^\top x \vert^{l} \Bigg \}.
\end{eqnarray*}
By the inequalities in (\ref{ImpBlock}) and (\ref{ImpBlock2}) it follows that
\begin{eqnarray*}
\Vert \nabla [D^k q_{x, j}(y)] \Vert  & \le  & \tilde C_\alpha \left(M' \mathds{1}_{\vert y \vert \le C } +  \exp(\gamma_\alpha \vert y \vert^{\alpha -1} \mathds{1}_{\vert y \vert > C}  \right) \ \times \ \Bigg \{ \sum_{l=0}^{(k. +2)(\alpha-1)} \vert y - \beta_0^\top x \vert^{l} \\
&& \ + \  \left(\sum_{l=0}^{(k. +1)(\alpha-1)} \vert y - \beta_0^\top x \vert^{l}\right) (\vert y \vert^{2(\alpha -1)} +  C^{2(\alpha -1)}) \exp(-d^{-\alpha} 2^{1-\alpha} \vert y \vert^\alpha)    \Bigg \} \\
&&:= R_j(y).
\end{eqnarray*}
Note that the envelope $R_j$ has a finite $L_2(\mathbb P^{\tilde Y})$-norm. Hence, by similar arguments as above, we can show that (\ref{Dksnj}) holds true.  The same arguments allow us to show the stronger statement  
\begin{eqnarray*}
\Vert s_{n, j} \Vert_\gamma = O_{\mathbb P^{\tilde Y}}(1)
\end{eqnarray*}
for all $j \in \{1, \ldots, p \}$, with $\gamma = p+1$. This means that there exists a constant $D > 0$ such that for all $j \in \{1, \ldots, p \}$
\begin{eqnarray*}
x \mapsto s_{n, j}(x)  \in C^{p+1}(\overline{\mathcal B}(0, B))_D
\end{eqnarray*}
with large probability. We can conclude in the same way as done in (\ref{FinalRes}) that  $\int s_n(x) d\mathbb G^{\tilde X}_n(x) = O_{\mathbb P^{\tilde X} \otimes \mathbb P^{\tilde Y}}(1)$, and the theorem is proved.


\end{proof}

\bigskip
\bigskip

\begin{theorem}\label{Remainder2}
Let $\ddot{\ell}_{n,m}(\beta_0)$ be as in the proof of Theorem \ref{AsympNorm}. Then, under the assumptions (A0)-(A4), it holds that
\begin{eqnarray*}
\ddot{\ell}_{n,m}(\beta_0) \to_{\mathbb P \otimes \mathbb P^{\tilde X} \otimes \mathbb P^{\tilde Y}}  -\frac{1}{1 + \lambda} \Gamma_1 -  \frac{\lambda}{1 + \lambda} \Sigma_2
\end{eqnarray*}
where $\Gamma_1$ is as in (\ref{Gamma1}), and
$$
\Sigma_2 =  \left(\int \frac{(f'(t))^2}{f(t)} dt \right) \mathbb E[X X^\top].
$$

\end{theorem}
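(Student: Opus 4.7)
The plan is to decompose the Hessian into its matched and unmatched parts,
$$\ddot{\ell}_{n,m}(\beta_0) = \frac{1}{n+m}\sum_{k=1}^m H_k^{\text{mat}} + \frac{1}{n+m}\sum_{j=1}^n H_{j,n}^{\text{unm}},$$
and handle each separately. Direct differentiation of the matched log-density gives
$$H_k^{\text{mat}} = X_k X_k^\top\,\frac{f''(\varepsilon_k)f(\varepsilon_k) - (f'(\varepsilon_k))^2}{f(\varepsilon_k)^2}, \qquad \varepsilon_k = Y_k - \beta_0^\top X_k,$$
while the quotient rule applied to $\log\bigl(n^{-1}\sum_i f(\tilde Y_j-\beta^\top \tilde X_i)\bigr)$ yields $H_{j,n}^{\text{unm}} = A_{j,n} - B_{j,n}$, where
$$A_{j,n} = \frac{\int xx^\top f''(\tilde Y_j-\beta_0^\top x)\,d\mathbb P^{\tilde X}_n(x)}{\int f(\tilde Y_j-\beta_0^\top x)\,d\mathbb P^{\tilde X}_n(x)},$$
$$B_{j,n} = \frac{\left(\int x f'(\tilde Y_j-\beta_0^\top x)\,d\mathbb P^{\tilde X}_n(x)\right)\left(\int x f'(\tilde Y_j-\beta_0^\top x)\,d\mathbb P^{\tilde X}_n(x)\right)^\top}{\left(\int f(\tilde Y_j-\beta_0^\top x)\,d\mathbb P^{\tilde X}_n(x)\right)^2}.$$

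For the matched part, independence of $X$ and $\varepsilon$ together with the identity $\int f''(t)dt = 0$ (valid under (A0), (A4) because $f'$ decays at $\pm\infty$) gives
$$\mathbb{E}[H_k^{\text{mat}}] = \mathbb{E}[XX^\top]\left(\int f''(t)dt - \int\frac{(f'(t))^2}{f(t)}dt\right) = -\Sigma_2.$$
An envelope argument based on (A0) and (A3), analogous to the one used for the matched term in the proof of Theorem~\ref{Consis}, provides finite second moments, so the SLLN yields $m^{-1}\sum_k H_k^{\text{mat}}\to -\Sigma_2$ almost surely; the weight $m/(n+m)\to \lambda/(1+\lambda)$ then produces the second summand of the claimed limit.

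For the unmatched part, the strategy is to first replace every occurrence of $d\mathbb P^{\tilde X}_n$ by $d\mathbb P^{\tilde X}$ inside the ratios in $A_{j,n}$ and $B_{j,n}$, and to show that the substitution error vanishes after averaging over $j$. This reduces to controlling quantities of the same type as the $\Delta_n$ and $m_n$ in the proof of Theorem~\ref{Remainder}, using (i) the envelope $G_2$ from (\ref{G2}) to bound $f(y-\beta_0^\top x)/\int f(y-\beta_0^\top z)f^X(z)dz$, (ii) the polynomial-times-exponential derivative bound (\ref{boundderiv}) implied by (A4) to control $f'$ and $f''$, and (iii) Markov's inequality applied to the empirical process $\mathbb{G}^{\tilde X}_n$ acting on the resulting smooth integrands. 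Once the substitution is justified, $A_{j,n}$ and $B_{j,n}$ are replaced, up to $o_{\mathbb{P}^{\tilde X}\otimes\mathbb{P}^{\tilde Y}}(1)$, by deterministic functions $A(\tilde Y_j)$ and $B(\tilde Y_j)$ of $\tilde Y_j$ alone, and the SLLN takes over.

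The final step is purely computational: using Fubini and the change of variables $t = y-\beta_0^\top x$,
$$\int A(y)\,f^{\tilde Y}(y)\,dy = \iint xx^\top f''(y-\beta_0^\top x)f^X(x)\,dx\,dy = \mathbb{E}[XX^\top]\int f''(t)dt = 0,$$
while the identity $f^{\tilde Y}(y) = \int f(y-\beta_0^\top x)f^X(x)dx$ immediately gives $\int B(y)f^{\tilde Y}(y)dy = \Gamma_1$. Hence $n^{-1}\sum_j H_{j,n}^{\text{unm}}\to -\Gamma_1$ in probability, and the weight $n/(n+m)\to 1/(1+\lambda)$ yields the first summand. The main obstacle is executing the $d\mathbb P^{\tilde X}_n\to d\mathbb P^{\tilde X}$ substitution uniformly in $j$ when the denominators can be small for extreme $\tilde Y_j$; fortunately, precisely this difficulty was already overcome in Theorem~\ref{Remainder}, and the same envelope machinery can be recycled here with minor modifications.
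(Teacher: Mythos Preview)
Your proposal is correct and follows essentially the same route as the paper: the decomposition into matched and unmatched Hessian contributions, the SLLN treatment of the matched part via $\int f''=0$, and the handling of the unmatched ratios by first replacing $d\mathbb P^{\tilde X}_n$ with $d\mathbb P^{\tilde X}$ (justified by recycling the empirical-process bounds from Theorem~\ref{Remainder}) before applying the SLLN to obtain the limits $0$ and $\Gamma_1$ are exactly what the paper does. The only cosmetic difference is that the paper writes out the substitution-error kernels $r_n$, $\tilde r_n$, $u_n$ explicitly, whereas you describe the step in words; the underlying argument is the same.
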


\begin{proof}
We have that
\begin{eqnarray*}
\ddot{\ell}_{n,m}(\beta_0)  = I_n + II_n    
\end{eqnarray*}
where
\begin{eqnarray*}
I_n & = &   \frac{1}{n+m} \sum_{j=1}^n \frac{\frac{1}{n}\sum_{i=1}^n f''(\tilde{Y}_j-\beta_0^\top \tilde{X}_i)(\tilde{X}_i \tilde{X}_i^\top)}{\frac{1}{n}\sum_{i=1}^n f(\tilde{Y}_j-\beta_0^\top \tilde{X}_i)} \\
&& \ \ -   \frac{1}{n+m} \sum_{j=1}^n\frac{\left(\frac{1}{n}\sum_{i=1}^n f'(\tilde{Y}_j-\beta_0^\top \tilde{X}_i)(\tilde{X}_i)\right)\left(\frac{1}{n}\sum_{i=1}^n f'(\tilde{Y}_j-\beta_0^\top \tilde{X}_i)(\tilde{X}_i^\top)\right)}{\left(\frac{1}{n}\sum_{i=1}^n f(\tilde{Y}_j-\beta_0^\top \tilde{X}_i)\right)^2}  \\
& = &  \frac{n}{n+m} \frac{1}{n}\sum_{j=1}^n \frac{\int  x x^\top  f''(\tilde{Y}_j  - \beta_0^\top x) d\mathbb P^{\tilde X}_n(x)} {\int  f(\tilde{Y}_j  - \beta_0^\top x) d\mathbb P^{\tilde X}_n(x)} \\
&& \ \ - \frac{n}{n+m} \frac{1}{n} \sum_{j=1}^n  \left(\frac{\int x f'(\tilde{Y}_j  - \beta_0^\top x) d\mathbb P^{\tilde X}_n(x)} {\int  f(\tilde{Y}_j  - \beta_0^\top x) d\mathbb P^{\tilde X}_n(x)} \right) \left(\frac{\int x^\top f'(\tilde{Y}_j  - \beta_0^\top x) d\mathbb P^{\tilde X}_n(x)} {\int  f(\tilde{Y}_j  - \beta_0^\top x) d\mathbb P^{\tilde X}_n(x)} \right) 
\end{eqnarray*}
and 
\begin{eqnarray*}
II_n & = &  \frac{1}{n+m}  \sum_{k=1}^m \frac{f''(Y_k-\beta_0^\top X_k)(X_k X_k^\top)}{f(Y_k - \beta_0^\top X_k)}  - \frac{1}{n+m} \sum_{k=1}^m \frac{\left(f'(Y_k-\beta_0^\top X_k)\right)^2(X_k X_k^\top)}{f(Y_k - \beta_0^\top X_k)^2} \\
& = &  \frac{m}{n+m} \int xx^\top \left(\frac{f''(y-\beta_0^\top x)}{f(y-\beta_0^\top x)} - \left(\frac{f'(y-\beta_0^\top x)}{f(y-\beta_0^\top x )}\right)^2 \right) d\mathbb P_m(x, y). 
\end{eqnarray*}
We start with $II_n$. We have that  
\begin{eqnarray*}
\int xx^\top \frac{f''(y-\beta_0^\top x)}{f(y-\beta_0^\top x)} d\mathbb P(x, y) & = &   \int xx^\top f''(y-\beta_0^\top x) f^X(x) dx dy \\
& = &  \mathbb E[XX^\top] \int f''(t) dt  =0
\end{eqnarray*}
using the change of variable $t = y - \beta_0^\top x$. Also, we have already established above that 
\begin{eqnarray*}
\int xx^\top \frac{(f'(y-\beta_0^\top x))^2}{f(y-\beta_0^\top x)} f^X(x) dx dy = \mathbb E[XX^\top]  \int \frac{(f'(t))^2}{f(t)} dt  = \Sigma_2
\end{eqnarray*}
it follows by the SLLN that $II_n \to_{\mathbb P}  -(\lambda/(1+\lambda)) \Sigma_2$.
Next, we will show that $I_n \to_{\mathbb P^{\tilde X} \otimes \mathbb P^{\tilde Y}}  -(1+ \lambda)^{-1} \Gamma_1$. To this aim, it is enough to show that
\begin{eqnarray}\label{FormerConv}
\frac{1}{n}\sum_{j=1}^n \frac{\int  x x^\top  f''(\tilde{Y}_j  - \beta_0^\top x) d\mathbb P^{\tilde X}_n(x)} {\int  f(\tilde{Y}_j  - \beta_0^\top x) d\mathbb P^{\tilde X}_n(x)} \to_{\mathbb P^{\tilde X} \otimes \mathbb P^{\tilde Y}} 0
\end{eqnarray}
and 
\begin{eqnarray}\label{LatterConv}
\frac{1}{n} \sum_{j=1}^n  \left(\frac{\int x f'(\tilde{Y}_j  - \beta_0^\top x) d\mathbb P^{\tilde X}_n(x)} {\int  f(\tilde{Y}_j  - \beta_0^\top x) d\mathbb P^{\tilde X}_n(x)} \right) \left(\frac{\int x^\top f'(\tilde{Y}_j  - \beta_0^\top x) d\mathbb P^{\tilde X}_n(x)} {\int  f(\tilde{Y}_j  - \beta_0^\top x) d\mathbb P^{\tilde X}_n(x)} \right)\to_{\mathbb P^{\tilde X} \otimes \mathbb P^{\tilde Y}} \Gamma_1.
\end{eqnarray}
We start with showing the convergence in (\ref{FormerConv}). As done in the proof of Theorem \ref{Remainder} we can write that
\begin{eqnarray*}
\frac{1}{n}\sum_{j=1}^n \frac{\int  x x^\top  f''(\tilde{Y}_j  - \beta_0^\top x) d\mathbb P^{\tilde X}_n(x)} {\int  f(\tilde{Y}_j  - \beta_0^\top x) d\mathbb P^{\tilde X}_n(x)} & = &  \frac{1}{n}\sum_{j=1}^n \frac{\int  x x^\top  f''(\tilde{Y}_j  - \beta_0^\top x) d\mathbb P^{\tilde X}(x)} {\int  f(\tilde{Y}_j  - \beta_0^\top x) d\mathbb P^{\tilde X}(x)}  \\
&&  \ \ + \  \frac{1}{\sqrt n} \int \left( \tilde r_n(x, y) f^Y(y) dy \right) d\mathbb G^{\tilde X}_n(x)\\
&&  \ \ + \  \frac{1}{\sqrt n} \int \left[\left( r_n(x, y) - \tilde r_n(x, y) \right)f^Y(y) dy\right] d\mathbb G^{\tilde X}_n(x) \\
&& \ \ + \ \frac{1}{n} \int \int r_n(x, y) d\mathbb G^{\tilde X}_n(x) d\mathbb G^{\tilde Y}_n(y) 
\end{eqnarray*}
with 
\begin{eqnarray*}
r_n(x, y)  =    \frac{1}{\int f(y- \beta^\top_0 z) d\mathbb P^{\tilde X}_n(z)}  \left( xx^\top f''(y- \beta^\top_0 x) -  \frac{\int zz^\top f''(y- \beta^\top_0 z) d \mathbb P^{\tilde X}(z)}{\int  f(y- \beta^\top_0 z) d \mathbb P^{\tilde X}(z)} f(y- \beta^\top_0 x)  \right).   
\end{eqnarray*}
and 
\begin{align*}
\tilde r_n(x, y)  =    \frac{1}{\int f(y- \beta^\top_0 z) d\mathbb P^{\tilde X}(z)}  \left( xx^\top f''(y- \beta^\top_0 x) -  \frac{\int zz^\top f''(y- \beta^\top_0 z) d \mathbb P^{\tilde X}(z)}{\int  f(y- \beta^\top_0 z) d \mathbb P^{\tilde X}(z)} f(y- \beta^\top_0 x)  \right).
\end{align*}
Using similar techniques as in the proof of Theorem \ref{Remainder}, we can show that 
$$
\sqrt{n}\int \left[ \left( r_n(x, y) - \tilde r_n(x, y) \right) f^Y(y) dy \right] d\mathbb G^{\tilde X}_n(x) = O_{\mathbb P^{\tilde X}}(1)
$$
and 
$$
 \int \int r_n(x, y) d\mathbb G^{\tilde X}_n(x) d\mathbb G^{\tilde Y}_n(y) 
 = O_{\mathbb P^{\tilde X} \otimes \mathbb P^{\tilde Y}}(1)
$$
and by SLLN we also have
$$
\frac{1}{\sqrt n} \int \left( \tilde r_n(x, y) f^Y(y) dy \right) d\mathbb G^{\tilde X}_n(x) = o_{\mathbb P^{\tilde X}}(1).
$$
Furthermore, 
\begin{eqnarray*}
\int  \frac{\int  x x^\top  f''(y  - \beta_0^\top x) d\mathbb P^{\tilde X}(x)} {\int  f(y - \beta_0^\top x) d\mathbb P^{\tilde X}(x)}  d\mathbb P^{\tilde Y}(y) =  \int x x^\top  f''(y  - \beta_0^\top x) d\mathbb P^{\tilde X}(x) dy  = \mathbb E[X X^\top]  \int f''(t) dt = 0
\end{eqnarray*}
using the change of variable $t = y - \beta_0^\top$. By the SLLN, it follows that 
$$
\frac{1}{n}\sum_{j=1}^n \frac{\int  x x^\top  f''(\tilde{Y}_j  - \beta_0^\top x) d\mathbb P^{\tilde X}(x)} {\int  f(\tilde{Y}_j  - \beta_0^\top x) d\mathbb P^{\tilde X}(x)}  \to_{\mathbb P^{\tilde Y}}  0
$$
and the convergence in (\ref{FormerConv}) is proved.  Now, we show the convergence in (\ref{LatterConv}). We write that
\begin{eqnarray*}
 && \frac{1}{n} \sum_{j=1}^n  \left(\frac{\int x f'(\tilde{Y}_j  - \beta_0^\top x) d\mathbb P^{\tilde X}_n(x)} {\int  f(\tilde{Y}_j  - \beta_0^\top x) d\mathbb P^{\tilde X}_n(x)} \right) \left(\frac{\int x^\top f'(\tilde{Y}_j  - \beta_0^\top x) d\mathbb P^{\tilde X}_n(x)} {\int  f(\tilde{Y}_j  - \beta_0^\top x) d\mathbb P^{\tilde X}_n(x)} \right)  \\
 && =  \frac{1}{n} \sum_{j=1}^n  \left(\frac{\int x f'(\tilde{Y}_j  - \beta_0^\top x) d\mathbb P^{\tilde X}(x)} {\int  f(\tilde{Y}_j  - \beta_0^\top x) d\mathbb P^{\tilde X}(x)} \right) \left(\frac{\int x^\top f'(\tilde{Y}_j  - \beta_0^\top x) d\mathbb P^{\tilde X}(x)} {\int  f(\tilde{Y}_j  - \beta_0^\top x) d\mathbb P^{\tilde X}(x)} \right) \\
 &&  \  \  +  \  \frac{1}{\sqrt n} \int \left(\int u_n(x, y) f^Y(y) dy \right)  d\mathbb G^{\tilde X}_n(x) \\
 && \ \  +   \  \frac{1}{n} \int \int u_n(x, y)d\mathbb G^{\tilde X}_n(x) d\mathbb G^{\tilde Y}_n(y)
 \end{eqnarray*}
where 
\begin{eqnarray*}
&& u_n(x, y) \\
&& =   \frac{1}{\left(\int  f(y  - \beta_0^\top z) d\mathbb P^{\tilde X}_n(z) \right)^2} \ \bigg \{    \left(\int z f'(y-\beta_0^\top z) d\mathbb P^{\tilde X}(z) \right) x^\top f'(y-\beta_0^\top x)  \\
&& \ \ +  \ x f'(y-\beta_0^\top x) \left(\int z^\top f'(y-\beta_0^\top z) d\mathbb P^{\tilde X}(z)\right) \\
&& \ \  - \ 2 \  f(y- \beta_0^\top x) \frac{\int z f'(y-\beta_0^\top z) d\mathbb P^{\tilde X}(z) \int z^\top f'(y-\beta_0^\top z) d\mathbb P^{\tilde X}(z)}{f^Y(y)} \\
&& \ \  + \  \frac{1}{\sqrt n}  x f'(y-\beta_0^\top x) \left(\int z^\top f'(y-\beta_0^\top z) d\mathbb G^{\tilde X}_n(z)\right) \\
&& \ \ -  \frac{1}{\sqrt n} f(y- \beta_0^\top x) \frac{\int f(y-\beta_0^\top z) d\mathbb G^{\tilde X}_n(z) \int z f'(y-\beta_0^\top z) d\mathbb P^{\tilde X}(z) \int z^\top f'(y-\beta_0^\top z) d\mathbb P^{\tilde X}(z)}{(f^Y(y))^2} \bigg \}.
\end{eqnarray*}
Note that $u_n(x, y)$ is a matrix of dimension $p \times p$. Thus, similar empirical process arguments involving H\"older classes should be now be applied to each entry $(i, j) \in \{1, \ldots, p \} \times \{1, \ldots, p \}$. Although the calculations are a bit more complex, a formal proof is omitted as main idea remains the same. Now, we can apply the SLLN to conclude that 
\begin{eqnarray*}
&& \frac{1}{n} \sum_{j=1}^n  \left(\frac{\int x f'(\tilde{Y}_j  - \beta_0^\top x) d\mathbb P^{\tilde X}(x)} {\int  f(\tilde{Y}_j  - \beta_0^\top x) d\mathbb P^{\tilde X}(x)} \right) \left(\frac{\int x^\top f'(\tilde{Y}_j  - \beta_0^\top x) d\mathbb P^{\tilde X}(x)} {\int  f(\tilde{Y}_j  - \beta_0^\top x) d\mathbb P^{\tilde X}(x)} \right)  \\
&& \to_{\mathbb P^{\tilde Y}} \mathbb E_{\tilde Y} \left[ \frac{\left(\int x f'(\tilde{Y}  - \beta_0^\top x) d\mathbb P^{\tilde X}(x)\right) \left(\int x^\top f'(\tilde{Y}  - \beta_0^\top x) d\mathbb P^{\tilde X}(x) \right)}{\left(\int  f(\tilde{Y}  - \beta_0^\top x) d\mathbb P^{\tilde X}(x)\right)^2}\right] \\
&&  =  \int  \frac{\left(\int x f'(y- \beta^\top_0 x) f^X(x) dx \right)\left(\int x^\top f'(y- \beta^\top_0 x) f^X(x) dx \right) }{f^Y(y) } dy =\Gamma_1  
\end{eqnarray*}
and the proof is complete.

\end{proof}

\bibliographystyle{plainnat}
\bibliography{unlinkedreg2}

\end{document}